\theoremstyle{plain}    
\newtheorem{theorem}[subsection]{Theorem}
\newtheorem{lemma}[subsection]{Lemma}
\newtheorem{proposition}[subsection]{Proposition}
\newtheorem{corollary}[subsection]{Corollary}
\theoremstyle{definition}   
\newtheorem{definition}[subsection]{Definition}
\theoremstyle{remark}   
\newtheorem{remark}[subsection]{Remark}
\newcommand{\Int}{\mathbb{Z}}   
\newcommand{\Q}{\mathbb{Q}}     
\newcommand{\Derived}{\mathbf{D}}   
\newcommand{\modules}{\textendash\mathrm{mod}} 
\newcommand{\Hom}{\text{Hom}}   
\newcommand{\ext}{\text{Ext}}   
\newcommand{\tor}{\text{Tor}}   
\newcommand{\coker}{\mathrm{coker}} 
\newcommand{\cone}{\mathbf{C}}      
\newcommand{\chains}{\mathrm{C}}    
\newcommand{\holim}{\mathrm{holim}} 
\newcommand{\colim}{\mathrm{colim}} 
\newcommand{\op}{\mathrm{op}}       
\newcommand{\spec}{\mathrm{Spec}}   
\newcommand{\supp}{\mathrm{supp}}   
\newcommand{\shr}{\chi}             
\newcommand{\aug}{\mathfrak{a}}     
\newcommand{\ba}{\boxtimes}         
\newcommand{\udl}{\natural}         
\newcommand{\A}{\mathcal{A}}
\newcommand{\F}{\mathcal{F}}
\newcommand{\G}{\mathcal{G}}
\newcommand{\uu}{\mathcal{U}}       
\newcommand{\vv}{\mathcal{V}}       
\newcommand{\pp}{\mathfrak{p}}      
\newcommand{\qq}{\mathfrak{q}}      
\newcommand{\cl}{\mathrm{cl}}      
\newcounter{commentcount}[section]
\begin{document}

\title{A spectral sequence for the Hochschild cohomology of a coconnective dga}
\author{S. Shamir}
\address{Department of Mathematics, University of Bergen, 5008 Bergen, Norway}
\email{shoham.shamir@math.uib.no}
\date{\today}
\thanks{The author was supported by the RCN ``Topology'' project.}
\keywords{Hochschild cohomology, Noetherian, elliptic spaces, support varieties}
\subjclass[2010]{18G40; 55U15}

\begin{abstract}
A spectral sequence for the computation of the Hochschild cohomology of a coconnective dga over a field is presented. This spectral sequence has a similar flavour to the spectral sequence presented in \cite{CohenJonesYan} for the computation of the loop homology of a closed orientable manifold. Using this spectral sequence we identify a class of spaces for which the Hochschild cohomology of their mod-$p$ cochain algebra is Noetherian. This implies, among other things, that for such a space the derived category of mod-$p$ chains on its loop-space carries a theory of support varieties.
\end{abstract}

\maketitle                 

\section{Introduction}

Given a differential graded algebra (a \emph{dga}) $A$ over a field $k$, the Hochschild cohomology of $A$, with coefficients in the differential graded $A$-bimodule $A$, is
\[ HH^*(A) = \ext^*_{A \otimes_k A^\op} (A,A) . \]
For an augmented $k$-dga $A$ there is a well known map of graded algebras $\shr: HH^*(A) \to \ext^*_A(k,k)$, which we shall call the \emph{shearing map}. Several equivalent definitions of the shearing map are given in Section~\ref{sec: The Hochschild cohomology shearing map}. The main result of this paper, Theorem~\ref{thm: Main theorem}, describes a multiplicative spectral sequence for the computation of $HH^*(A)$, which also computes the image of the shearing map. The existence of such a spectral sequence is hardly surprising, and so we must explain why the consequences are interesting.

The main consequence of this spectral sequence is Theorem~\ref{thm: Elliptic mod p dgas}, which gives conditions under which $HH^*(A)$ is Noetherian and $\ext^*_A(k,k)$ is finitely generated over the image of $\shr$. Our interest in this result comes from its application to topological spaces. Given a simply connected space $X$, we consider its cochains dga $\chains^*(X;k)$, i.e. the singular cochains on $X$ with coefficients in $k$. There is a particular class of spaces for whose cochain dga Theorem~\ref{thm: Elliptic mod p dgas} holds.

\begin{definition}
A space $X$ is called \emph{$p$-finite}, for a prime $p$, if $H_*(X;\Int/p)$ is finite dimensional. Recall from~\cite{FelixHalperinThomasEliiptic1} that a simply connected space $X$ is called \emph{$p$-elliptic} if it is of finite type and the sequence $\{\dim H_n(\Omega X;\Int/p)\}_n$ grows at most at a polynomial rate. Compact Lie groups and homogeneous spaces are noteworthy examples of elliptic spaces.
\end{definition}

For simplicity of exposition we shall now fix a prime $p$ and let $k=\Int/p$. Recall, that for any $p$-finite simply connected space $X$ there is a well known isomorphism of graded algebras $H_*(\Omega X;k) \cong \ext^{-*}_{\chains^*(X;k)}(k,k)$. Theorem~\ref{thm: Elliptic mod p dgas} implies the following result:

\begin{corollary}
\label{cor: p-finite p-elliptic spaces}
Let $X$ be a $p$-finite $p$-elliptic space. Then $HH^*(\chains^*(X;k))$ is Noetherian and the shearing map makes $H_*(\Omega X;k)$ into a finitely generated $HH^*(\chains^*(X;k))$-module.
\end{corollary}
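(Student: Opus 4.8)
The plan is to obtain this as a direct application of Theorem~\ref{thm: Elliptic mod p dgas} to the cochain algebra $A = \chains^*(X;k)$. First I would check that $A$ is an augmented coconnective $k$-dga of the kind to which that theorem applies. Since $X$ is simply connected and of finite type, $H^*(A) \cong H^*(X;k)$ is concentrated in non-negative degrees with $H^0(A) = k$ and $H^1(A) = 0$; thus (after replacing $A$ by a quasi-isomorphic augmented model if necessary, using the basepoint of $X$ to define the augmentation $A \to k$) $A$ is an augmented, connected, coconnective dga. The hypothesis that $X$ is $p$-finite translates into the statement that $H^*(A)$ is finite dimensional over $k$, which supplies the finiteness input required by Theorem~\ref{thm: Elliptic mod p dgas}.

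Next I would identify the ellipticity hypothesis on the algebraic side. Using the isomorphism of graded algebras $H_*(\Omega X;k) \cong \ext^{-*}_A(k,k)$, the condition defining $p$-ellipticity — that $\{\dim_k H_n(\Omega X;k)\}_n$ grows at most polynomially — is exactly the statement that $\ext^*_A(k,k)$ has at most polynomial growth. So $A$ is an elliptic coconnective dga in the sense demanded, and the remaining hypotheses of Theorem~\ref{thm: Elliptic mod p dgas} are in place. Applying that theorem (which itself rests on the spectral sequence of Theorem~\ref{thm: Main theorem}) yields that $HH^*(A) = HH^*(\chains^*(X;k))$ is Noetherian and that $\ext^*_A(k,k)$ is finitely generated as a module over the image of the shearing map $\shr \colon HH^*(A) \to \ext^*_A(k,k)$, hence finitely generated over $HH^*(A)$ itself. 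Transporting this module structure along the algebra isomorphism $\ext^{-*}_A(k,k) \cong H_*(\Omega X;k)$ — which is legitimate since $\shr$ is a homomorphism of graded algebras, the isomorphism is natural, and finite generation is unaffected by regrading on the module side — gives precisely the asserted conclusion.

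The only step requiring genuine care, and the main obstacle, is the first one: pinning down that the singular cochain algebra really fits the framework of Theorem~\ref{thm: Elliptic mod p dgas} and that the topological notion of $p$-ellipticity matches on the nose the algebraic growth condition used there. Once this dictionary and the identification of module structures induced by $\shr$ are established, the corollary follows formally.
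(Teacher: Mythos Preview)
Your overall strategy is the same as the paper's: reduce to Theorem~\ref{thm: Elliptic mod p dgas} applied to $A=\chains^*(X;k)$. The verification that $A$ is simply connected and that $H^*(A)$ is finite dimensional is fine. The gap is in your treatment of the third hypothesis of Theorem~\ref{thm: Elliptic mod p dgas}.

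That theorem does \emph{not} assume merely that $\ext^*_A(k,k)$ has polynomial growth; it assumes that $\ext^*_A(k,k)$ is finitely generated as a module over a central Noetherian sub-algebra. You have only checked the growth condition, which is the \emph{definition} of $p$-ellipticity, and then asserted that ``the remaining hypotheses of Theorem~\ref{thm: Elliptic mod p dgas} are in place.'' They are not: polynomial growth of a graded algebra does not by itself force the existence of a central Noetherian sub-algebra over which it is module-finite. Passing from the growth condition to this structural statement about $H_*(\Omega X;k)$ is a substantial theorem of F\'elix, Halperin and Thomas (see \cite{FelixHalperinThomasHopfAlgebraElliptic, FelixHalperinThomasEliiptic1}), and it is precisely what the paper invokes at this point. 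Without it, your argument does not reach the hypotheses of Theorem~\ref{thm: Elliptic mod p dgas}, and the proof does not close. Your final paragraph flags ``matching the topological notion of $p$-ellipticity to the algebraic condition'' as the delicate step, but you have mis-identified the target algebraic condition; once you state it correctly, you will see that the match is not formal and requires the cited external input.
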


Note that, in the rational case, it is very easy to construct a $\Q$-finite $\Q$-elliptic space $X$ where $H_*(\Omega X;\Q)$ is not finitely generated over the image of the shearing map, see \cite[Example 8.5]{GreenleesHessShamir}.

We are interested in spaces with Noetherian Hochschild cohomology for two reasons, one coming from commutative algebra and the other coming from non-commutative algebra. In commutative algebra, the obvious analogue to $p$-elliptic spaces are local complete intersection algebras. Among Noetherian commutative local algebras the \emph{complete intersection (ci)} local algebras are characterized as those algebras $R$ for which the sequence $\{\dim \ext_R^n(k,k)\}_n$ has polynomial growth, where $k$ is the residue field of $R$. The analogy between ci local algebras and cochains on $p$-finite $p$-elliptic spaces is strengthened by the fact that finite dimensional ci local algebras have Noetherian Hochschild cohomology. As far as the author is aware, it is still an open question whether having Noetherian Hochschild cohomology characterizes such ci local algebras.

This analogy between cochains on $p$-elliptic spaces and ci local algebras is the main focus of~\cite{BensonGreenleesShamir}. There, the analogy is explained in detail, and other properties of ci local algebras are shown to hold for cochains on $p$-elliptic spaces. We shall not say more about this except to note that Corollary~\ref{cor: p-finite p-elliptic spaces} is central to the arguments in~\cite{BensonGreenleesShamir}.

Recent work in non-commutative algebra motivates the use of Hochschild cohomology for development of support varieties. Given a non-commutative algebra $\Lambda$, one would like a notion of support for $\Lambda$-modules, similar to that of modules over a commutative ring. It has been observed that the Hochschild cohomology of $\Lambda$ can be used for that purpose, see~\cite{Solberg} for a survey on this subject. Such a theory of support is easiest to construct when the Hochschild cohomology is Noetherian. But it is difficult to determine when this condition holds; for example, in~\cite{Xu} Xu constructs a 7-dimensional algebra whose Hochschild cohomology ring is not Noetherian, even modulo nilpotents.

On the topology side, we consider the dga $\chains_*(\Omega X;k)$ where $\Omega X$ is the loop-group of $X$. The derived category of $\chains_*(\Omega X;k)$ contains much information about the space $X$ (see~\cite{BlumbergCohenTeleman} for a recent motivating example). For a $p$-finite $p$-elliptic space we shall see that the Hochschild cohomology of $\chains_*(\Omega X;k)$ is Noetherian. Once this has been established, modern technology developed by Benson, Iyengar and Krause~\cite{BIKsupport}, immediately gives the construction of support varieties for $\chains_*(\Omega X;k)$-modules, where the support of a module is a subset of the prime ideal spectrum of the Hochschild cohomology. For completeness, the definition and main properties of this theory of support is described in~\ref{sub: Support varieties for elliptic spaces} below.

We should note that our ultimate goal is to identify cases in which this notion of support classifies all localizing subcategories of the derived category $\Derived(\chains_*(\Omega X;k))$. In such cases, one says $HH^*(\chains^*(X;k))$ \emph{stratifies} $\Derived(\chains_*(\Omega X;k))$; this definition is also due to Benson, Iyengar and Krause~\cite{BIKstratifying}. Corollary~\ref{cor: p-finite p-elliptic spaces} can be viewed as a first step in this direction.

In order to describe the spectral sequence we must first give a few definitions. A differential graded algebra $A$ over a field $k$ will be called \emph{coconnective} if $A^n=0$ for $n<0$ and $H^0(A)=k$. It is of \emph{finite type} if $H^n(A)$ is a finite dimensional $k$-vector space for every $n$ and it is \emph{bounded} if $H^n(A)\neq 0$ only for a finite number of values of $n$. We say $A$ is \emph{simply connected} if $A$ is coconnective, augmented and $\ext^0_A(k,k)=k$ (equivalently $\tor^A_0(k,k)=k$). The dga $A \otimes_k A^\op$ will be denoted by $A^e$. Note there is a well known isomorphism $\ext^*_A(k,k) \cong \ext_{A^e}^*(A,k)$, which will be described in Section~\ref{sec: The Hochschild cohomology shearing map}.

Since we are computing the Hochschild cohomology of a coconnective dga, we are bound to end up with elements in both positive and negative degrees. We keep to the convention that subscript grading denotes homological degree, while superscript grading is cohomological, thus $X_\square = X^{-\square}$ for any graded object $X$. See also~\ref{sub: Notation and terminology} below.

\begin{theorem}
\label{thm: Main theorem}
Let $A$ be a simply connected dga over a field $k$. Then there exists a conditionally convergent multiplicative spectral sequence
\[ E^2_{p,q} =\ext^{-q}_{A^e}(A,H^{-p}(A)) \ \Longrightarrow \ HH^{-p-q}(A) . \]
Under the isomorphism $\ext^*_A(k,k) \cong \ext_{A^e}^*(A,k)$ the infinite cycles in $E^2_{0,*}$ can be identified with the image of the shearing map $\shr: HH^*(A) \to \ext^*_A(k,k)$. When $A$ is bounded the spectral sequence has strong convergence. When $A$ is of finite type then this spectral sequence takes the form
\[ E^2_{p,q} = H^{-p}(A) \otimes_k \ext_A^{-q}(k,k) \ \Longrightarrow \ HH^{-p-q}(A) , \]
with the obvious multiplicative structure on $H^{-p}(A) \otimes_k \ext_A^{-q}(k,k)$.
\end{theorem}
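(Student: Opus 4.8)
The plan is to build the spectral sequence from the Postnikov tower of $A$, placed in the coefficient slot of the Hochschild complex. Since $A$ is coconnective over a field, it has a Postnikov tower of augmented dgas (one construction: take the minimal $A_\infty$-model $m$ of $A$ with underlying graded space $H^*(A)$; since $H^1(A)=0$, which is equivalent to $\ext^0_A(k,k)=k$, a degree count shows that $H^{>n}(A)$ is an $A_\infty$-ideal of $m$, so $\tau^{\leq n}A:=H^{\leq n}(A)$ does the job)
\[ A\;\longrightarrow\;\cdots\;\longrightarrow\;\tau^{\leq n}A\;\longrightarrow\;\tau^{\leq n-1}A\;\longrightarrow\;\cdots\;\longrightarrow\;\tau^{\leq 0}A=k, \]
in which the fibre of $\tau^{\leq n}A\to\tau^{\leq n-1}A$ is the trivial bimodule $H^n(A)$ placed in cohomological degree $n$, and the canonical map $A\to\holim_n\tau^{\leq n}A$ is an equivalence (in each cohomological degree the tower is eventually constant). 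Applying $R\Hom_{A^e}(A,-)$, which commutes with homotopy limits, gives
\[ HH^*(A)=H^*R\Hom_{A^e}(A,A)=H^*\holim_n R\Hom_{A^e}(A,\tau^{\leq n}A); \]
realising this homotopy limit as an inverse limit of surjections of dgas produces a complete, exhaustive, Hausdorff filtration $F^\bullet$ of a model for $R\Hom_{A^e}(A,A)$ that is multiplicative because the Postnikov tower is, hence a multiplicative spectral sequence.

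The next step is to identify the pages. The $n$-th layer of $F^\bullet$ is $R\Hom_{A^e}(A,-)$ applied to the trivial bimodule $H^n(A)$ in cohomological degree $n$ --- a bimodule concentrated in a single degree over the coconnective algebra $A^e$ (which has $H^0=k$) is automatically the corresponding trivial bimodule --- so the layer has cohomology $\ext^{*-n}_{A^e}(A,H^n(A))$. Reindexing by $p=-n$ gives the asserted $E^2$-term $\ext^{-q}_{A^e}(A,H^{-p}(A))\Rightarrow HH^{-p-q}(A)$. Conditional convergence follows from Boardman's criterion, since $F^\bullet$ is complete and Hausdorff; when $A$ is bounded one has $H^n(A)=0$ for $n\gg0$, so $\tau^{\leq n}A\simeq A$ for $n\gg0$, the filtration is finite, and convergence is strong. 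When $A$ is of finite type each $H^n(A)$ is a finite-dimensional trivial bimodule, whence $\ext^*_{A^e}(A,H^n(A))\cong\ext^*_{A^e}(A,k)\otimes_k H^n(A)$; together with the isomorphism $\ext^*_{A^e}(A,k)\cong\ext^*_A(k,k)$ recalled in Section~\ref{sec: The Hochschild cohomology shearing map}, this puts the $E^2$-page in the form $H^{-p}(A)\otimes_k\ext^{-q}_A(k,k)$.

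For the shearing map, the bottom of the tower is $\tau^{\leq 0}A=k$, so the edge homomorphism is the map
\[ HH^*(A)=H^*R\Hom_{A^e}(A,A)\longrightarrow H^*R\Hom_{A^e}(A,k)=\ext^*_{A^e}(A,k)\cong\ext^*_A(k,k) \]
induced by the augmentation, which by one of the equivalent descriptions of $\shr$ from Section~\ref{sec: The Hochschild cohomology shearing map} is exactly the shearing map. The columns of the spectral sequence lie in $p\leq0$, and the only differential that could hit $E^r_{0,*}$ would come from $E^r_{r,*}$, which vanishes since $H^{-r}(A)=0$ for $r\geq1$; hence the infinite cycles in $E^2_{0,*}$ form $E^\infty_{0,*}$, which is the top quotient $F^0/F^1$ of the filtration on $HH^*(A)$, i.e. the image of the edge homomorphism --- so these infinite cycles are precisely the image of $\shr$.

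The step I expect to be the main obstacle is the multiplicative structure. Even with the Postnikov tower made explicit via the minimal model, one must (i) transport it through the Hochschild complex so as to get an honest filtered \emph{dga} model for $R\Hom_{A^e}(A,A)$, ensuring that the resulting spectral sequence is one of algebras with differentials acting as derivations, and (ii) check, in the finite-type case, that the induced product on $E^2$ is the ``obvious'' one --- the graded ring structure of $H^*(A)$ tensored with the cup-product ring structure of $\ext^*_A(k,k)$ --- which comes down to comparing the cup product on Hochschild cochains with the multiplicativity of the Postnikov filtration and of the isomorphism $\ext^*_{A^e}(A,k)\cong\ext^*_A(k,k)$. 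The homotopy-limit bookkeeping, the convergence claims, and the identification of the edge map with $\shr$ should be routine given the earlier sections; it is producing a strictly filtered, multiplicative model that requires the real care.
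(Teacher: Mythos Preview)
Your outline is sound and proves the theorem, but by a genuinely different route from the paper's. The paper does \emph{not} use the Postnikov tower: it filters $A$ as an $A^e$-module by the brutal truncations $J(n)=A^{\geq n}$, which are honest sub-bimodules satisfying $J(n)\cdot J(m)\subset J(n+m)$ on the nose. This gives an $E^1$-page $\ext^{-q}_{A^e}(A,A^{-p})$, and the simply-connected hypothesis is then invoked separately (via the fact that a bimodule concentrated in one degree is trivial) to pass to $E^2$. Multiplicativity is established by hand, building an explicit pairing on the bar construction and verifying Massey's criterion. Your Postnikov approach lands on the page $\ext^{-q}_{A^e}(A,H^{-p}(A))$ immediately and is conceptually cleaner, but the price is exactly what you flag: the multiplicative structure now depends on rectifying an $A_\infty$-tower, whereas the paper never leaves the world of strict dg-bimodules. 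The identification of the edge map with $\shr$ is the same in both approaches.

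Two small cautions. First, ``$H^1(A)=0$ is equivalent to $\ext^0_A(k,k)=k$'' is stated in the paper only for finite type; fortunately your $A_\infty$-ideal degree count actually needs only $H^0(A)=k$ together with strict unitality, so the argument survives the general case. Second, be careful with ``multiplicative because the Postnikov tower is'': the tower being a tower of algebra maps gives only that each $F^n$ is a dg-ideal, hence $F^n\cdot F^m\subset F^{\max(n,m)}$. What the spectral sequence needs, in order to recover the product $H^{-p}(A)\otimes\ext^{-q}_A(k,k)$ on the page, is $F^n\cdot F^m\subset F^{n+m}$, and that comes from the \emph{fibres} satisfying $\tau^{\geq n}A\cdot\tau^{\geq m}A\to\tau^{\geq n+m}A$. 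Your minimal-model construction does deliver this (the $H^{\geq n}$ are nested $A_\infty$-ideals with the right multiplicative behaviour), but it is worth saying so explicitly rather than attributing it to the quotient tower.
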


Several remarks are in order. First, a multiplicative spectral sequence means there is a multiplication defined on each $E^r$-term for which the differential $d^r$ is a derivation, the multiplication on $E^{r+1}$ is the one induced from $E^r$ in the obvious way and the resulting multiplication on $E^\infty$ agrees with the multiplication on the associated graded object of $HH^*(A)$. Note that we use the convergence conditions for spectral sequences as defined by Boardman in \cite{BoardmanCCSpecSeq}.

Second, we did not specify the multiplicative structure on the $E^2$-term of this spectral sequence in the case where $A$ is not of finite type. Roughly speaking, one can consider $A$ as a coalgebra with respect to the derived tensor product $\otimes_A^\mathbf{L}$. On the other side we have an appropriate pairing
$H^n(A) \otimes_A^\mathbf{L} H^m(A) \to H^{n+m}(A)$, induced by the multiplication on $A$. Together, these yield the multiplicative structure on the $E^2$-term. The precise description of this multiplication is given in Section~\ref{sec: The multiplicative structure}. When $A$ is of finite type both multiplicative structures on the $E^2$-term agree under the appropriate isomorphism, this is done in Lemma~\ref{lem: Identifying the E2-term}.

Last, the grading of this spectral sequence is homological. Hence, the differential on the $E^r$-term is $d^r:E^r_{p,q} \to E^r_{p-r,q+r-1}$, and the spectral sequence lies in the second quadrant. This has the unfortunate consequence of yielding minus signs on the $E^2$-term description. The choice of homological grading was motivated by topological examples, discussed below in~\ref{sub: Relation to other work}.

The assumption that a coconnective augmented dga $A$ is simply connected is satisfied in many cases. As is well known, when $H^1(A)=0$ then $A$ is simply connected, and for dgas of finite type this is also a necessary condition. The spectral sequence exists also when the dga $A$ is not simply connected, but in that case we can only describe the $E^1$-term of the spectral sequence.
\begin{proposition}
\label{lem: General spectral sequence}
Let $A$ be a coconnective augmented dga over a field $k$. Then there exists a conditionally convergent multiplicative spectral sequence
\[ E^1_{p,q} =\ext^{-q}_{A^e}(A,A^{-p}) \ \Longrightarrow \ HH^{-p-q}(A) .\]
\end{proposition}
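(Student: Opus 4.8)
The plan is to obtain the spectral sequence as the one associated to a complete multiplicative filtration of the Hochschild cochain dga. Since $k$ is a field, $HH^*(A)$ is computed by the normalised Hochschild cochain complex $CH^*(A)=\prod_{n\ge 0}\Hom_k(\bar A^{\otimes n},A)$, with $\bar A=\ker(A\to k)$, carrying the cup product and the total differential assembled from the bar differential and the internal differential $d_A$; equivalently $CH^*(A)=\Hom_{A^e}(\bar B,A)$ for the reduced two-sided bar resolution $\bar B\to A$ of $A$ over $A^e$. I filter this dga by the internal degree of the output: let $F^mCH^*(A)$ be the subspace of cochains $f$ with $f(\bar A^{\otimes n})\subseteq\sigma^{\ge m}A$ for all $n$, where $\sigma^{\ge m}A$ is the brutal truncation. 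Because $A$ is coconnective and $d_A$ raises internal degree, each $\sigma^{\ge m}A$ is a sub-dg-bimodule of $A$ — a two-sided ideal for $m\ge 1$ — so $F^\bullet$ is a decreasing filtration of $CH^*(A)$ by subcomplexes; and because multiplication on $A$ is additive in internal degree and $A$ is concentrated in non-negative degrees, $F^m\cup F^{m'}\subseteq F^{m+m'}$, so the filtration is multiplicative. It is exhaustive, as $F^mCH^*(A)=CH^*(A)$ for $m\le 0$; Hausdorff, as $\bigcap_m\sigma^{\ge m}A=0$; and complete, as a compatible family of cochains into the $\sigma^{<m}A$ assembles degreewise to a cochain into $A$, so that $CH^*(A)=\lim_m CH^*(A)/F^mCH^*(A)$.

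I then read off the $E_1$-page and the abutment. The associated graded is $F^m/F^{m+1}=\prod_n\Hom_k(\bar A^{\otimes n},A^m)$, where $A^m$ is the internal-degree-$m$ part of $A$ viewed as a dg-$A^e$-module with zero differential; its cohomology is $\ext^*_{A^e}(A,A^m)=HH^*(A;A^m)$. As the filtration of $CH^*(A)$ is exhaustive and complete, the spectral sequence of this filtered complex converges conditionally, in the sense of \cite{BoardmanCCSpecSeq}, to $H^*CH^*(A)=HH^*(A)$. Re-indexing homologically — putting $p=-m$ and shifting the grading of $A^m$ as prescribed in~\ref{sub: Notation and terminology} — rewrites this as $E^1_{p,q}=\ext^{-q}_{A^e}(A,A^{-p})\Rightarrow HH^{-p-q}(A)$ with differential $d^r\colon E^r_{p,q}\to E^r_{p-r,q+r-1}$. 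The induced multiplication on $E^1$ is the cup product of Hochschild cochains with values multiplied via the pairings $A^m\otimes_k A^{m'}\to A^{m+m'}$; the resolution-free formulation of this product — through $A$ regarded as a coalgebra for $\otimes_A^{\mathbf{L}}$ and the pairings it induces on the pieces $A^{-p}$ — is the one given in Section~\ref{sec: The multiplicative structure}. Unlike the bounded case of Theorem~\ref{thm: Main theorem}, no strong-convergence assertion is made or needed here.

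The only input special to $A$ is coconnectivity: this is exactly what makes the brutal truncations two-sided ideals, and hence makes the output-degree filtration simultaneously a filtration by subcomplexes and a multiplicative one, while internal boundedness below is what gives completeness. What needs care is the surrounding formalism rather than $A$ itself: that Boardman's hypotheses for conditional convergence are indeed satisfied by a complete exhaustive filtration, and that the spectral sequence of a complete multiplicative filtered dga is multiplicative in the strong sense used in this paper — each $d^r$ a derivation, the multiplication on $E^{r+1}$ induced from that on $E^r$, and the multiplication on $E^\infty$ the associated graded of the cup product on $HH^*(A)$. I would isolate this as a general lemma on complete multiplicative filtered dgas, so as not to grind through the bookkeeping of signs and bigradings inline; applying it to $CH^*(A)$ with $F^\bullet$ yields the proposition.
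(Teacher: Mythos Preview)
Your approach is correct and rests on the same filtration as the paper: the brutal truncations $\sigma^{\ge m}A$, which the paper calls $J(m)$. The difference is packaging. The paper works in the derived category, forming the exact couple from the tower $\{\ext^*_{A^e}(A,J(m))\}_m$ and deducing conditional convergence from $\holim_m J(m)\simeq 0$ (Lemma~\ref{lem: Conditional convergence}); multiplicativity is then obtained by lifting the pairings $J(n)\otimes_k J(m)\to J(n+m)$ to pairings $J(n)\ba J(m)\to J(n+m)$ via the bar construction and verifying Massey's criterion by hand (Section~\ref{sec: The multiplicative structure}). You instead fix the bar resolution at the outset and filter the Hochschild cochain dga $CH^*(A)$ by output degree, so that multiplicativity is inherited from the general machinery of complete multiplicatively filtered dgas. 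Your route is more elementary and sidesteps the bar-construction bookkeeping the paper needs to make the pairings live in $\Derived(A^e)$; the paper's route is resolution-independent and model-categorical, which aligns with its later arguments identifying the $E^2$-term and comparing with the shearing map. The resulting spectral sequences coincide.
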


The main consequence of this spectral sequence is the following result.
\begin{theorem}
\label{thm: Elliptic mod p dgas}
Let $k$ be a field of characteristic $p>0$. Let $A$ be a simply connected dga over $k$ such that $H^*(A)$ is finite dimensional and $\ext^*_A(k,k)$ is finitely generated over a central Noetherian sub-algebra. Then $\ext^*_A(k,k)$ is finitely generated over the image of the shearing map and $HH^*(A)$ is Noetherian.
\end{theorem}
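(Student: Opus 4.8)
The plan is to feed the hypotheses into the spectral sequence of Theorem~\ref{thm: Main theorem}. As $H^*(A)$ is finite dimensional, $A$ is bounded and of finite type, so the theorem supplies a strongly convergent multiplicative spectral sequence $E^2_{p,q}=H^{-p}(A)\otimes_k\ext^{-q}_A(k,k)\Longrightarrow HH^{-p-q}(A)$. Write $R:=\ext^*_A(k,k)$, so $E^2_{0,*}=R$; since the $p=0$ row has no incoming differentials, the rings $R=E^2_{0,*}\supseteq E^3_{0,*}\supseteq\cdots$ form a descending chain of subrings of $R$ whose intersection $S:=E^\infty_{0,*}$ is, by the theorem, precisely the image of the shearing map $\shr$. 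If $d$ is the largest integer with $H^d(A)\ne0$, then $H^{-p}(A)=0$ outside $-d\le p\le0$, so the spectral sequence occupies a horizontal strip of width $d$, and since $d^r$ lowers $p$ by $r$ it collapses: $E^{d+1}=E^\infty$. I will reduce the theorem to two statements: (i) $R$ is finitely generated as a module over $S$; and (ii) $S$ is a Noetherian ring. Both follow from the single claim that a $p$-th power of each central generator of the given Noetherian subalgebra of $R$ already lies in $S$.

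Establishing that claim is the heart of the matter, and the only place where characteristic $p$ and the collapse of the spectral sequence are used. Let $C\subseteq R$ be the central Noetherian subalgebra over which $R$ is finitely generated as a module. As $A$ is simply connected, $R$ is a connected graded $k$-algebra, hence so is $C$, hence $C$ is finitely generated as an algebra; discarding the odd-degree generators (which are square-zero when $p$ is odd, so irrelevant to module-finiteness) we may take homogeneous generators $c_1,\dots,c_m$, each central in $R$ and of even degree, over which $R$ is still module-finite. Under the product on the $E^2$-page, which is the tensor-product algebra $H^*(A)\otimes_k R$, the element $1\otimes c$ is graded-central whenever $c$ is graded-central in $R$, and the class on a later page of a graded-central cycle is again graded-central. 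I claim that $c_i^{p^n}$ survives to $E^r$ for every $n\ge r-2$: given this at $E^r$ and taking $n\ge r-1$, the cycle $c_i^{p^{n-1}}$ survives to $E^r$, so $[c_i^{p^n}]=[c_i^{p^{n-1}}]^p$ there, and since $[c_i^{p^{n-1}}]$ is graded-central of even degree the Leibniz rule gives $d^r[c_i^{p^n}]=p\,[c_i^{p^{n-1}}]^{p-1}\,d^r[c_i^{p^{n-1}}]=0$, so $c_i^{p^n}$ survives to $E^{r+1}$. Taking $r=d+1$ gives $c_i^{p^{d-1}}\in E^\infty_{0,*}=S=\mathrm{im}(\shr)$.

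Granting this, put $C':=k[c_1^{p^{d-1}},\dots,c_m^{p^{d-1}}]\subseteq S$; it is a finitely generated commutative $k$-algebra, hence Noetherian, and it is central in $R$. Each $c_i$ is integral over $C'$, so $C$ is module-finite over $C'$, hence so is $R$; as $C'\subseteq S\subseteq R$ this is (i), and it also shows that $S$, a $C'$-submodule of the finite $C'$-module $R$ with $C'$ Noetherian, is module-finite over $C'$, so (using that a ring module-finite over a Noetherian subring is Noetherian) $S$ is Noetherian, which is (ii). Multiplication by the elements of $S$, all infinite cycles, commutes up to sign with every $d^r$, so each $E^r$ is a module over $S$ and $E^{r+1}$ an $S$-subquotient of $E^r$; since $E^2=H^*(A)\otimes_k R$ is module-finite over $R$, which is module-finite over the Noetherian ring $S$, and only finitely many pages are nonzero, $E^\infty$ is module-finite over $S=E^\infty_{0,*}$ and hence Noetherian. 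Finally, $HH^*(A)$ is graded-commutative and, by strong convergence, carries a finite multiplicative filtration with associated graded ring $E^\infty$; the standard transfer of Noetherianness from an associated graded ring to the filtered ring then gives that $HH^*(A)$ is Noetherian.

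The step I expect to need the most care is the inductive power argument in the second paragraph: a lower power $c_i^{p^{n-1}}$ need not survive far enough to factor $c_i^{p^n}$ on a given page unless one runs the induction over a whole tail of exponents simultaneously, and it is precisely the characteristic $p$ Leibniz identity, together with the collapse $E^{d+1}=E^\infty$ forced by the boundedness of $H^*(A)$, that pushes a definite power of each central class into the image of the shearing map. Everything after that is routine commutative algebra and standard manipulation of filtered rings.
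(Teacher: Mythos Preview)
Your argument is correct and is essentially the paper's proof: both run Quillen's $p$-th--power trick on central even-degree generators in the $E^2$-column $E^2_{0,*}\cong\ext^*_A(k,k)$, use the collapse at $E^{d+1}$ forced by boundedness of $H^*(A)$ to land those powers in the image of $\shr$, and then deduce that $E^\infty$, and hence $HH^*(A)$, is module-finite over a finitely generated commutative subalgebra. The only cosmetic differences are that the paper first passes to a polynomial subalgebra via Noether normalization before running the $p$-power argument (you work directly with generators of $C$), and that the paper phrases the conclusion as ``$HH^*(A)$ is module-finite over a polynomial subalgebra'' rather than going through Noetherianity of the associated graded; neither changes the substance. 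One small caveat: your reduction to even-degree generators is only justified for $p$ odd, as you note parenthetically; in characteristic $2$ odd-degree central elements need not be square-zero and cannot simply be discarded, but there the Leibniz identity $d^r(c^2)=2c\,d^r(c)=0$ holds regardless of parity, so you should just keep all generators when $p=2$ and the rest of your argument goes through unchanged (the paper's ``thus the degree of each $x_i$ must be even'' has the same tacit $p$-odd assumption).
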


\subsection{Support varieties for elliptic spaces}
\label{sub: Support varieties for elliptic spaces}

For completeness, we briefly recall here the results of~\cite{BIKsupport} which, by Corollary~\ref{cor: p-finite p-elliptic spaces}, are applicable for $p$-finite $p$-elliptic spaces.

Let $R$ be a graded-commutative Noetherian ring. Denote by $\spec R$ the homogeneous prime ideal spectrum of $R$. The \emph{specialization closure} of a subset $\uu \subset \spec R$ is the set
\[ \cl (\uu) = \{ \pp \in \spec R \ | \ \text{there exists } \qq\in \uu \text{ with } \qq \subset \pp\} .\]
A subset $\vv \subset \spec R$ is \emph{specialization closed} if $\cl (\vv) = \vv$.

Both the definition and the properties of support are summed up in the following corollary, which is mainly the application of~\cite[Theorem 1]{BIKsupport} to our setting. There are other useful properties that arise from the existence of a theory of support which are not listed here, these can be found in~\cite{BIKsupport}.

\begin{corollary}
\label{cor: Support}
Let $X$ be a $p$-finite $p$-elliptic space and let $R$ denote $HH^*(\chains^*(X;k))$. Then there exists a unique assignment sending each object $M \in \Derived(\chains_*(\Omega X;k))$ to a subset $\supp_R M$ of $\spec R$ satisfying:
\begin{enumerate}
\item For every $M \in \Derived(\chains_*(\Omega X;k))$
    \[\cl(\supp_R M) = \cl(\supp_R H_*(M)) ,\]
    where $\supp_R H_*(M)$ is the usual support of a module over a commutative ring.
\item For every $M,N \in \Derived(\chains_*(\Omega X;k))$
    \[\cl(\supp_R M) \cap \supp_R N = \emptyset \quad \text{implies} \quad \ext_{\chains_*(\Omega X;k)}^*(M,N) =0 .\]
\item For every exact triangle $M_1 \to M_2 \to M_3$ we have
    \[ \supp_R M_2 \subseteq \supp_R M_1 \cup \supp_R M_3 .\]
\item For any specialization closed subset $\vv \subset \spec R$ and for every $M \in \Derived(\chains_*(\Omega X;k))$ there is a distinguished triangle $M' \to M \to M''$ such that
    \[ \supp_R M' \subset \vv \quad \text{ and } \quad \supp_R M'' \subset \spec R \setminus \vv . \]
\end{enumerate}
In addition we have the following two properties:
\begin{itemize}
\item For every $M \in \Derived(\chains_*(\Omega X;k))$, $\supp_R M = \emptyset$ if and only if $M=0$.
\item If $M$ is a compact object of $\Derived(\chains_*(\Omega X;k))$ then $\supp_R M$ is the specialization closed subset $\supp_R H_*(M)$.
\end{itemize}
\end{corollary}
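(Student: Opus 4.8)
The plan is to obtain the corollary as an application of \cite[Theorem 1]{BIKsupport}, so the first task is to assemble the data that theorem requires and to check its hypotheses. Recall that \cite{BIKsupport} takes as input a compactly generated triangulated category $\mathcal{T}$ carrying a central action of a graded-commutative Noetherian ring $R$, subject to a cohomological finiteness condition, and returns exactly an assignment $M\mapsto\supp_R M$ satisfying the uniqueness statement and properties (1)--(4) together with the two concluding bullet points. I would take $\mathcal{T}=\Derived(\chains_*(\Omega X;k))$ and $R=HH^*(\chains^*(X;k))$.

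First I would dispose of the easy hypotheses. The category $\mathcal{T}$ is compactly generated, the free rank-one module $\chains_*(\Omega X;k)$ being a compact generator. The ring $R$ is graded-commutative by graded-commutativity of the Hochschild cup product, and it is Noetherian by Corollary~\ref{cor: p-finite p-elliptic spaces}. For the central action I would use the identification $HH^*(\chains^*(X;k))\cong HH^*(\chains_*(\Omega X;k))$ --- the invariance of Hochschild cohomology under the Koszul duality relating a $p$-finite space and its loop group, which underlies the discussion preceding this subsection --- together with the canonical homomorphism from the Hochschild cohomology of a dga $\Lambda$ into the graded centre of $\Derived(\Lambda)$: a bimodule representative of a class in $HH^n(\Lambda)$ yields, by the derived tensor product over $\Lambda$, a graded-central natural transformation $\mathrm{id}_{\mathcal{T}}\to\Sigma^n$.

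The finiteness hypothesis is where the assumptions on $X$ are used. I would run \cite[Theorem 1]{BIKsupport} with the compact generator $C=\chains_*(\Omega X;k)$, for which the associated cohomology functor is plain homology $H_*(-)$; this is what makes property (1) emerge in terms of the ordinary support of the $R$-module $H_*(M)$. What must then be checked is that $\ext^*_{\mathcal{T}}(C,C)=H_*(\Omega X;k)$ is a Noetherian $R$-module, and this is precisely the module-finiteness half of Corollary~\ref{cor: p-finite p-elliptic spaces}, granted the routine but not entirely formal verification that the $R$-module structure on $H_*(\Omega X;k)$ occurring there, via the shearing map $\shr\colon HH^*(\chains^*(X;k))\to\ext^*_{\chains^*(X;k)}(k,k)\cong H_*(\Omega X;k)$, coincides with the structure coming from the canonical map $HH^*(\chains_*(\Omega X;k))\to H_*(\chains_*(\Omega X;k))$ under the identification above. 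A variant would be to take $C=k$, which is a compact generator of $\mathcal{T}$ because $X$ is $p$-finite (the relevant smallness of $k$ over $\chains_*(\Omega X;k)$ going back to Dwyer--Greenlees--Iyengar); then $\ext^*_{\mathcal{T}}(k,k)\cong H^*(X;k)$ is finite dimensional and hence automatically Noetherian over $R$, at the cost of having to translate property (1) from $\ext^*_{\chains_*(\Omega X;k)}(k,M)$ back to $H_*(M)$.

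Once the hypotheses are in place, the existence and uniqueness of $\supp_R$, properties (1)--(4), the vanishing criterion $\supp_R M=\emptyset\iff M=0$, and the identification of $\supp_R M$ for compact $M$ are all read off from \cite[Theorem 1]{BIKsupport}; the vanishing criterion and the compact-object statement follow from the local--global principle together with the observation that a nonzero compact object has nonempty support, equal to the specialization-closed set $\supp_R H_*(M)$. I expect the one genuinely non-formal point in all of this to be the identification $HH^*(\chains^*(X;k))\cong HH^*(\chains_*(\Omega X;k))$ and its compatibility with the canonical actions on $H_*(\Omega X;k)$; everything else is bookkeeping, after which the corollary is a direct consequence of \cite{BIKsupport}.
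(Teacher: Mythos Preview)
Your outline matches the paper's proof: invoke the isomorphism $HH^*(\chains^*(X;k))\cong HH^*(\chains_*(\Omega X;k))$ (which the paper attributes to F\'elix--Menichi--Thomas~\cite{FelixMenichiThomas}) together with its compatibility with the shearing map, feed in the Noetherianity and module-finiteness from Corollary~\ref{cor: p-finite p-elliptic spaces}, and then read off the listed properties from~\cite{BIKsupport}. The only refinement in the paper is that the final bullet on compact objects is singled out and handled via~\cite[Theorem~5.4]{BIKsupport}, after observing that $H_*(\Omega X;k)$ is itself Noetherian (by $p$-ellipticity) so that $\ext^*(M,M)$ and $H_*(M)$ are finitely generated over $R$ for \emph{every} compact $M$, not just the generator.
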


We should explain why $H_*(M)$ is an $R$-module in the corollary above. If $A$ is a $k$-dga, and $U$ and $V$ are $A$-modules, then $\ext^*_A(U,V)$ is known to be an $HH^*(A)$-module. In particular, $H_{-*}(V)=\ext^{*}_A(A,V)$ is an $HH^*(A)$-module. The proof of Corollary~\ref{cor: Support} relies on the fact that $HH^*(\chains^*(X;k)) \cong HH^*(\chains_*(\Omega X;k))$, which explains the $R$-action on $H_*(M)$.

\subsection{Relation to other work}
\label{sub: Relation to other work}
As noted in the abstract, the spectral sequence presented here bears some resemblance the spectral sequence for string homology of Cohen, Jones and Yan presented in \cite{CohenJonesYan}, which we now describe. Let $M$ be a closed orientable manifold, let $k$ be a commutative ring and let $A$ be the dga of singular cochains with coefficients in $k$. The $E^2$-term of the spectral sequence of~\cite{CohenJonesYan} is $E^2_{p,q} \cong H^{-p}(M,H_q(\Omega M))$, where $\Omega M$ is the based loop space of $M$. Assuming also that $M$ is simply connected then this $E^2$-term is isomorphic to $H^{-p}(A) \otimes_k \ext^{-q}_A(k,k)$.

The spectral sequence of \cite{CohenJonesYan} converges to the homology of the free (unbased) loops on $M$, properly desuspended, which they call the \emph{loop homology}. By results of Cohen and Jones \cite{CohenJonesStringTop}, this loop homology is isomorphic to the Hochschild cohomology of $A$, this is an isomorphism of graded algebras when the loop homology is given the Chas-Sullivan product. Thus, when $M$ is simply connected, both the spectral sequence given here and that of \cite{CohenJonesYan} have the same $E^2$ and $E^\infty$ terms and both are multiplicative. 

In \cite{FelixThomasVPHochschild}, Felix, Thomas and Vigu{\'e}-Poirrier consider a map $I:HH^*(A) \to \ext^*_{A^e}(A,k)$, where $A$ is the dga of singular cochains with coefficients in $k$ on a simply connected closed oriented manifold. They give a model for $I$, and using this model get several results concerning the kernel and image of $I$. In Lemma~\ref{lem: Sheering map as in FTVP paper} it is shown that $I$ is equal to the shearing map and using the spectral sequence presented here we recover one of their results.

\subsection{On the choice of setting and method of proof}
In \cite{DuggerMultiplicative1} and \cite{DuggerMultiplicative2}, Dugger gives a systematic treatment of the construction of multiplicative spectral sequences for topological spaces and for spectra. To mimic Dugger's work, we have have found it easier to use Quillen model category machinery. This also highlights the only significant difference between the construction here and Dugger's treatment - in this paper we are forced to use the bar construction, since we cannot assume that our filtration consists of cofibrant objects.

Roughly speaking, one gets a multiplicative structure on maps from a comonoid to a monoid in any monoidal category. In \cite{DuggerMultiplicative2}, Dugger shows how an appropriate filtration of the comonoid yields a multiplicative spectral sequence. Here we filter the monoid as Dugger does in \cite{DuggerMultiplicative1}. Viewed in this way, the construction of the spectral sequence is a simple translation of classical constructions from topology.

\subsection{Organization of this paper}
We start by presenting and proving the consequences of the spectral sequence in Section~\ref{sec: Consequences of the spectral sequence}. In Sections~\ref{sec: Model category preliminaries} and \ref{sec: Differential graded preliminaries} we set the necessary model category structure and the differential graded tools we will use. In Section~\ref{sec: Construction of the spectral sequence} the spectral sequence is constructed. Section~\ref{sec: The multiplicative structure} establishes the multiplicative properties of the spectral sequence. Section~\ref{sec: The Hochschild cohomology shearing map} gives various descriptions of the shearing map. Finally, in Section~\ref{sec: Identifying the $E^2$-term}, we identify the $E^2$-term of the spectral sequence.

The proof of Theorem~\ref{thm: Main theorem} is spread throughout this paper. Lemma~\ref{lem: Conditional convergence} shows the existence of the spectral sequence and its convergence properties. Proposition~\ref{pro: Multiplicativity of the HH spectral sequence} gives the multiplicative property. Lemma~\ref{lem: Identifying the E2-term} identifies the $E^2$-term of the spectral sequence when $A$ is of finite type. Finally, Lemma~\ref{lem: Infinite cycles} proves the statement regarding the image of the shearing map.

\subsection{Notation and terminology}
\label{sub: Notation and terminology}
A chain complex $X$ is described by a pair $(X^\udl,d^X)$ where $X^\udl$ is the underlying graded abelian group and $d^X$ is the differential. Note that we adhere to the convention that subscript grading is homological degree, while superscript grading is cohomological. Thus $X_n=X^{-n}$ and the differential lowers the homological degree (or raises the cohomological degree) $d:X_n \to X_{n-1}$. The tensor product of two chain complexes over $k$ will be denoted simply by $\otimes$.

Throughout $k$ is a field and $A$ is a coconnective augmented dga over $k$ whose augmentation map $A \to k$ is denoted by $\aug$. The opposite dga is denoted by $A^\op$ and $A^e$ is the dga $A \otimes A^\op$. The derived category of differential graded left $A$-modules will be denoted by $\Derived(A)$. By an \emph{equivalence} of chain complexes we mean a quasi isomorphism, as usual such morphisms are denoted by $\sim$.

We will refer to morphisms of chain complexes and differential graded $A$-modules as \emph{maps} (the relevant categories are described in Section~\ref{sec: Model category preliminaries}). This will serve to distinguish maps from the morphisms in the corresponding derived categories.

\subsection{Acknowledgments}
I am grateful to M.Brun for many useful discussions and for pointing out several mistakes in previous versions of this paper.

\section{Consequences of the spectral sequence and proofs of the main results}
\label{sec: Consequences of the spectral sequence}

\subsection{Noetherian property and the shearing map}
Three equivalent definitions of the shearing map will be given in Section~\ref{sec: The Hochschild cohomology shearing map}, but we give a quick review of one here. There is a way to assign to each element $x$ in the Hochschild cohomology $HH^*(A)$ a natural transformation $\zeta(x):1_{\Derived(A)} \to \Sigma^n 1_{\Derived(A)}$, see~\ref{sub: The third description of the shearing map} for details. Roughly speaking, given a morphism $x: A \to \Sigma^n A$ in $\Derived(A^e)$ we have the natural transformation $\zeta(x)_M=x \otimes_A^\mathbf{L} M$. This assignment preserves addition and turns multiplication to composition of natural transformations. We can now define the shearing map $\shr:HH^*(A) \to \ext^*_A(k,k)$ by $\shr(x)=\zeta(x)_k$.

>From the definition it is immediate that that the image of the shearing map $\shr$ lies in the centre of $\ext_A^*(k,k)$. In Lemma~\ref{lem: Sheering map as in FTVP paper} we show that our definition of the shearing map agrees with that of the morphism $I$ from \cite{FelixThomasVPHochschild}, thereby recovering their result that the image of $I$ is central. We also recover the following result of \cite[4.1 - Theorem 7]{FelixThomasVPHochschild}
\begin{theorem}
Let $A$ be a simply connected bounded dga over a field $k$ such that $H^n(A)=0$ for $n>d$. Then the kernel of the shearing map is nilpotent of nilpotency index less than or equal to $d$. If, in addition, $H^1(A)=0$ then the nilpotency index is less than or equal to $d/2$.
\end{theorem}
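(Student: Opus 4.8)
The plan is to read the statement off the multiplicative spectral sequence of Theorem~\ref{thm: Main theorem}. Since $A$ is simply connected and bounded, that theorem provides a strongly convergent multiplicative spectral sequence
\[ E^2_{p,q} = \ext^{-q}_{A^e}(A,H^{-p}(A)) \ \Longrightarrow \ HH^{-p-q}(A) , \]
and hence an exhaustive Hausdorff filtration $F_\bullet HH^*(A)$ with associated graded $E^\infty$, which is a filtration by ideals, $F_i\cdot F_j\subseteq F_{i+j}$, because the spectral sequence is multiplicative. As $A$ is coconnective we have $H^n(A)=0$ for $n<0$, and by hypothesis $H^n(A)=0$ for $n>d$; thus $E^2_{p,q}=0$ unless $-d\le p\le 0$, and the same vanishing is inherited by every $E^r_{p,q}$ and by $E^\infty_{p,q}$. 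Together with strong convergence this forces $F_0 HH^*(A)=HH^*(A)$ and $F_{-d-1}HH^*(A)=0$.

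Next I would locate $\ker\shr$ inside this filtration. The column $p=0$ receives no differentials, since any incoming $d^r$ would originate in a column $p\ge 2$, where $E^r$ vanishes; hence the infinite cycles in $E^2_{0,*}$ coincide with $E^\infty_{0,*}$, which is identified, via the edge homomorphism, with the quotient $F_0 HH^*(A)/F_{-1}HH^*(A)$. By Theorem~\ref{thm: Main theorem}, under the isomorphism $\ext^*_A(k,k)\cong E^2_{0,*}$ this edge homomorphism is the shearing map $\shr\colon HH^*(A)\to\ext^*_A(k,k)$. Therefore $\ker\shr=F_{-1}HH^*(A)$.

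It then suffices to invoke multiplicativity of the filtration. In general
\[ (\ker\shr)^{d+1} = \bigl(F_{-1}HH^*(A)\bigr)^{d+1} \ \subseteq\ F_{-(d+1)}HH^*(A) = 0 , \]
so $\ker\shr$ is a nilpotent ideal with $(\ker\shr)^{d+1}=0$, which is the asserted bound on the nilpotency index. If moreover $H^1(A)=0$, then $E^\infty_{-1,*}$ is a subquotient of $E^2_{-1,*}=\ext^*_{A^e}(A,H^1(A))=0$, whence $F_{-1}HH^*(A)=F_{-2}HH^*(A)$; iterating the displayed inclusion then gives $(\ker\shr)^m\subseteq F_{-2m}HH^*(A)$, and the right-hand side is zero as soon as $2m>d$, which yields the improved bound.

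I do not anticipate a genuine obstacle: granted Theorem~\ref{thm: Main theorem}, the argument is formal. The two points that require care are the identification of $\shr$ with the edge homomorphism of the spectral sequence, so that $\ker\shr$ is literally the filtration step $F_{-1}HH^*(A)$ rather than merely an abstract preimage of $E^\infty_{0,*}$, and the use of the boundedness of $A$, which is exactly what makes the spectral sequence converge strongly and the filtration finite and exhaustive, legitimizing the equalities $F_0 HH^*(A)=HH^*(A)$ and $F_{-d-1}HH^*(A)=0$.
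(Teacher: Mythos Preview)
Your argument is correct and is essentially the same as the paper's: both read the result off the strong convergence and multiplicativity of the spectral sequence of Theorem~\ref{thm: Main theorem}, identifying $\ker\shr$ with the part of the filtration/associated graded in columns $p<0$. The only cosmetic difference is that the paper phrases the nilpotency bound via the ideal $\bigoplus_{p<0}E^\infty_{p,*}$ in the associated graded, whereas you work directly with the filtration step $F_{-1}HH^*(A)$ and the inclusion $F_i\cdot F_j\subseteq F_{i+j}$; your treatment of the $H^1(A)=0$ refinement via $F_{-1}=F_{-2}$ is also a bit more explicit than the paper's, which leaves that computation to the reader.
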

\begin{proof}
Since $A$ is bounded the spectral sequence has strong convergence. It also implies that all the elements in $E^r_{p,q}$ are nilpotent whenever $p\neq 0$. Theorem~\ref{thm: Main theorem} implies that $\oplus_{p<0}E^\infty_{p,*}$ is isomorphic to the kernel of the shearing map. To be precise, $E^\infty$ is isomorphic to the associated graded object of $HH^*(A)$ coming from some filtration, and under this isomorphism $\oplus_{p<0}E^\infty_{p,*}$ can be identified with the kernel of the shearing map.

Consequently, this means that the kernel of the shearing map is nilpotent and has nilpotency index which is less than or equal to the nilpotency index of the ideal $\oplus_{p<0}E^\infty_{p,*}\subset E^2_{*,*}$.
\end{proof}

As noted earlier, the main consequence of the spectral sequence is Theorem~\ref{thm: Elliptic mod p dgas} which we now prove.
\begin{proof}[Proof of Theorem~\ref{thm: Elliptic mod p dgas}]
Suppose $A$ is simply connected and bounded. Denote by $B$ the graded algebra $\ext_A^*(k,k)$. We are also assuming $B$ is finitely generated as a module over a central Noetherian sub-algebra $N$. We mean \emph{central} in the graded commutative sense, thus if $n \in N$ and $x \in B$ then $nx=(-1)^{|n||x|} xn$, but this will play no part in what follows.

Since $N$ is Noetherian then by Noether Normalization it contains a polynomial sub-algebra $P=k[x_1,...,x_n]$ such that $N$ is finitely generated over $P$. Thus the degree of each $x_i$ must be even. By identifying $B$ with $E^2_{0,*}$ we shall now consider $P$ as a sub-algebra of $E^2$. By the description of the multiplication on the $E^2$-term when $A$ is of finite type we see that $P$ is central in $E^2$.

We now employ an argument of Quillen. By the Leibnitz rule and the fact that $x_i$ is central and of even degree, we see that $x_i^p$ is a cycle for $d^2$, for every $i$. Continuing in this fashion, we see that $x_i^{p^r}$ is a cycle for the differential $d^r$ on $E^r$. Since $A$ is bounded the spectral sequence collapses at some finite stage, say $R$. Hence $x_1^{p^{R}},...,x_n^{p^R}$ are all infinite cycles and so in the image of the shearing map. Clearly $B$ is finitely generated as a module over the sub-algebra $P'=k[x_1^{p^{R}},...,x_n^{p^R}]$.

Since $H^*(A)$ is finite dimensional, we see that $E^2$ is finitely generated as a module over $P'$. Hence $E^2$ is a Noetherian $P'$-module. In addition, because the elements of $P'$ are infinite cycles we see, by the Leibnitz rule, that the differential $d^2$ is a morphism of $P'$-modules. Hence the kernel of $d^2$ is a Noetherian $P'$-module and so is this kernel's quotient $E^3$. Continuing by induction we end up showing that $E^R=E^\infty$ is a Noetherian $P'$-module. This shows that $HH^*(A)$ is finitely generated as a module over a polynomial sub-algebra and hence Noetherian.
\end{proof}

\begin{proof}[Proof of Corollary~\ref{cor: p-finite p-elliptic spaces}]
The results of Felix, Halperin and Thomas from~\cite{FelixHalperinThomasHopfAlgebraElliptic} and~\cite{FelixHalperinThomasEliiptic1} immediately imply that for any $p$-finite $p$-elliptic space $X$ the dga $\chains^*(X;\Int/p)$ satisfies the conditions of Theorem~\ref{thm: Elliptic mod p dgas}.
\end{proof}

\subsection{Constructing support varieties}

\begin{proof}[Proof of Corollary~\ref{cor: Support}]
It follows from the results of Felix, Menichi and Thomas~\cite{FelixMenichiThomas} that there is an isomorphism
\[ HH^*(\chains^*(X;k)) \cong HH^*(\chains_*(\Omega X;k)) .\]
We also note that under this isomorphism the shearing map becomes the obvious morphism
\[HH^{-*}(\chains_*(\Omega X;k)) \to H_*(\Omega X;k) .\]
>From Corollary~\ref{cor: p-finite p-elliptic spaces} we see that $R=HH^*(\chains_*(\Omega X;k))$ is Noetherian and therefore the machinery of Benson, Iyengar and Krause~\cite{BIKsupport} applies.

Only the last property requires additional attention. Let $M$ be a compact object of $\Derived(\chains_*(\Omega X;k))$. Since $X$ is $p$-elliptic we see that $H_*(\Omega X;k)$ is Noetherian. By Theorem~\ref{thm: Elliptic mod p dgas}, $H_*(\Omega X;k)$ is finitely generated over $R$. We conclude that both $\ext_{\chains^*(\Omega X;k)}^*(M,M)$ and $H_*(M)$ are finitely generated over $R$. The final property now follows from~\cite[Theorem 5.4]{BIKsupport}.
\end{proof}

The next proposition is only a simple observation; it is given in order to show that the support is well behaved. For a fixed space $X$ recall that the category of spaces under $X$ has for objects maps $f:X \to Y$ and morphisms are commuting triangles. We consider the subsets of a given set as a category with morphisms being inclusions.

\begin{proposition}
\label{pro: Functor from spaces under to support}
Let $X$ be a $p$-finite $p$-elliptic pointed space and let $k=\Int/p$. Then the support defined in Corollary~\ref{cor: Support} induces a contravariant functor
\[ \Psi:\left\{
     \begin{array}{c}
        \text{Pointed connected}\\
        \text{spaces under }X\\
     \end{array}
   \right\}^\op
   \longrightarrow
   \Big\{\text{subsets of } \spec\, HH^*(\chains^*(X;k)) \Big\}
\]
where $\Psi(f:X \to Y) = \supp_{HH^*(\chains^*(X;k))} \chains_*(\Omega Y;k)$ and $\chains_*(\Omega Y;k)$ is a $\chains_*(\Omega X;k)$-module via the map $\chains_*(\Omega f;k)$.
\end{proposition}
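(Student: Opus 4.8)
The plan is to unwind the contravariance into a single inclusion of supports and then deduce that inclusion from the behaviour of Benson--Iyengar--Krause support under restriction of scalars. Recall that a morphism in the source category from $(f\colon X\to Y)$ to $(g\colon X\to Z)$ is a pointed map $h\colon Y\to Z$ with $hf=g$. Since the target --- subsets of $\spec R$, where $R=HH^*(\chains^*(X;k))$, ordered by inclusion --- is a poset, a contravariant functor amounts to the object assignment $\Psi(f)=\supp_R \chains_*(\Omega Y;k)$ (well defined, with the module structure as described) together with the single requirement that each such $h$ yield the inclusion
\[ \supp_R \chains_*(\Omega Z;k)\ \subseteq\ \supp_R \chains_*(\Omega Y;k). \]
Preservation of identities and composites is then automatic, because between two objects of a poset there is at most one morphism and inclusions compose. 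So the entire proposition reduces to establishing this one inclusion.

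Next I would set up the module structures. Write $\Lambda_W=\chains_*(\Omega W;k)$ and $\phi=\chains_*(\Omega f;k)\colon \Lambda_X\to\Lambda_Y$ for the dga map induced by $f$, so that restriction of scalars gives a functor $\phi^{*}\colon\Derived(\Lambda_Y)\to\Derived(\Lambda_X)$. Functoriality of $\Omega$ (in a group model) and of $\chains_*(-;k)$ turns $\Omega g=\Omega h\circ\Omega f$ into $\chains_*(\Omega g;k)=\chains_*(\Omega h;k)\circ\phi$. Hence the $\Lambda_X$-module $\chains_*(\Omega Z;k)$ computing $\Psi(g)$ is exactly $\phi^{*}$ applied to $\Lambda_Z$ viewed as a $\Lambda_Y$-module via $\chains_*(\Omega h;k)$, while $\Psi(f)=\supp_R\phi^{*}\Lambda_Y$. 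Thus both supports are taken in $\Derived(\Lambda_X)$, for the single $R\cong HH^*(\Lambda_X)$-action supplied by Corollary~\ref{cor: Support}.

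Now I would invoke the localizing-subcategory monotonicity of support. The free module $\Lambda_Y$ is a compact generator of $\Derived(\Lambda_Y)$, so every object --- in particular $\Lambda_Z$ --- lies in $\mathrm{Loc}(\Lambda_Y)=\Derived(\Lambda_Y)$. Since $\phi^{*}$ is triangulated and preserves arbitrary coproducts, $\phi^{*}\Lambda_Z$ lies in the localizing subcategory of $\Derived(\Lambda_X)$ generated by $\phi^{*}\Lambda_Y$. Because $\supp_R$ (see~\cite{BIKsupport}) sends coproducts to unions, is stable under suspension and retracts, and satisfies the triangle inequality of Corollary~\ref{cor: Support}(3), the objects $M$ with $\supp_R M\subseteq \supp_R\phi^{*}\Lambda_Y$ form a localizing subcategory; it contains $\phi^{*}\Lambda_Y$, hence contains $\phi^{*}\Lambda_Z$, giving the desired inclusion $\supp_R \chains_*(\Omega Z;k)\subseteq\supp_R \chains_*(\Omega Y;k)$.

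The routine points are the functorial identification of the two module structures and the poset-automaticity of functoriality. The step deserving the most care --- and the only one reaching beyond the properties literally enumerated in Corollary~\ref{cor: Support} --- is the monotonicity of support along a localizing subcategory: this needs that $\supp_R$ carry coproducts to unions and be closed under retracts, which I would cite from \cite{BIKsupport} rather than derive from the corollary. I would also take care to check that the central $R$-action on restricted $\Lambda_Y$-modules is the one induced from $\Derived(\Lambda_X)$, so that the inclusion of localizing subcategories genuinely transfers to an inclusion of $R$-supports; with that in place, the sign of the inclusion comes out correctly contravariant, the module $\chains_*(\Omega Z;k)$ lying \emph{further} from $X$ having the \emph{smaller} support.
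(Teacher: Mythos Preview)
Your proposal is correct and follows essentially the same line as the paper: reduce to the single inclusion $\supp_R \chains_*(\Omega Z;k)\subseteq\supp_R \chains_*(\Omega Y;k)$, observe that $\chains_*(\Omega Z;k)$ lies in the localizing subcategory of $\Derived(\chains_*(\Omega X;k))$ generated by $\chains_*(\Omega Y;k)$, and then appeal to monotonicity of support along localizing subcategories. The only difference is in how that last step is justified: the paper works directly with the functors $\Gamma_\pp$ (noting that $\Gamma_\pp$ is triangulated and coproduct-preserving, so $\Gamma_\pp\chains_*(\Omega Y;k)=0$ forces $\Gamma_\pp\chains_*(\Omega Z;k)=0$), whereas you argue axiomatically via the coproduct-to-union and triangle properties of $\supp_R$ from \cite{BIKsupport}; both are valid and amount to the same thing.
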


Note that for spaces $Y$ under $X$ such that $\chains_*(\Omega Y;k)$ is compact in $\Derived( \chains_*(\Omega X;k))$ the proposition is obvious, since in this case the support is simply the support of $H_*(\Omega Y;k)$.

To prove Proposition~\ref{pro: Functor from spaces under to support} we must understand a little more about the construction of support. For the rest of this section we fix a prime $p$, set $k=\Int/p$, let $\Derived$ denote the derived category of $\chains_*(\Omega X;k)$ and let $R=HH^*(\chains_*(\Omega X;k))$.

For every prime ideal $\pp\in \spec R$, Benson, Iyengar and Krause \cite{BIKsupport} construct a triangulated functor $\Gamma_\pp:\Derived \to \Derived$ which preserves coproducts. The support of an object $M\in \Derived$ is defined by
\[ \supp_R M = \{ \pp \in \spec R \ | \ \Gamma_\pp M \neq 0\}.\]
Recall that a \emph{localizing subcategory} of $\Derived$ is a full triangulated subcategory closed under coproducts. The \emph{localizing subcategory generated by an object} is the minimal localizing subcategory containing that object. Since $\Gamma_\pp$ preserves coproducts then the inverse image under $\Gamma_\pp$ of a localizing subcategory is a localizing subcategory.

\begin{proof}[Proof of Proposition~\ref{pro: Functor from spaces under to support}]
Let $f:X \to Y$ and $g:X \to Z$ be pointed connected spaces under $X$ and let $h:Y \to Z$ be a map of spaces under $X$. We must show that
\[ \supp_R \chains_*(\Omega Z;k) \subset \supp_R \chains_*(\Omega Y;k).\]

It is a simple exercise to show that $\chains_*(\Omega Z;k)$ lies in the localizing subcategory of $\Derived$ generated by $\chains_*(\Omega Y;k)$. It follows that for every prime $\pp \in \spec R$ the object $\Gamma_\pp  \chains_*(\Omega Z;k)$ is contained in the localizing subcategory generated by $\Gamma_\pp  \chains_*(\Omega Y;k)$.
\end{proof}

\section{Model category preliminaries}
\label{sec: Model category preliminaries}

\subsection{The category of dg-$k$-modules}
By a \emph{$k$-module} we mean a differential graded $k$-module, in other words a $\Int$-graded chain complex of $k$-vector spaces. Following Dwyer~\cite{DwyerNoncommutative}, a $k$-module concentrated in degree 0 will be referred to as a \emph{discrete} $k$-module. A map of $k$-modules is just a chain map.

The category of $k$-modules has a cofibrantly generated model category structure described by Hovey in \cite[2.3]{HoveyBook}. In this model category structure the weak equivalences are quasi isomorphisms and the fibrations are degreewise surjections. In addition, this is a symmetric monoidal model category \cite[Proposition 4.2.13]{HoveyBook}, with the monoidal structure being the usual tensor product $\otimes$ of chain complexes over $k$. A monoid with respect to $\otimes$ is simply a dga.

Note that we define the suspension of a $k$-module $X$ to be $\Sigma X = (\Sigma k) \otimes X$.

\subsection{The category of $A$-modules}
Fix a dga $A$ over a field $k$. Since $A$ is a monoid with respect to $\otimes$ one can define the category of left $A$-modules as in \cite{SchwedeShipleyAlgebrasAndModules}. We shall describe these explicitly. An \emph{$A$-module} a differential graded \emph{left} $A$-module. A morphism of $A$-modules $f:M \to N$ is a morphism of chain complexes of degree zero which commutes with the action of $A$. The resulting category of $A$-modules is clearly an abelian category.

The category of $A$-modules has a Quillen model category structure where the weak equivalences are quasi-isomorphisms and fibrations are degreewise surjections, see \cite[Theorem 4.1]{SchwedeShipleyAlgebrasAndModules}. Hence every object is fibrant in this model category structure. The \emph{derived category of $A$-modules}, denoted $\Derived(A)$, is the homotopy category of this Quillen model category. As is well known, $\Derived(A)$ is a triangulated category and a short exact sequence of $A$-modules induces an exact triangle in $\Derived(A)$, since it is a homotopy fibration sequence. We define $\ext^n_A(X,Y)$ to be $\hom_{\Derived(A)}(\Sigma^{-n} X,Y)$.

Since $\otimes$ is symmetric, given a dga $A$ we can define its opposite $A^\op$, and $A^e=A\otimes A^\op$ is also a dga. An $A$-bimodule is simply an $A^e$-module, and thus the derived category of $A$-bimodules is $\Derived(A^e)$. When considering $A$ as an $A^e$-module we always mean the obvious bimodule structure on $A$ (there can be other bimodule structures, e.g. $A \otimes k \cong A$,  but they will play no part in this paper).

The category of $A$-bimodules has a tensor product $-\otimes_A-$. This tensor product is not symmetric and its unit $A$ is not cofibrant. Nevertheless there is a monoidal structure $-\otimes_A^\mathbf{L}-$ on the derived category of bimodules.

\subsection{Cones and cylinders}
\label{sub: Cones and cylinders}
The \emph{cone} of a map $X \to Y$ of $A$-modules is the $A$-module $\cone f=(\Sigma X^\natural \oplus Y^\natural, d)$ where $d=(-d^X,f+d^Y)$. The map $Y \to \coker f$ factors through a natural map $c_f:Y \to \cone f$ whose cokernel is $\Sigma X$. We write $\cone X$ for $\cone 1_X$ and $c_X$ for $c_{1_X}$. In fact, $\cone X = (\cone k) \otimes X$ and the map $\cone X \to \Sigma X$ is simply $(\cone k \to \Sigma k) \otimes X$.

From~\cite{SchwedeShipleyAlgebrasAndModules} and \cite[2.3]{HoveyBook} we learn that the generating cofibrations for $A$-modules are $\mathcal{I}=\{\Sigma^n A \to \cone\Sigma^n A \}$ and the generating acyclic cofibrations are $\mathcal{J}=\{0 \to \cone \Sigma^n A\}$. From this one easily sees that for any cofibrant $A$-module $X$, the map $X \to \cone X$ is a cofibration. Since $\cone f$ is the pushout of $\cone X \leftarrow X \xrightarrow{f} Y$ we see that $Y \to \cone f$ is a cofibration whenever $X$ is cofibrant.

\begin{remark}
It is easy to see that an $A$-module $X$ is in $\mathcal{I}$-cell \cite[Definition 2.1.9]{HoveyBook} if and only if $X$ is a semi-free $A$-module \cite{FelixHalperinThomasDGA}. Hence whenever $X$ is cofibrant then $X$ is a retract of a semi-free $A$-module.
\end{remark}

Given an $A$-module $X$ we define the \emph{cylinder} of $X$ to be $X \wedge I = \cone(X \xrightarrow{(1,1)} X \oplus X)$. There are obvious maps $X\oplus X \to X\wedge I \to X$. It is a simple exercise to show that whenever $X$ is cofibrant then $X \wedge I$ is a very good cylinder object for $X$ \cite[Definition 4.2]{DwyerSpalinski}, i.e. $X\oplus X \to X\wedge I$ is a cofibration and $X\wedge I \to X$ is an acyclic fibration.

\section{Differential graded preliminaries}
\label{sec: Differential graded preliminaries}

\subsection{Realization of simplicial $A$-modules}
The construction we name realization is simply the homotopy colimit of a simplicial $A$-module. There are other well known choices for this homotopy colimit, we have chosen one whose good properties are easy to prove.

\begin{definition}
Let $X_{(\bullet)}$ be a simplicial $A$-module, thus each $X_{(n)}$ is an $A$-module with the face maps $d_i:X_{(n)} \to X_{(n-1)}$, $0\leq i\leq n$ being $A$-module morphisms (we will only concern ourselves with the face maps). Define the \emph{realization} of $X_{(\bullet)}$ to be the $A$-module $|X_{(\bullet)}|$ whose underlying graded $A^\natural$-module is
\[ \bigoplus_p (\Sigma^p X_{(p)})^\natural\]
with differential $\partial$ given by
\[ \partial|_{\Sigma^p X_{(p)}^\natural} = (-1)^p\partial^{X_{(p)}} + \sum_{i=0}^p (-1)^i d_i .\]
It is not difficult to see that $|X_{(\bullet)}|$ is an $A$-module. Thus, realization is a functor from simplicial $A$-modules to $A$-modules.
\end{definition}

This realization is none other than the total complex of the double complex generated from $X_{(\bullet)}$, where the additional differential is simply $\sum_{i=0}^p (-1)^i d_i$.

\begin{definition}
Given an $A$-module $M$ the \emph{constant simplicial $A$-module $M_{(\bullet)}$} is given by $M_{(n)}=M$ for all $n$ and all maps are the identity map. It is easy to see that the realization of the constant simplicial $A$-module $M_{(\bullet)}$ has a natural equivalence $|M_{(\bullet)}| \xrightarrow{\simeq} M$.
\end{definition}

\subsection{The bar construction}
The (unnormalized two-sided) bar construction provides a model for the derived tensor product of a right $A$-module with a left $A$-module. We recall this construction next.

\begin{definition}
Let $M$ be a right $A$-module and let $N$ be a left $A$-module. The simplicial $A$-module $B_{(\bullet)}(M,A,N)$ is
\[ B_{(t)} (M,A,N) = M \otimes_k A^{\otimes t} \otimes_k N \]
where $A^{\otimes t}=\underbrace{A\otimes_k \cdots \otimes_k A}_{t\text{ times}}$. The face maps are
\[ d_i=
 \begin{cases}
  \eta_M \otimes 1_A^{t-1} \otimes 1_N & i=0 \\
  1_M \otimes 1_A^{i-1} \otimes \mu \otimes 1_A^{t-i-1} \otimes 1_N & 0<i<t\\
  1_M \otimes 1_A^{t-1} \otimes \eta_N & i=t
 \end{cases}
\]
where $\mu:A\otimes A \to A$ is the multiplication map and $\eta_M$ and $\eta_N$ are the module structure maps $\eta_M:M \otimes A \to A$ and $\eta_N:A\otimes N \to N$. We have no use for the degeneracy maps and so we forgo their description. The \emph{bar construction} is the realization $|B_{(\bullet)}(M,A,N)|$ which we shall denote by $M \ba N$.
\end{definition}

When $M$ and $N$ are $A$-bimodules then $M\ba N$ is again an $A$-bimodule, with the left $A$-action coming from the left $A$-action on $M$ and the right $A$-action coming from the right $A$-action on $N$. The following is well known.

\begin{lemma}
\label{lem: Map from derived A-tensor to non derived}
Let $M$ be a right $A$-module and let $N$ be a left $A$-module, then $M \ba N$ is a model for the derived tensor product $M \otimes_A^\mathbf{L} N$ and there exists a natural map $M \ba N \to M \otimes_A N$.
\end{lemma}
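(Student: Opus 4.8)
The plan is to treat the two assertions separately. The natural map $M \ba N \to M \otimes_A N$ is elementary, so I would do it first. Recall that $M \otimes_A N$ is the cokernel of $\eta_M \otimes 1_N - 1_M \otimes \eta_N : M \otimes_k A \otimes_k N \to M \otimes_k N$, that is, the coequalizer of the two face maps $d_0, d_1 : B_{(1)}(M,A,N) \to B_{(0)}(M,A,N)$. I would define $\tilde\epsilon : M \ba N \to M \otimes_A N$ to be the quotient map $M \otimes_k N \twoheadrightarrow M \otimes_A N$ on the summand $(\Sigma^0 B_{(0)}(M,A,N))^\natural = (M\otimes_k N)^\natural$ of the realization and to be zero on every summand $(\Sigma^p B_{(p)}(M,A,N))^\natural$ with $p \geq 1$. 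That this is a chain map follows at once from the formula for the realization differential: on $\Sigma^0 B_{(0)}$ it restricts to the internal differential of $M \otimes_k N$, which the quotient map respects; on $\Sigma^1 B_{(1)}$ the part of the differential landing in $\Sigma^0 B_{(0)}$ is $d_0 - d_1$, which $\tilde\epsilon$ annihilates by the coequalizer property; and on the remaining summands both $\tilde\epsilon$ and $\tilde\epsilon \circ \partial$ vanish for degree reasons. The map is $A$-linear for whatever one-sided or bimodule structure is being carried, and is visibly natural in $M$ and $N$. Note that, specialising $N = A$ and composing with $M \otimes_A A \cong M$, this produces a natural augmentation $\varepsilon_M : M \ba A \to M$, which is the map used below.

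For the claim that $M \ba N$ models $M \otimes_A^{\mathbf{L}} N$, the strategy is to show that $\varepsilon_M : M \ba A \to M$ is a cofibrant replacement of $M$ in the category of right $A$-modules and that there is a natural isomorphism $M \ba N \cong (M \ba A) \otimes_A N$; since $\otimes_A^{\mathbf{L}}$ is computed by cofibrant replacement in the first variable, the conclusion follows. To see that $M \ba A$ is cofibrant, consider the skeletal filtration $F_p = \bigoplus_{q \leq p} (\Sigma^q B_{(q)}(M,A,A))^\natural$, so that $F_{-1} = 0$ and $M \ba A = \colim_p F_p$. Up to signs each inclusion $F_{p-1} \hookrightarrow F_p$ is of the form $F_{p-1} \to \cone(g)$ where $g : \Sigma^{p-1} B_{(p)}(M,A,A) \to F_{p-1}$ is the alternating sum of face maps. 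Now $B_{(p)}(M,A,A) = (M \otimes_k A^{\otimes p}) \otimes_k A$ is a free right $A$-module on the chain complex $M \otimes_k A^{\otimes p}$ of $k$-vector spaces; since $k$ is a field this complex is a direct sum of copies of $\Sigma^n k$ and $\cone \Sigma^n k$, so $B_{(p)}(M,A,A)$ is a direct sum of copies of $\Sigma^n A$ and $\cone \Sigma^n A$, hence cofibrant, and so is its suspension $\Sigma^{p-1} B_{(p)}(M,A,A)$. By the observation recorded in Section~\ref{sub: Cones and cylinders} that $Y \to \cone g$ is a cofibration whenever the source of $g$ is cofibrant, each $F_{p-1} \hookrightarrow F_p$ is a cofibration, and therefore $M \ba A$ is cofibrant.

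The isomorphism $M \ba N \cong (M \ba A) \otimes_A N$ comes from $A \otimes_A N \cong N$. On underlying graded modules both sides equal $\bigoplus_p (\Sigma^p (M \otimes_k A^{\otimes p} \otimes_k N))^\natural$, and under this identification, applying $- \otimes_A N$ to the realization differential of $M \ba A$ leaves the internal differentials and the face maps $d_0, \dots, d_{p-1}$ unchanged and turns the last face map $1_M \otimes 1_A^{p-1} \otimes \mu$ into $1_M \otimes 1_A^{p-1} \otimes \eta_N$ — precisely the realization differential of $M \ba N$. Both the isomorphism and $\varepsilon_M$ are natural in $M$ and $N$. Combined with the previous paragraph, it remains only to check that $\varepsilon_M$ is a quasi-isomorphism, and then $M \ba N \cong (M \ba A) \otimes_A N$ is a model for $M \otimes_A^{\mathbf{L}} N$.

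Finally, that $\varepsilon_M : M \ba A \to M$ is a quasi-isomorphism I would deduce from the standard extra-degeneracy argument. The augmented simplicial $k$-module $B_{(\bullet)}(M,A,A) \to M$ admits an extra degeneracy $s_{-1}(m \otimes a_1 \otimes \cdots \otimes a_t \otimes a) = m \otimes 1 \otimes a_1 \otimes \cdots \otimes a_t \otimes a$ given by insertion of the unit $1 \in A^0$; because $1$ is a cycle of degree $0$, $s_{-1}$ is strictly compatible with the internal differentials, so the usual contracting homotopy it provides in the simplicial direction assembles, with the suspension bookkeeping of the realization, into a chain contraction of the mapping cone of $\varepsilon_M$. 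Hence $\varepsilon_M$ is a chain homotopy equivalence, in particular a quasi-isomorphism. (Alternatively, the homology spectral sequence of the skeletal filtration of $M \ba A$ has, because $k$ is a field, $E^1$-term the bar complex of the graded algebra $H_*(A)$ with coefficients $H_*(M)$ and $H_*(A)$, so $E^2 = \tor^{H_*(A)}_*(H_*(M), H_*(A)) = H_*(M)$ is concentrated in filtration degree $0$; the spectral sequence then collapses and its edge homomorphism is $H_*(\varepsilon_M)$, provided the filtration is exhaustive and bounded below in each total degree, which holds under the standing hypotheses.) I expect this last step — making the purely simplicial contraction interact correctly with the total differential of the realization — to be the only point requiring genuine care, and the reason it works is exactly that $s_{-1}$ inserts an honest unit, so that no correction terms from the internal differentials intervene.
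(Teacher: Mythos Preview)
Your construction of the natural map $M \ba N \to M \otimes_A N$ is correct and agrees with the paper's; the paper phrases it as a map of simplicial objects $B_{(\bullet)}(M,A,N) \to (M \otimes_A N)_{(\bullet)}$ followed by the collapse $|(M \otimes_A N)_{(\bullet)}| \xrightarrow{\sim} M \otimes_A N$, but the resulting map is exactly the one you wrote down, as the remark immediately following the lemma confirms. The paper does not prove the derived-tensor-product claim at all, declaring it well known, so your argument (cofibrancy of $M \ba A$ via the skeletal filtration, the isomorphism $(M\ba A)\otimes_A N \cong M \ba N$, and acyclicity of the augmentation) supplies content the paper omits; the cofibrancy and isomorphism steps are fine.

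There is, however, a genuine error in your extra-degeneracy formula. The map you wrote,
\[
s_{-1}(m\otimes a_1\otimes\cdots\otimes a_t\otimes a) = m\otimes 1\otimes a_1\otimes\cdots\otimes a_t\otimes a,
\]
is not an extra degeneracy of the augmented object $B_{(\bullet)}(M,A,A)\to M$: it is the \emph{ordinary} degeneracy $s_0$ of the bar construction. One checks $d_0 s_{-1} = d_1 s_{-1} = 1$ and $d_i s_{-1}=s_{-1}d_{i-1}$ only for $i\geq 2$; in particular $d_1 s_{-1} \neq s_{-1} d_0$ (e.g.\ $d_1 s_{-1}(m\otimes b\otimes c)=m\otimes b\otimes c$ while $s_{-1}d_0(m\otimes b\otimes c)=mb\otimes 1\otimes c$), and the would-be contracting-homotopy identity collapses to $s_{-1}d_0$ rather than the identity. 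The extra degeneracy that \emph{does} work for $B_{(\bullet)}(M,A,A)$ inserts the unit on the \emph{right},
\[
m\otimes a_1\otimes\cdots\otimes a_t\otimes a \ \longmapsto\ m\otimes a_1\otimes\cdots\otimes a_t\otimes a\otimes 1,
\]
yielding $d_{t+1}s=1$ and $d_i s = s\, d_i$ for $i\leq t$; with this choice your sketch goes through exactly as you anticipated. (The left-hand insertion would instead contract $B_{(\bullet)}(A,A,N)\to N$.) Your alternative spectral-sequence argument is sound in spirit, but the convergence caveat you flag is real: the skeletal filtration need not be bounded below in each total degree for arbitrary $M$, so the corrected extra-degeneracy route is the robust one.
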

\begin{proof}
We shall only specify the natural map mentioned in the lemma. Note that $M \otimes_A N$ is the coequalizer of $d_0,d_1:M \otimes A \otimes N \rightrightarrows M \otimes N$. It is now obvious how to define a natural map of simplicial modules
\[ B_{(\bullet)}(M,A,N) \to (M\otimes_A N)_{(\bullet)} \, .\]
The natural map we are after is then the composition
\[ |B_{(\bullet)}(M,A,N)| \to |(M\otimes_A N)_{(\bullet)}| \xrightarrow{\sim} M \otimes_A N .\]
\end{proof}

\begin{remark}
A generic element of $M \ba N$ is denoted by $m[a]n$, where $m\in M$, $[a]=[a_1|\cdots|a_p] \in A^{\otimes p}$ and $n\in N$. The natural map $M \ba N \to M \otimes_A N$ is then
\[ m[a]n \ \mapsto  \
\begin{cases}
  m\otimes n & p=0, \\
  0 & \text{otherwise.}
 \end{cases}
\]
\end{remark}

\subsection{Properties of the bar construction}
We show that the bar construction is associative and has maps which can play the role of unit maps, although they are not isomorphisms.

\begin{definition}
Let $L$, $M$ and $N$ be $A^e$-modules. There is a natural isomorphism of $A^e$-modules $\alpha: (L \ba M) \ba N \to L \ba (M \ba N)$ which we now describe. A generic element of $(L \ba M) \ba N$ is denoted by $(l [a] m) [b] n$ where $l \in L$, $m \in M$, $n \in N$ and $[a]=[a_1|\cdots|a_p] \in A^{\otimes p}$ and $[b]=[b_1|\cdots|b_q] \in A^{\otimes q}$. Note that the degree of $[a]$ is $\deg a_1+\cdots +\deg a_p$ while the degree of $l [a] m$ is $\deg l + \deg [a] + \deg m +p$ and the degree of $(l [a] m) [b] n$ is $\deg l + \deg [a] + \deg m +p + \deg [b] + \deg n + q$. The isomorphism $\alpha$ is given by
\[ (l [a] m) [b] n \ \mapsto \ (-1)^{(\deg l + \deg [a] +p)q} \ l[a](m[b]n) . \]
Roughly speaking, the sign comes from interchanging the order in which we realize the bisimplicial $A^e$-module $B_{(\bullet)}(B_{(\bullet)}(L,A,M),A,N)$.
\end{definition}

Proof of the following lemma is a simple calculation of signs and degrees and is therefore omitted.
\begin{lemma}
The bar construction $\ba$ together with the associativity isomorphism $\alpha$  satisfy the associativity diagram (5) in \cite[VII.1]{MacLaneCategoriesBook}.
\end{lemma}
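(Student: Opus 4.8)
The diagram~(5) of \cite[VII.1]{MacLaneCategoriesBook} is MacLane's pentagon, so what must be checked is that, for any four $A^e$-modules $L,M,N,P$, the two canonical composites
\[ ((L\ba M)\ba N)\ba P \ \longrightarrow \ L\ba(M\ba(N\ba P)) \]
agree: the \emph{long} one $(1_L\ba\alpha)\circ\alpha\circ(\alpha\ba 1_P)$ and the \emph{short} one $\alpha\circ\alpha$. The plan is to evaluate both on a generic generator $((l[a]m)[b]n)[c]w$, with $[a]\in A^{\otimes r}$, $[b]\in A^{\otimes s}$, $[c]\in A^{\otimes t}$; since $\alpha$ only rebrackets and never permutes the entries $l,[a],m,[b],n,[c],w$, both composites will send this generator to $\pm\,l[a](m[b](n[c]w))$, and the entire content of the lemma is that the two signs coincide.

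Before running the sign chase I would record two reductions that make it routine. First, a degree-zero map $f$ of $A^e$-modules induces $f\ba 1$ and $1\ba f$ \emph{without} any sign: in each simplicial degree $f$ occupies the appropriate outer tensor slot, so the Koszul sign against the other factors is trivial, and commuting $f$ past the bar suspension coordinates contributes $(-1)^{0}=1$ as well. Consequently every sign below comes solely from the four applications of the defining formula $\alpha\big((l[a]m)[b]n\big)=(-1)^{(\deg l+\deg[a]+p)q}\,l[a](m[b]n)$. Second, I would keep in view that the degree of a bar element is $\deg(l[a]m)=\deg l+\deg[a]+\deg m+p$, the $+p$ from the suspensions being the one summand that is easy to drop.

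Carrying this out, the long composite picks up, in order, the exponents $(\deg l+\deg[a]+r)s$, then $(\deg l+\deg[a]+r)t$, then $(\deg m+\deg[b]+s)t$; the short composite picks up first $(\deg l+\deg[a]+\deg m+r+\deg[b]+s)t$ and then $(\deg l+\deg[a]+r)s$. Expanding the first of these using $\deg(l[a]m)=\deg l+\deg[a]+\deg m+r$, both totals become the same sum $(\deg l+\deg[a]+r)s+(\deg l+\deg[a]+r)t+(\deg m+\deg[b]+s)t$, so the two composites agree on every generator, hence as maps of $A^e$-modules. The only genuinely delicate point is exactly this sign accounting — in particular not forgetting the suspension contributions $r,s,t$ inside each exponent and inside $\deg(l[a]m)$ — and if the bare count became unwieldy I would instead identify $\alpha$ with the canonical interchange isomorphism between the two totalizations of the bisimplicial $A^e$-module $B_{(\bullet)}(B_{(\bullet)}(L,A,M),A,N)$ and appeal to the standard coherence of such interchange maps, in the spirit of the remark following the definition of $\alpha$.
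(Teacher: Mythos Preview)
Your proof is correct and follows exactly the approach the paper indicates: a direct calculation of signs and degrees, which the paper declares routine and omits. You have carried out in full the verification the paper leaves to the reader, and your sign accounting (including the suspension contributions $r,s,t$ inside $\deg(l[a]m)$) is accurate.
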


\begin{definition}
Define the left unit map $el_M:A \ba M \to M$ to be the composition $A \ba M \xrightarrow{\sim} A \otimes_A M \xrightarrow{\cong} M$. Clearly this map is an equivalence. We define the right unit map $er_M:M \ba A \to A$ similarly. It is easy to see that $el_A=er_A:A\ba A \to A$. We will usually denote both units by $e$. We caution the reader that these unit maps do not satisfy diagram (7) from \cite[VII.1]{MacLaneCategoriesBook}.
\end{definition}

The following properties are clear.
\begin{lemma}
The bar construction preserves equivalences and short exact sequences in either variable. The bar construction is bilinear in the following sense: let $f,g:M \to N$ and $h:X \to Y$ be maps of $A$-bimodules then $(f+g)\ba h = (f\ba h) +  (g \ba h)$ and $h\ba (f+g) = (h\ba f) + (h\ba g)$.
\end{lemma}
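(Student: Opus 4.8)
The plan is to reduce each assertion to two elementary facts: since $k$ is a field, $-\otimes_k V$ is an exact functor that preserves quasi-isomorphisms for every $k$-module $V$; and the realization functor $X_{(\bullet)}\mapsto|X_{(\bullet)}|$ is both exact and additive, because the underlying graded $A^\natural$-module of $|X_{(\bullet)}|$ is $\bigoplus_p(\Sigma^p X_{(p)})^\natural$ and both $\bigoplus_p$ and $\Sigma^p$ are exact and additive.

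I would begin with bilinearity, which is purely formal. On $B_{(t)}(M,A,X)=M\otimes_k A^{\otimes t}\otimes_k X$ the map induced by $f+g:M\to N$ and $h:X\to Y$ is $(f+g)\otimes 1_A^{\otimes t}\otimes h$, which by additivity of $\otimes_k$ equals $(f\otimes 1_A^{\otimes t}\otimes h)+(g\otimes 1_A^{\otimes t}\otimes h)$; hence $B_{(\bullet)}(f+g,A,h)=B_{(\bullet)}(f,A,h)+B_{(\bullet)}(g,A,h)$ as maps of simplicial $A$-modules, and the same holds in the other slot. Applying the additive functor $|-|$ gives $(f+g)\ba h=(f\ba h)+(g\ba h)$ and $h\ba(f+g)=(h\ba f)+(h\ba g)$.

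For short exact sequences, given $0\to M'\to M\to M''\to 0$ and a module $N$, exactness of $-\otimes_k A^{\otimes t}\otimes_k N$ yields a short exact sequence of simplicial $A$-modules $0\to B_{(\bullet)}(M',A,N)\to B_{(\bullet)}(M,A,N)\to B_{(\bullet)}(M'',A,N)\to 0$, and exactness of $|-|$ turns this into $0\to M'\ba N\to M\ba N\to M''\ba N\to 0$; the second variable is identical. For preservation of equivalences, if $f:M\xrightarrow{\sim}M'$ is a quasi-isomorphism then $f\otimes 1_A^{\otimes t}\otimes 1_N$ is one for every $t$ by the Künneth theorem over a field, so $B_{(\bullet)}(f,A,N)$ is a degreewise equivalence of simplicial $A$-modules. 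To pass to realizations I would use the skeletal filtration $F_q|X_{(\bullet)}|=\bigoplus_{p\le q}(\Sigma^p X_{(p)})^\natural$; since the face maps lower the simplicial degree while $\partial^{X_{(p)}}$ preserves it, each $F_q$ is a subcomplex, with $F_q/F_{q-1}\cong\Sigma^q X_{(q)}$ and $|X_{(\bullet)}|=\colim_q F_q$. By induction on $q$, the short exact sequences $0\to F_{q-1}\to F_q\to\Sigma^q X_{(q)}\to 0$ together with the five lemma show that each map $F_q B_{(\bullet)}(f,A,N)$ is an equivalence, and since homology commutes with the sequential colimit it follows that $M\ba N\to M'\ba N$ is an equivalence; the second variable is the same.

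I expect the only genuine point of care to be this last induction, but even there the content is just the five lemma plus the fact that homology commutes with filtered colimits of chain complexes; since the skeletal filtration is an honest increasing filtration by subcomplexes, no convergence subtlety arises, which is why all three properties can legitimately be stated as being clear.
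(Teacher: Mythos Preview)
Your argument is correct; the paper does not actually prove this lemma, introducing it with ``The following properties are clear'' and giving no further justification. Your write-up is exactly the standard unpacking of why these properties hold, and the only nontrivial step---passing from a degreewise quasi-isomorphism of simplicial modules to a quasi-isomorphism of realizations via the skeletal filtration, the five lemma, and commutation of homology with sequential colimits---is handled correctly.
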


\subsection{Signs}
\label{sub: Signs}
For $k$-modules $X$ and $Y$ there are natural isomorphisms $\Sigma(X\otimes Y) \cong (\Sigma X) \otimes Y$ and $\Sigma(X\otimes Y) \cong X \otimes (\Sigma Y)$. The first isomorphism is simply $\Sigma(x\otimes y)\mapsto (\Sigma x)\otimes y$ while the second isomorphism is $\Sigma(x\otimes y)\mapsto (-1)^{|x|}x\otimes (\Sigma y)$. These standard isomorphisms induce in an obvious way isomorphisms of $A^e$-modules $\Sigma(X \ba Y) \cong (\Sigma X) \ba Y$ and $\Sigma(X \ba Y) \cong X \ba (\Sigma Y)$. From now on these are the isomorphisms we shall use (at times implicitly) whenever we consider a map of $A^e$-modules $X \otimes (\Sigma Y) \to M$ as an element of $\ext^0_{A^e}(\Sigma (X \ba Y),M) = \ext^{-1}_{A^e}(X \ba Y,M)$.

Similarly there are standard natural isomorphisms $(\cone X) \otimes Y \cong \cone(X \otimes Y) \cong X \otimes (\cone Y)$ which give a commutative diagram
\[ \xymatrixcompile{
{\cone(X) \otimes Y} \ar@{<->}[r]^\cong \ar[d]& {\cone(X \otimes Y)} \ar@{<->}[r]^\cong \ar[d] & { X \otimes (\cone Y)} \ar[d]\\
{\Sigma(X) \otimes Y} \ar@{<->}[r]^\cong & {\Sigma(X \otimes Y)} \ar@{<->}[r]^\cong & { X \otimes (\Sigma Y)} .
}\]

Now consider the pushout $Q=\cone X \coprod_X \cone X$. Clearly $Q$ is equivalent to $\Sigma X$. Indeed there are two natural equivalences $l:Q \to \cone X \coprod_X 0 = \Sigma X$ and $r:Q \to 0 \coprod_X \cone X = \Sigma X$ coming from maps of the appropriate pushout diagrams. It is also easy to see there is a natural map $\xi: \Sigma X \to Q$ such that $l\xi=-1$ while $r\xi=1$.

Now let $P=(\cone X) \otimes Y \coprod_{X \otimes Y} X \otimes (\cone Y)$, then similarly there are two natural equivalences $l:P \to (\Sigma X) \otimes Y$ and $r:P \to X \otimes (\Sigma Y)$ coming from maps of the appropriate pushout diagrams. Combining the map $\xi$ mentioned above with the various standard isomorphisms yields maps
\[ \Sigma(X \otimes Y) \to \cone (X \otimes Y) \coprod_{X \otimes Y} \cone (X \otimes Y) \xrightarrow{\cong} (\cone X) \otimes Y \coprod_{X \otimes Y} X \otimes (\cone Y) = P,\]
we denote this composition by $\zeta$. Clearly, $r\zeta$ is the standard isomorphism while $l\zeta$ is -1 times the standard isomorphism. Moreover, there is the following natural short exact sequence
\[ \xymatrixcompile{
{\Sigma (X\otimes Y)} \ar[d]^{\zeta} \ar[r] & {\cone \Sigma (X\otimes Y)} \ar[d] \ar[r] & {\Sigma^2 (X\otimes Y)} \ar[d]^\cong \\
{P} \ar[r] & {\cone X \otimes \cone Y} \ar[r] & {\Sigma X \otimes \Sigma Y}
}\]
where the rightmost vertical isomorphism is the standard one.

These properties extend to the bar construction $\ba$ in an obvious manner, and so we give the following lemma without proof.
\begin{lemma}
\label{lem: Signs}
Let $X$ and $Y$ be $A^e$-modules. Then there exists a natural morphism of short exact sequences of $A^e$-modules
\[ \xymatrixcompile{
{\Sigma (X\ba Y)} \ar[d]^{\zeta} \ar[r] & {\cone \Sigma (X\ba Y)} \ar[d] \ar[r] & {\Sigma^2 (X\ba Y)} \ar[d]^\cong \\
{(\cone X) \ba Y \coprod_{X \ba Y} X \ba (\cone Y)} \ar[r] & {\cone X \ba \cone Y} \ar[r] & {\Sigma X \ba \Sigma Y}
}\]
where the rightmost vertical map is the standard isomorphism. In addition, the composition $\Sigma (X\ba Y) \xrightarrow{\zeta} (\cone X) \ba Y \coprod_{X \ba Y} X \ba (\cone Y) \to X \ba (\Sigma Y)$ is the standard isomorphism, while $\Sigma (X\ba Y) \xrightarrow{\zeta} (\cone X) \ba Y \coprod_{X \ba Y} X \ba (\cone Y) \to (\Sigma X) \ba Y$ is -1 times the standard isomorphism.
\end{lemma}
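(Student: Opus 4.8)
The plan is to deduce the lemma from the purely $k$-module computations already carried out in Subsection~\ref{sub: Signs}, exploiting the fact that every construction appearing in the statement is assembled from the tensor product over $k$, the two fixed complexes $\cone k$ and $\Sigma k$, realization, and finite colimits. Write $Z = X \ba Y$.

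First I would record the bookkeeping observation that, for a fixed $k$-complex $C$, the functor $C \otimes -$ on $A^e$-modules commutes with realization of simplicial $A^e$-modules — this is precisely what the sign $(-1)^p\partial^{X_{(p)}}$ in the definition of realization in Section~\ref{sec: Differential graded preliminaries} is there to arrange — and that it obviously preserves the face maps of the bar construction. Consequently the levelwise identifications $B_{(t)}(\cone U, A, V) = \cone k \otimes B_{(t)}(U,A,V)$ and $B_{(t)}(U,A,\cone V) \cong \cone k \otimes B_{(t)}(U,A,V)$ (the second only up to the Koszul sign of commuting $\cone k$ past $U \otimes A^{\otimes t}$) realize to natural isomorphisms of $A^e$-modules $(\cone U)\ba V \cong \cone k \otimes (U \ba V)$ and $U \ba (\cone V) \cong \cone k \otimes (U \ba V)$, and likewise $\cone U \ba \cone V \cong (\cone k \otimes \cone k)\otimes(U\ba V)$, with the evident analogues for $\Sigma$ in place of $\cone$. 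Since realization is a total complex it commutes with the finite colimits building the pushouts $Q = \cone k \coprod_k \cone k$ and $P$ of Subsection~\ref{sub: Signs}, so the two displayed isomorphisms fit together into a natural isomorphism $(\cone X)\ba Y \coprod_{X\ba Y} X\ba(\cone Y) \cong Q \otimes Z$.

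The remaining step is then formal. I would take the morphism of short exact sequences of $k$-modules constructed in Subsection~\ref{sub: Signs}, specialised to the pair $(X,Y) = (k,k)$ — so that $P$ becomes $Q$ and $\zeta$ becomes the map $\xi\colon \Sigma k \to Q$ with $l\xi = -1$ and $r\xi = 1$ — and apply the exact functor $- \otimes Z$ to it. Transporting source and target along the isomorphisms of the previous step identifies the result with the diagram in the statement, with $\zeta = \xi \otimes 1_Z$; matching the collapse maps $l$, $r$ on $Q$ with the two quotient maps of $Q \otimes Z$ onto $(\Sigma X)\ba Y$ and $X\ba(\Sigma Y)$, the two claimed compositions become $(l\xi)\otimes 1_Z = -1$ times the standard isomorphism and $(r\xi)\otimes 1_Z = $ the standard isomorphism, while the rightmost vertical map is the standard isomorphism tensored with $1_Z$, i.e. the standard isomorphism $\Sigma^2(X\ba Y) \cong \Sigma X \ba \Sigma Y$. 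Naturality in $X$ and $Y$ is inherited from that of all the constituent isomorphisms.

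The hard part is entirely in the first step: checking that the isomorphism $\cone X \ba \cone Y \cong (\cone k \otimes \cone k)\otimes Z$ and its $\Sigma$-analogue are chain maps for the realization differential and are compatible with the structure maps of both short exact sequences. Commuting a copy of $\cone k$ or $\Sigma k$ past $A^{\otimes p}$ and past the $\Sigma^p$ of the realization introduces Koszul signs, and one must verify that they assemble into exactly the signs built into $\zeta$ in Subsection~\ref{sub: Signs} — in particular the factor $(-1)^{|x|}$ in the isomorphism $\Sigma(X \otimes Y) \cong X \otimes \Sigma Y$. This verification is routine but sign-heavy, which is presumably why the lemma is stated without proof; once it is carried out, the rest of the argument is purely formal.
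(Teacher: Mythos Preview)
The paper gives this lemma \emph{without proof}, prefacing it only with the remark that the properties established for $\otimes$ in Subsection~\ref{sub: Signs} ``extend to the bar construction $\ba$ in an obvious manner''. Your proposal spells out exactly this obvious extension: recognize that $\cone(-)$, $\Sigma(-)$, and the relevant pushout are all given by tensoring with fixed $k$-complexes, observe that such tensoring commutes (up to Koszul signs) with realization and hence with $\ba$, and conclude that the entire diagram is obtained from the $k$-module diagram of Subsection~\ref{sub: Signs} by applying $-\otimes Z$ with $Z = X\ba Y$. This is correct and is precisely the argument the paper has in mind; there is nothing to compare beyond noting that you have written out what the author left implicit.
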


\begin{remark}
The signs here are analogous to the topological choice of orientation on the boundary of a manifold, compare \cite[Remark 2.2]{DuggerMultiplicative1}.
\end{remark}

\section{Construction of the spectral sequence}
\label{sec: Construction of the spectral sequence}

\subsection{Filtering the dga}
\label{sub: Filtering the dga}
Let $J(n)$ to be the sub-complex
\[0\to \cdots \to 0 \to A^n \to A^{n+1} \to A^{n+1} \to \cdots\]
of $A$. Clearly $J(n)$ is an $A$-bimodule and we have a filtration of $A$ by $A$-bimodules
\begin{equation}
\label{equ: filtration of A}
\cdots \to J(n) \xrightarrow{\iota} J(n-1) \xrightarrow{\iota} \cdots \to J(0)=A .
\end{equation}
There are also short exact sequences of $A$-bimodules
\begin{equation}
\label{equ: triangle for J(n)}
0 \to J(n+1) \xrightarrow{\iota} J(n) \xrightarrow{\theta} \Sigma^{-n} A^n \to 0 .
\end{equation}
The tower (\ref{equ: filtration of A}) induces morphisms
\begin{equation}
\cdots \to \ext_{A^e}^*(A,J(n)) \xrightarrow{\iota} \ext_{A^e}^*(A,J(n-1)) \to \cdots \to \ext_{A^e}^*(A,A)
\end{equation}
and the short exact sequences (\ref{equ: triangle for J(n)}) induce long exact sequences sequences:
\begin{multline}
\cdots \to \ext^t_{A^e}(A,J(n+1)) \xrightarrow{\iota} \ext^t_{A^e}(A,J(n)) \xrightarrow{\theta} \ext^t_{A^e}(A,\Sigma^{-n} A^n)\\ \xrightarrow{\kappa} \ext^{t+1}_{A^e}(A,J(n+1)) \to \cdots
\end{multline}
The indeterminacy of the connecting homomorphism can cause us trouble when trying to work out signs. Therefore we define $\kappa:\Sigma^{-n} A^n \to \Sigma J(n+1)$ to be the morphism in $\Derived(A^e)$ represented by the obvious maps $\Sigma^{-n} A^n \to \cone \theta \xleftarrow{\sim} \Sigma J(n+1)$.

\subsection{The spectral sequence}
The spectral sequence we build shall be homologically graded, hence we set:
\begin{align*}
D^1_{p,q}&= \ext^{-p-q}_{A^e}(A,J(-p))  
\\
E^1_{p,q}&= \ext^{-p-q}_{A^e}(A,\Sigma^{p}A^{-p}) = \ext^{-q}_{A^e}(A,A^{-p}) . 
\end{align*}
The morphisms $\kappa$,  $\iota$ and $\theta$ now become
\begin{align*}
E^1_{p,q} &\xrightarrow{\kappa} D^1_{p-1,q}\\
D^1_{p,q} &\xrightarrow{\iota} D^1_{p+1,q-1}\\
D^1_{p,q} &\xrightarrow{\theta} E^1_{p,q} .
\end{align*}
Together these yield an exact couple which gives rise to the desired spectral sequence $(E^r,d^r)$.

\begin{lemma}
\label{lem: Conditional convergence}
Let $A$ be a coconnective augmented $k$-dga. Then the spectral sequence constructed above conditionally converges (see~\cite[Definition 5.10]{BoardmanCCSpecSeq}) to $\ext^{-p-q}_{A^e}(A,A)$.
\end{lemma}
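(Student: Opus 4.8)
The exact couple assembled from the maps $\iota$, $\theta$ and $\kappa$ produces the spectral sequence $(E^r,d^r)$ in the standard way, so the only thing to prove is the convergence statement. The plan is to present the construction as an unrolled exact couple
\[ \cdots \longrightarrow D^1_{p,q} \xrightarrow{\ \iota\ } D^1_{p+1,q-1} \xrightarrow{\ \iota\ } \cdots , \qquad D^1_{p,q}=\ext^{-p-q}_{A^e}(A,J(-p)) , \]
and to verify Boardman's conditions \cite[Definition 5.10]{BoardmanCCSpecSeq}: such a spectral sequence converges conditionally to the colimit of the $D^1$-terms along $\iota$ provided that both the inverse limit and the derived inverse limit of the $D^1$-terms, taken at the opposite end of the tower, vanish. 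Since $A$ is coconnective, $J(n)=A$ for all $n\le 0$ (extending the tower to all integers by $J(n)=A$ for negative $n$ changes nothing), so the colimit of the $D^1$-terms along $\iota$ is $\ext^{-p-q}_{A^e}(A,A)$, which is the claimed target. It therefore remains to prove that, for every integer $t$,
\[ \lim_n \ext^t_{A^e}(A,J(n)) = 0 \qquad\text{and}\qquad {\lim_n}^{1} \ext^t_{A^e}(A,J(n)) = 0 , \]
the limits being taken along the inclusions $\iota\colon J(n+1)\hookrightarrow J(n)$ as $n\to\infty$.

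Both of these vanishings I would deduce at once from the claim that $\holim_n J(n)$ is contractible in $\Derived(A^e)$. To see the claim, note that $J(n)^m=A^m$ when $n\le m$ and $J(n)^m=0$ when $n>m$, so in each cohomological degree the tower $\{J(n)^m\}_n$ is eventually zero; hence $\lim_n J(n)^m=0$ and ${\lim_n}^{1} J(n)^m=0$ for every $m$. It follows that the map $1-\sigma\colon\prod_n J(n)\to\prod_n J(n)$, with $\sigma$ induced by the inclusions $\iota$, is an isomorphism in every degree, hence an isomorphism of $A^e$-modules, so that its fibre is contractible; since every $A^e$-module is fibrant, this fibre is precisely $\holim_n J(n)$. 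Applying $\ext^t_{A^e}(A,-)=\hom_{\Derived(A^e)}(\Sigma^{-t}A,-)$ and invoking the Milnor short exact sequence for the homotopy limit of a tower in a triangulated category with countable products,
\[ 0 \longrightarrow {\lim_n}^{1}\ext^{t-1}_{A^e}(A,J(n)) \longrightarrow \ext^t_{A^e}(A,\holim_n J(n)) \longrightarrow \lim_n \ext^t_{A^e}(A,J(n)) \longrightarrow 0 , \]
we find that the middle term is $\ext^t_{A^e}(A,0)=0$, and hence both outer terms vanish for every $t$, which is what was needed.

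Together with Boardman's criterion this gives the asserted conditional convergence to $\ext^{-p-q}_{A^e}(A,A)$. The step I expect to require the most care is not any single computation but rather the bookkeeping around the homotopy limit: namely, identifying $\holim_n J(n)$ with the fibre of $1-\sigma$ on $\prod_n J(n)$ and producing the Milnor sequence for $\hom_{\Derived(A^e)}(\Sigma^{-t}A,-)$. Both are standard once one uses that every $A^e$-module is fibrant and that products of $A^e$-modules are exact; after that, the argument is merely a matter of matching our indexed exact couple to the hypotheses of \cite[\S5]{BoardmanCCSpecSeq}.
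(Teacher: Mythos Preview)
Your proof is correct and follows essentially the same route as the paper: extend the tower by $J(n)=A$ for $n\le 0$, show $\holim_n J(n)\simeq 0$, feed this into a Milnor sequence to get $\lim=\lim^1=0$ on the $D^1$-terms, and invoke Boardman. The only cosmetic difference is that the paper verifies $\holim_n J(n)\simeq 0$ by observing that the homology tower $\{H_q(J(n))\}_n$ satisfies the trivial Mittag--Leffler condition, whereas you argue one step lower, at the chain level, that $1-\sigma$ is a degreewise isomorphism on $\prod_n J(n)$; both arguments exploit the same eventual vanishing and yield the same conclusion.
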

\begin{proof}
To prove the lemma we need to extend our filtration of $A$ by setting $J(-p)=A$ for all $p>0$. Now we have a similar spectral sequence, only $D^1_{p,q} = \ext^{-p-q}_{A^e}(A,A)$ for $p>0$ (this leaves the $E^1$-term unchanged).

Fix an index $q$ and consider the tower of graded abelian groups $M(p)=H_q(J(p))$. This tower satisfies the trivial Mittag-Leffler condition and therefore $\lim_p M(p) = \lim^1 M(p) = 0$. From this we conclude that the homotopy limit $\holim_p J(p)$ is equivalent to zero. Since
\[\holim_p \mathbf{R}\Hom_{A^e}(A,J(p)) \simeq \mathbf{R}\Hom_{A^e}(A,\holim_p J(p)) \simeq 0 , \]
then by a Milnor type short exact sequence (see for example \cite[Theorem 4.9]{BoardmanCCSpecSeq}) we see that $\lim_{p\to-\infty} D^1_{p,*} = \lim^1_{p\to-\infty} D^1_{p,*} = 0$.
Thus our spectral sequence converges conditionally to $\colim_{p\to \infty} D^1_{p,*}=\ext^{-p-q}_{A^e}(A,A)$.
\end{proof}

\section{The multiplicative structure}
\label{sec: The multiplicative structure}

\subsection{The pairing on the filtration}
The multiplication map $A\otimes_k A \to A$ yields an associative pairing
\[ J(n) \otimes_k J(m) \xrightarrow{\phi_{n,m}} J(n+m).\]
Note that the maps $\phi_{n,m}$ are maps of $A$-bimodules, where the bimodule structure on $J(n) \otimes_k J(m)$ comes from the left $A$-module structure on $J(n)$ and the right $A$-module structure on $J(m)$. This is not sufficient, however, since we need to construct a pairing of $A$-bimodules:
\[ J(n) \ba J(m) \xrightarrow{\psi_{n,m}} J(n+m) . \]

Let $J(n+m)_{(\bullet)}$ be the constant simplicial bimodule $J(n+m)$. Consider the map of simplicial of $A$-bimodules
\[\psi_{(\bullet),n,m}:B_{(\bullet)}(J(n),A,J(m)) \to J(n+m)_{(\bullet)}\]
which is given by the obvious multiplication map
\[ \psi_{(t),n,m}: J(n) \otimes_k A^{\otimes t} \otimes_k J(m) \to J(n+m) . \]
This results in a map of simplicial $A$-bimodules because the morphisms $\psi_{(t),m,n}$ commute with the face (and degeneracy) maps of $B_{(\bullet)}(J(n),A,J(m))$.

Upon taking realization of both simplicial bimodules we get a map $|\psi_{(\bullet),n,m}|:J(n) \ba J(M) \to |J(n+m)_{(\bullet)}|$. Composing this map with the natural weak equivalence $|J(n+m)_{(\bullet)}|\xrightarrow{\simeq} J(n+m)$ yields
\[ \psi_{n,m}: J(n) \ba J(M) \to J(n+m),\]
which is the pairing we need. We will omit the subscripts $n$ and $m$ whenever they are clear from the context. Note that $\psi_{0,0}$ is the unit map $e$.

\begin{lemma}
\label{lem: Pairing psi commutes with iota}
\[ \psi(\iota\ba 1)=\iota \psi = \psi(1\ba \iota) . \]
\end{lemma}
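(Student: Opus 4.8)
The plan is to unwind both composites through the simplicial-level description of $\psi$ and reduce everything to the single commutation fact that $\iota$ is a map of $A$-bimodules. Recall $\psi_{n,m}$ is built as the realization $|\psi_{(\bullet),n,m}|$ of the map of simplicial bimodules $\psi_{(\bullet),n,m}\colon B_{(\bullet)}(J(n),A,J(m)) \to J(n+m)_{(\bullet)}$, postcomposed with the natural equivalence $|J(n+m)_{(\bullet)}| \xrightarrow{\simeq} J(n+m)$. Since realization is a functor and the final equivalence is natural, it suffices to prove both equalities one simplicial level at a time, i.e. to show $\psi_{(t)}(\iota \otimes 1_A^{\otimes t} \otimes 1) = \iota\, \psi_{(t)} = \psi_{(t)}(1 \otimes 1_A^{\otimes t} \otimes \iota)$ as maps $J(n) \otimes_k A^{\otimes t} \otimes_k J(m) \to J(n+m)$. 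Here I should be slightly careful: the map $\iota \ba 1$ on the bar construction is itself $|B_{(\bullet)}(\iota, A, 1)|$, so naturality of realization really does reduce the claim to the three simplicial identities; and $\iota \psi$ means the realization composite $|\psi_{(\bullet),n,m}|$ followed by $|\iota_{(\bullet)}|$ on the constant simplicial objects, which is again handled levelwise.

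Next I would observe that at simplicial level $t$ the map $\psi_{(t),n,m}$ is literally induced by the multiplication $A^{\otimes(t+2)} \to A$ (restricted to $J(n)$ and $J(m)$ in the outer slots), landing in $J(n+m)$ because $J(\bullet)$ is multiplicatively closed. So $\psi_{(t)}(\iota \otimes 1 \otimes \cdots \otimes 1 \otimes 1)$ sends $a\,[b_1|\cdots|b_t]\,c$ (with $a \in J(n+1)$) to the product $\iota(a) b_1 \cdots b_t c \in J(n+m)$, where $\iota(a)$ just denotes $a$ viewed inside $J(n)$. On the other hand $\iota\,\psi_{(t)}$ sends the same element to $\iota(a b_1 \cdots b_t c)$, the product $a b_1 \cdots b_t c \in J(n+m)$ viewed inside $J(n+m)$ via $\iota$. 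These agree simply because $\iota$ is an inclusion of sub-bimodules of $A$ compatible with the multiplication — equivalently, $\iota$ is a map of $A$-bimodules and all three products are computed in $A$ itself before landing in the appropriate $J(\bullet)$. The identity $\psi_{(t)}(1 \otimes \cdots \otimes 1 \otimes \iota) = \iota\,\psi_{(t)}$ is symmetric, using instead that $c \in J(m+1) \hookrightarrow J(m)$.

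Finally I would note there is essentially no sign or degree subtlety here, unlike in the associativity and unit lemmas: the maps $\iota$ have degree zero and the face maps of the bar construction are untouched, so the realization differential $\partial$ from the definition of $|{-}|$ is preserved on the nose, and the argument does not interact with the signs of Section~\ref{sub: Signs}. The only thing that genuinely needs checking — and it is the single substantive point — is that $\psi_{(\bullet),n,m}$ really is a map of simplicial bimodules and that $\iota$ commutes with the relevant face maps, which is exactly the content already invoked when $\psi$ was constructed (the $\psi_{(t),m,n}$ commute with the face maps, and $d_0,d_t$ use the module structure maps while $\iota$ is a module map). So the main obstacle is purely bookkeeping: being precise about what ``$\iota \ba 1$'' and ``$\iota \psi$'' mean as realizations and confirming that passing to realization commutes with the three levelwise identities — there is no real mathematical difficulty, which is presumably why the paper states the lemma without proof.
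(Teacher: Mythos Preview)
Your approach is essentially identical to the paper's: both reduce to the levelwise commutativity $\psi_{(t)}(\iota \otimes 1) = \iota\,\psi_{(t)}$ and then invoke functoriality of realization together with naturality of $|J(\bullet)_{(\bullet)}| \xrightarrow{\simeq} J(\bullet)$. Two small slips to clean up: the source at level $t$ should be $J(n+1)\otimes_k A^{\otimes t}\otimes_k J(m)$ (not $J(n)\otimes\cdots$), and the paper does in fact give a short proof rather than omitting it.
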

\begin{proof}
We will only show $\psi(\iota\ba 1)=\iota \psi$, the proof of the second equality being similar. Clearly, the diagram below commutes
\[ \xymatrixcompile{
{B_{(p)}\big(J(s),A,J(t)\big)} \ar[rr]^{\iota \otimes 1} \ar[d] & &
 {B_{(p)}\big(J(s-1),A,J(t)\big)} \ar[d] \\
{J(s+t)} \ar[rr]^\iota & & {J(s+t-1)},
}\]
which implies the following diagram of $A$-modules commutes
\[ \xymatrixcompile{
{J(s) \ba J(t)} \ar[rr]^{\iota \ba 1} \ar[d]^{\psi_{s,t}} & &
 {J(s-1) \ba J(t)} \ar[d]^{\psi_{s-1,t}} \\
{|J(s+t)_{(\bullet)}|} \ar[rr]^{|\iota|} \ar[d]^\simeq & & {|J(s+t-1)_{(\bullet)}|} \ar[d]^\simeq \\
{J(s+t)} \ar[rr]^\iota & & {J(s+t-1)}.
}\]
\end{proof}

\subsection{The pairing on $\mathbf{D^1}$}
\label{sub: Pairing on D1}
We now construct a pairing on the $D^1$ term of the exact couple. Define maps
\[ \bar{\psi}_{n,m}: \ext^*_{A^e}(A, J(n)) \otimes \ext^*_{A^e}(A, J(m)) \to \ext^*_{A^e}(A, J(n+m))\]
in the following way: given $f\in \ext^{-s}_{A^e}(A,J(n))$ and $g\in\ext^{-t}_{A^e}(A,J(m))$ let $\bar{\psi}_{n,m}(f\otimes g)$ be the composition
\[ \Sigma^{s+t}A \xrightarrow{\cong} \Sigma^s A\ba \Sigma^t A \xrightarrow{f\ba g} J(n) \ba J(m) \xrightarrow{\psi_{n,m}} J(n+m)\]
in $\Derived(A^e)$, where the leftmost isomorphism is $e^{-1}$. Bilinearity of the bar construction $\ba$ allows us to extend this to a pairing on $\ext^*_{A^e}(A, J(n)) \otimes_k \ext^*_{A^e}(A, J(m))$.

For what follows we shall need a concrete representation of this pairing on the category of $A^e$-modules. First, we must fix a cofibrant replacement of $A$ as an $A^e$-module. Since $A\ba A$ is cofibrant we define $e:A\ba A \to A$ to be our cofibrant replacement. We will usually denote $A\ba A$ by $\tilde{A}$. An element $f\in \ext^{-s}_{A^e}(A,J(n))$ is now represented by the following maps of $A^e$-modules: $\Sigma^{s}A \xleftarrow{e} \Sigma^{s}\tilde{A} \xrightarrow{\tilde{f}} J(n)$, where $\tilde{f}$ represents $fe^{-1}$. We shall usually omit $e$ from the description, simply saying $\tilde{f}$ represents $f$.

Given another element $g\in\ext^{-t}_{A^e}(A,J(m))$ we choose a map of $A^e$-modules $\tilde{g}:\Sigma^{t} \tilde{A} \to J(m)$ representing $g$. Now we have the following maps of $A^e$-modules
\[ \xymatrixcompile{
{\Sigma^{s+t} A} & {\Sigma^{s} A \ba \Sigma^{t} A} \ar[l]^\sim_{e} & {\Sigma^{s} \tilde{A}\ba \Sigma^{t} \tilde{A}} \ar[l]_{e \ba e}^\sim \ar[r]^-{\tilde{f} \ba \tilde{g}}  & {J(n) \ba J(m)} \ar[r]^{\psi_{n,m}} & {J(n+m)}. }\]
It is easy to see that $\bar{\psi}_{n,m}(f\otimes g)$ is equal to the composition $\psi_{n,m} (\tilde{f} \ba \tilde{g})(e\ba e)^{-1} e^{-1}$ in $\Derived(A^e)$.

>From now on when we say that a map $f:\tilde{A} \ba \tilde{A} \to X$ represents an element of $\ext_{A^e}^*(A,X)$ we implicitly refer to the diagram $A \xleftarrow{e} \tilde{A} \xleftarrow{e\ba e} \tilde{A} \ba \tilde{A} \xrightarrow{f} X$. Thus, we have just shown that $\psi_{n,m} (\tilde{f} \ba \tilde{g})$ represents $\bar{\psi}_{n,m}(f\otimes g)$.

\begin{lemma}
\label{lem: Associativity of pairing psi}
The pairing $\psi$ is associative and so is the induced pairing $\bar{\psi}$.
\end{lemma}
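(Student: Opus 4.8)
The plan is to deduce associativity of both pairings from the strict associativity of the multiplication on $A$, carried through the bar construction.

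First I would prove that $\psi$ is associative, in the sense that
\[ \psi_{n+m,l}\circ(\psi_{n,m}\ba 1_{J(l)}) \;=\; \psi_{n,m+l}\circ(1_{J(n)}\ba\psi_{m,l})\circ\alpha \]
as maps $(J(n)\ba J(m))\ba J(l)\to J(n+m+l)$, where $\alpha$ is the associativity isomorphism for $\ba$. By construction $\psi_{n,m}$ is the realization of the simplicial pairing $\psi_{(\bullet),n,m}\colon B_{(\bullet)}(J(n),A,J(m))\to J(n+m)_{(\bullet)}$ followed by the canonical equivalence $|J(n+m)_{(\bullet)}|\xrightarrow{\sim}J(n+m)$, and in each simplicial degree $\psi_{(\bullet),n,m}$ is just the multiplication map. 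Iterating, both composites above are obtained from the bisimplicial $A^e$-module $B_{(\bullet)}(B_{(\bullet)}(J(n),A,J(m)),A,J(l))$ by applying, in each bidegree $(p,q)$, the multiplication map $J(n)\otimes A^{\otimes p}\otimes J(m)\otimes A^{\otimes q}\otimes J(l)\to J(n+m+l)$ — which, since the multiplication on $A$ and the maps $\phi_{r,s}$ are strictly associative, is independent of the bracketing — and then realizing, the only difference between the two sides being the order in which the two simplicial directions are realized. Since $\alpha$ is by definition precisely the sign-corrected isomorphism interchanging these two orders, and the canonical augmentation of the constant bisimplicial module $J(n+m+l)_{(\bullet,\bullet)}$ is compatible with either order of realization, the two composites agree. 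The sign bookkeeping required here is of exactly the same nature as in the coherence calculation for $\alpha$ recorded (without proof) above, and I would dispatch it in the same way.

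Next I would deduce associativity of $\bar{\psi}$ from that of $\psi$. Using the concrete description of \ref{sub: Pairing on D1}, for $f\in\ext^{-s}_{A^e}(A,J(n))$, $g\in\ext^{-t}_{A^e}(A,J(m))$ and $h\in\ext^{-u}_{A^e}(A,J(l))$ the morphism $\bar{\psi}_{n,m}(f\otimes g)$ equals $\psi_{n,m}\circ(f\ba g)\circ e^{-1}$ in $\Derived(A^e)$ (with the standard suspension isomorphisms for $\ba$ understood). Expanding $\bar{\psi}_{n+m,l}(\bar{\psi}_{n,m}(f\otimes g)\otimes h)$ and $\bar{\psi}_{n,m+l}(f\otimes\bar{\psi}_{m,l}(g\otimes h))$ by functoriality of $\ba$, then applying the associativity identity for $\psi$ and the naturality of $\alpha$ to rewrite $\alpha\circ((f\ba g)\ba h)$ as $(f\ba(g\ba h))\circ\alpha$, the claim reduces to the single identity
\[ \alpha\circ(e^{-1}\ba 1_A)\circ e^{-1} \;=\; (1_A\ba e^{-1})\circ e^{-1}\colon A\to A\ba(A\ba A) \]
in $\Derived(A^e)$. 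But $\psi_{0,0}=e$, so this is precisely the image under inversion of the $n=m=l=0$ case of the associativity identity for $\psi$ already established (all maps involved being equivalences), and therefore holds.

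The main obstacle is the sign verification in the first step, namely identifying $\alpha$ with the realization-interchange isomorphism for the bisimplicial bar construction and checking its compatibility with the canonical augmentations. This is the genuine crux, but it has the same flavour as the coherence calculation for $\alpha$ that the paper already records without proof, so it can be handled at the same level of detail; everything else is formal.
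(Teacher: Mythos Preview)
Your proposal is correct, and for the associativity of $\bar{\psi}$ it is essentially the paper's argument: reduce to coassociativity of $e^{-1}$, which is the $n=m=l=0$ case of the associativity of $\psi$ since $e=\psi_{0,0}$.

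For the associativity of $\psi$ itself, however, you take a genuinely different route from the paper. You argue via the bisimplicial structure: both composites arise from the bisimplicial bar construction by levelwise multiplication followed by realization, the two sides differing only in the order of realization, which is precisely what $\alpha$ mediates. This is correct and conceptually satisfying, but it leaves the sign verification as the real content, as you acknowledge. The paper instead does a direct element-wise calculation that sidesteps all of this: since the natural map $|M_{(\bullet)}|\to M$ for a constant simplicial object is just projection onto simplicial degree~$0$, the map $\psi_{n,m}$ sends $j_1[a_1|\cdots|a_p]j_2$ to $j_1j_2$ if $p=0$ and to $0$ otherwise. Hence both sides of the associativity square send $j_1[a](j_2[b]j_3)$ to $j_1j_2j_3$ when $p=q=0$ and to $0$ otherwise, and in the sole surviving case $p=q=0$ the sign in $\alpha$ is $(-1)^0=+1$. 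So the paper's argument is a two-line computation with no residual sign bookkeeping; your bisimplicial approach would generalize better (to pairings not factoring through simplicial degree~$0$), but here it is working harder than necessary.
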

\begin{proof}
To show that $\psi$ is associative we must show the following diagram commutes:
\[ \xymatrixcompile{
{J(l) \ba \big(J(m) \ba J(n)\big)} \ar[rrr]^{J(l) \ba \psi_{m,n}} \ar[d]^\cong_\alpha & & &
  {J(l) \ba J(m+n)} \ar[dd]^{\psi_{l,m+n}} \\
{\big(J(l) \ba J(m)\big) \ba J(n)} \ar[d]^{\psi_{l,m} \ba J(n)} \\
{J(l+m) \ba J(n)} \ar[rrr]^{\psi_{l+m,n}} & & & {J(l+m+n)}.
}\]
Let $[a]=[a_1|\cdots|a_p] \in A^{\otimes p}$, $[b]=[b_1|\cdots|b_p] \in A^{\otimes q}$ and let $j_1[a](j_2[b]j_3)$ be a generic element in $J(l) \ba (J(m) \ba J(n))$. A simple calculation shows that
\begin{align*}
\psi(1\ba \psi) (j_1[a](j_2[b]j_3)) &=
\begin{cases}
  j_1 j_2 j_3 & p=q=0 \\
  0 & \text{otherwise}
 \end{cases}
\\ &= \psi(\psi \ba 1) \alpha (j_1[a](j_2[b]j_3))
\end{align*}

To show that $\bar{\psi}$ is also associative, we must show that our choice of isomorphism $A \cong A \ba A$ in $\Derived(A^e)$ is coassociative. This reduces to showing that $A \ba A \xrightarrow{e} A$ is associative. Since $e=\psi_{0,0}$, it is indeed associative by the first part of the proof.
\end{proof}

\begin{lemma}
The pairing $\bar{\psi}_{0,0}$ is the standard multiplication on $HH^*(A)$.
\end{lemma}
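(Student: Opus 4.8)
The plan is to identify $\bar{\psi}_{0,0}$ with the composition (Yoneda) product on $\ext^*_{A^e}(A,A)$; since it is classical that for Hochschild cohomology the composition product coincides with the cup product, this product is exactly the standard ring structure on $HH^*(A)$. Concretely, for $f\in\ext^{-s}_{A^e}(A,A)$ and $g\in\ext^{-t}_{A^e}(A,A)$, regarded as morphisms $f\colon\Sigma^s A\to A$ and $g\colon\Sigma^t A\to A$ in $\Derived(A^e)$ (using the cofibrant replacement $\tilde A=A\ba A$ as in~\ref{sub: Pairing on D1}), I want to show
\[ \bar{\psi}_{0,0}(f\otimes g)=f\circ\Sigma^s g\colon\Sigma^{s+t}A\to A \]
in $\Derived(A^e)$, up to the sign conventions fixed in Section~\ref{sub: Signs}.

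First I would unwind the ingredients. Since $J(0)=A$ and $\psi_{0,0}=e$, the formula for $\bar{\psi}$ reads $\bar{\psi}_{0,0}(f\otimes g)=e\circ(f\ba g)\circ e^{-1}$ in $\Derived(A^e)$, where $e^{-1}\colon\Sigma^{s+t}A\xrightarrow{\simeq}\Sigma^s A\ba\Sigma^t A$. By Lemma~\ref{lem: Map from derived A-tensor to non derived}, $A\ba A$ is a model for $A\otimes_A^{\mathbf L}A$, and by definition $e\colon A\ba A\to A$ is the composite $A\ba A\xrightarrow{\sim}A\otimes_A A\xrightarrow{\cong}A$ whose second map is the multiplication; hence, in $\Derived(A^e)$, $e$ represents the multiplication $A\otimes_A^{\mathbf L}A\to A$. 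Because $A$ is the unit object for $\otimes_A^{\mathbf L}$, this multiplication is the unit isomorphism, so $e^{-1}$ represents the canonical comultiplication $A\xrightarrow{\cong}A\otimes_A^{\mathbf L}A$; and, by bilinearity and naturality of $\ba$, the map $f\ba g$ represents $f\otimes_A^{\mathbf L}g$ under $\Sigma^s A\ba\Sigma^t A\simeq\Sigma^s A\otimes_A^{\mathbf L}\Sigma^t A$.

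Next comes the composition. Writing $f\ba g=(f\ba 1_A)\circ(1_{\Sigma^s A}\ba g)$ and using naturality of the right unit map $er_{(-)}\colon(-)\ba A\to(-)$ in its first slot to slide $f$ across $e$, one gets $e\circ(f\ba 1_A)=f\circ er_{\Sigma^s A}$, hence
\[ \bar{\psi}_{0,0}(f\otimes g)=f\circ\Big(er_{\Sigma^s A}\circ(1_{\Sigma^s A}\ba g)\circ e^{-1}\Big). \]
The bracketed morphism is $\Sigma^s A\otimes_A^{\mathbf L}(-)$ applied to $g\colon\Sigma^t A\to A$, read through $\Sigma^s A\ba\Sigma^t A\simeq\Sigma^s A\otimes_A^{\mathbf L}\Sigma^t A$ and $\Sigma^s A\ba A\simeq\Sigma^s A\otimes_A^{\mathbf L}A\cong\Sigma^s A$; since $A$ is the unit, $A\otimes_A^{\mathbf L}g=g$ and this composite is $\Sigma^s g$. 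Assembling yields $\bar{\psi}_{0,0}(f\otimes g)=f\circ\Sigma^s g$, the standard product. Associativity was already recorded in Lemma~\ref{lem: Associativity of pairing psi}, and graded-commutativity of $HH^*(A)$ reconciles this with the a priori left/right asymmetry of the formula.

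I expect the main obstacle to be the sign bookkeeping rather than the structural argument, which is essentially forced. The standard isomorphisms $\Sigma(X\ba Y)\cong(\Sigma X)\ba Y\cong X\ba(\Sigma Y)$ all carry Koszul signs (Section~\ref{sub: Signs}), and one must check that the suspensions $\Sigma^s,\Sigma^t$ thread through $\bar{\psi}_{0,0}$ consistently with the choices made in the definition of $\bar{\psi}$ and with the convention in~\ref{sub: Signs} for reading a map out of a bar construction as an $\ext$-class; the interchange $f\ba g=(f\ba 1)(1\ba g)$ and the naturality of the unit maps contribute further Koszul signs to be tracked. A smaller point to make precise is that the natural map $A\ba A\to A$ genuinely represents the multiplication $A\otimes_A^{\mathbf L}A\to A$ in $\Derived(A^e)$, compatibly with the unit isomorphisms modelled by $\ba$; but this is immediate from the definition of $e$ together with Lemma~\ref{lem: Map from derived A-tensor to non derived}.
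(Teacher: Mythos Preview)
Your argument is correct: unwinding the definition and using naturality of the unit maps $er_M$ together with the fact that $A$ is the monoidal unit for $\otimes_A^{\mathbf L}$ does give $\bar\psi_{0,0}(f\otimes g)=f\circ\Sigma^s g$. The only delicate point, as you note yourself, is making sure the Koszul signs coming from the identifications $\Sigma^{s+t}(A\ba A)\cong\Sigma^s A\ba\Sigma^t A$ and from the interchange $f\ba g=(f\ba 1)(1\ba g)$ line up with the paper's conventions in~\ref{sub: Signs}; this is routine but must be done.

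The paper takes a different and somewhat slicker route: rather than computing $\bar\psi_{0,0}$ directly, it invokes the Eckmann--Hilton argument. One checks that $1_A$ is a two-sided unit for $\bar\psi_{0,0}$ and that the interchange law $\bar\psi\big((f_1\circ f_2)\otimes(g_1\circ g_2)\big)=\bar\psi(f_1\otimes g_1)\circ\bar\psi(f_2\otimes g_2)$ holds; Eckmann--Hilton then forces $\bar\psi_{0,0}$ to coincide with composition and to be graded-commutative. The advantage of the paper's approach is that it sidesteps all sign bookkeeping and yields commutativity as a byproduct rather than importing it from a classical fact about Hochschild cohomology. Your approach, by contrast, gives an explicit closed formula for the product and makes the role of the monoidal unit transparent, at the cost of tracking the suspensions carefully.
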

\begin{proof}
This is well known and so we shall only sketch the proof, which uses the Eckmann-Hilton argument. The standard multiplication on $\ext_{A^e}^*(A,A)$ is done by composition of arrows, denoted $f\circ g$. It is easy to show that the identity morphism $1_A$ is a unit also for $\bar{\psi}$. Thus, for any $f\in HH^*(A)$
\[ f \circ 1 = 1 \circ f = f = \bar{\psi}(1\otimes f) = \bar{\psi}(f\otimes 1) . \]
It is also a simple exercise to show that for any $f_1,f_2,g_1,g_2 \in HH^*(A)$
\[ \bar{\psi}((f_1\circ f_2) \otimes (g_1 \circ g_2)) = \bar{\psi}(f_1 \otimes g_1) \circ \bar{\psi}(f_2 \otimes g_2). \]
Thus, by the Eckmann-Hilton argument both multiplications are the same and are associative and graded-commutative.
\end{proof}

\subsection{The multiplication on $\mathbf{E^1}$}
The multiplication on the $E^1$ term arises from the pairing $\psi$ in a standard way, which we will now follow (compare \cite{DuggerMultiplicative1}). For convenience we shall denote the $A$-bimodule $\Sigma^{-n}A^n$ by $\A^n$.

\begin{lemma}
There is a short exact sequence of $A$-bimodules
\[ 0 \to \binom{J(n) \ba J(m+1) +}{ J(n+1) \ba J(m)} \to J(n) \ba J(m) \to \A^n \ba \A^m \to 0\]
where $J(n) \ba J(m+1)+J(n+1) \ba J(m)$ is the sum as subcomplexes of $J(n) \ba J(m)$.
\end{lemma}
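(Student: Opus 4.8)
The plan is to deduce the sequence from the short exact sequences of $A$-bimodules $0\to J(n+1)\xrightarrow{\iota}J(n)\xrightarrow{\theta}\A^n\to 0$ and $0\to J(m+1)\to J(m)\to\A^m\to 0$, together with the fact that $\ba$ preserves short exact sequences in either variable. Write $R=J(n)\ba J(m)$ and set $P=J(n+1)\ba J(m)$, $Q=J(n)\ba J(m+1)$. Applying $\ba$ to the first sequence in the left variable, and to the second sequence in the right variable, exhibits $P$ and $Q$ as sub-bimodules of $R$ via bimodule maps; hence $P+Q$ is a sub-bimodule of $R$ (being the image of a bimodule map out of $P\oplus Q$), so the statement makes sense, and moreover $R/P\cong\A^n\ba J(m)$ canonically.

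The crux is to identify the image of $Q$ under the surjection $R\twoheadrightarrow R/P\cong\A^n\ba J(m)$. By functoriality of $\ba$ in the left variable applied to $\theta\colon J(n)\to\A^n$, the composite $J(n)\ba J(m+1)\hookrightarrow R\twoheadrightarrow\A^n\ba J(m)$ factors as the surjection $J(n)\ba J(m+1)\twoheadrightarrow\A^n\ba J(m+1)$ followed by the inclusion $\A^n\ba J(m+1)\hookrightarrow\A^n\ba J(m)$ coming from the second sequence. Thus the image of $Q$ in $R/P$ is exactly the sub-bimodule $\A^n\ba J(m+1)$, and the isomorphism theorems in the abelian category of $A$-bimodules give
\[ R/(P+Q)\;\cong\;(R/P)/(\A^n\ba J(m+1))\;\cong\;(\A^n\ba J(m))/(\A^n\ba J(m+1))\;\cong\;\A^n\ba\A^m, \]
where the last isomorphism again comes from applying $\ba$ to $0\to J(m+1)\to J(m)\to\A^m\to 0$ in the right variable. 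Tracing through the identifications shows that the resulting surjection $R\to\A^n\ba\A^m$ is $\theta\ba\theta$ and that its kernel is $P+Q$, which is the claimed short exact sequence.

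As an alternative, one can argue one simplicial degree at a time: because $-\otimes_k-$ is exact in each variable, the quotient of $B_{(p)}(J(n),A,J(m))=J(n)\otimes A^{\otimes p}\otimes J(m)$ by $B_{(p)}(J(n+1),A,J(m))+B_{(p)}(J(n),A,J(m+1))$ is naturally $\A^n\otimes A^{\otimes p}\otimes\A^m=B_{(p)}(\A^n,A,\A^m)$. These identifications are compatible with the face maps and internal differentials since $A$ is coconnective (so $J(n)A\subseteq J(n)$, $AJ(m)\subseteq J(m)$, and similarly with $n+1$ and $m+1$), hence they assemble into an isomorphism of simplicial $A$-bimodules, and applying realization -- which is exact, being built out of $\bigoplus$ and $\Sigma$ -- produces the sequence. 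I do not expect a genuine obstacle here: both routes reduce to exactness of $-\otimes_k-$ (equivalently of $\ba$ and of realization) together with routine isomorphism-theorem bookkeeping. The one point that deserves an explicit sentence is the identification of the image of $Q$ inside $R/P$ in the first route, or, in the second route, the verification that the degreewise isomorphisms are compatible with the simplicial and internal differentials.
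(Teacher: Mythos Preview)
Your proposal is correct. The alternative route you sketch at the end --- establishing the short exact sequence
\[0 \to \binom{B_{(t)}(J(n),A,J(m+1))+}{B_{(t)}(J(n+1),A,J(m))} \to B_{(t)}(J(n),A,J(m)) \to B_{(t)}(\A^n,A,\A^m) \to 0\]
in each simplicial degree from exactness of $-\otimes_k-$, then applying realization --- is exactly the paper's proof, stated in a single sentence there.

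Your primary route is a mild repackaging of the same idea: rather than working degreewise you invoke the paper's lemma that $\ba$ preserves short exact sequences in either variable (itself proved degreewise) and then run a standard two-step quotient argument via the isomorphism theorems. This is perfectly valid and arguably cleaner, since it isolates the one nontrivial point --- that the image of $Q=J(n)\ba J(m+1)$ in $R/P\cong\A^n\ba J(m)$ is precisely $\A^n\ba J(m+1)$ --- which the paper's terse proof leaves implicit. The two approaches are equivalent in content; yours just trades the explicit simplicial bookkeeping for an appeal to the already-recorded exactness property of $\ba$.
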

\begin{proof}
There are short exact sequences:
\[0 \to \binom{B_{(t)}(J(n),A,J(m+1))+}{B_{(t)}(J(n+1),A,J(m))} \to B_{(t)}(J(n),A,J(m)) \to B_{(t)}(\A^n,A,\A^m) \to 0\]
which yield the desired short exact sequence after taking realization.
\end{proof}

It is a simple observation that $J(n) \ba J(m+1)+J(n+1) \ba J(m)$ is in fact the pushout
\[J(n+1)\ba J(m) \!\!\!\!\!\!\!\!\!\! \coprod_{J(n+1)\ba J(m+1)} \!\!\!\!\!\!\!\!\!\! J(n)\ba J(m+1) .\]
The following lemma is an immediate consequence of this observation and the fact that $\psi$ commutes with $\iota$.

\begin{lemma}
The following diagram of $A$-bimodules commutes
\[ \xymatrixcompile{
{\displaystyle \binom{J(n) \ba J(m+1)+}{J(n+1) \ba J(m)}} \ar[d]^{\psi'_{n,m}} \ar[r] & {J(n) \ba J(m)} \ar[d]^{\psi_{n,m}} \\
{J(n+m+1)} \ar[r]^\iota& {J(n+m)} } \]
where $\psi'$ is the obvious map of subcomplexes.
\end{lemma}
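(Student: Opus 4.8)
The plan is to reduce everything to the pushout description recorded just above the statement: the subcomplex
\[ S \;:=\; J(n) \ba J(m+1) + J(n+1) \ba J(m) \]
is the pushout of $J(n+1)\ba J(m) \leftarrow J(n+1)\ba J(m+1) \rightarrow J(n)\ba J(m+1)$, so a map out of $S$ is exactly a pair of maps out of $J(n+1)\ba J(m)$ and $J(n)\ba J(m+1)$ that agree on $J(n+1)\ba J(m+1)$. Under this identification $\psi'_{n,m}\colon S \to J(n+m+1)$ is the pair $\bigl(\psi_{n+1,m},\,\psi_{n,m+1}\bigr)$, and the inclusion $S \hookrightarrow J(n)\ba J(m)$ is the pair $(\iota\ba 1,\,1\ba\iota)$.

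First I would check that $\psi'_{n,m}$ is actually well defined, i.e.\ that $\psi_{n+1,m}$ and $\psi_{n,m+1}$ restrict to the same map on $J(n+1)\ba J(m+1)$. Restricting $\psi_{n+1,m}$ along $1\ba\iota$ gives $\psi_{n+1,m}(1\ba\iota)=\iota\,\psi_{n+1,m+1}$ by Lemma~\ref{lem: Pairing psi commutes with iota}, and restricting $\psi_{n,m+1}$ along $\iota\ba 1$ gives $\psi_{n,m+1}(\iota\ba 1)=\iota\,\psi_{n+1,m+1}$ by the same lemma; these coincide. Then, to see the square commutes, by the universal property of the pushout it is enough to precompose both legs with the two structure maps of $S$. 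On the $J(n+1)\ba J(m)$ factor the clockwise leg is $\psi_{n,m}(\iota\ba 1)$, which equals $\iota\,\psi_{n+1,m}$ by Lemma~\ref{lem: Pairing psi commutes with iota}, and this is precisely the counter-clockwise leg $\iota\,\psi'_{n,m}$ on that factor. The argument on the $J(n)\ba J(m+1)$ factor is identical, using $\psi(1\ba\iota)=\iota\psi$ instead. Since the two factors cover $S$, the square commutes.

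Honestly there is no real obstacle here: once one has the pushout description and the two identities $\psi(\iota\ba 1)=\iota\psi$ and $\psi(1\ba\iota)=\iota\psi$ from Lemma~\ref{lem: Pairing psi commutes with iota}, the verification is pure diagram-chasing with honest maps of $A$-bimodules, and no sign computation is involved (the relevant signs were already built into $\psi$ and $\iota$). The only point requiring a little care is matching up which of the two identities is used on which summand of the pushout. This is exactly why the statement can be asserted as ``an immediate consequence'' of those two inputs.
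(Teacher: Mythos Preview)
Your proof is correct and follows exactly the approach the paper indicates: the paper states (without further argument) that the lemma is ``an immediate consequence'' of the pushout description of $J(n)\ba J(m+1)+J(n+1)\ba J(m)$ together with Lemma~\ref{lem: Pairing psi commutes with iota}, and your argument is precisely the unpacking of that assertion.
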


\begin{definition}
\label{def: The pairing mu}
Define a pairing $\mu_{n,m}:\A^n \ba \A^m \to \A^{n+m}$ of $A$-bimodules to be the pairing induced from the following diagram where both rows are short exact sequences of complexes:
\[ \xymatrixcompile{
{\displaystyle \binom{J(n) \ba J(m+1)+}{J(n+1) \ba J(m)}} \ar[d]^{\psi'_{n,m}} \ar[r] & {J(n) \ba J(m)} \ar[r] \ar[d]^{\psi_{n,m}} & {\A^n \ba \A^m} \ar[d]^{\mu_{n,m}} \\
{J(n+m+1)} \ar[r]& {J(n+m)} \ar[r] & {\A^{n+m}} . } \]
The multiplication on the $E^1$ term follows easily from the pairing above. Define
\[\bar{\mu}_{n,m}:\ext^*_{A^e}(A,\A^n)\otimes_k \ext^*_{A^e}(A, \A^m) \to \ext^*_{A^e}(A, \A^{n+m})\]
as the composition $\mu_{n,m}(-\ba-)e^{-1}$.
\end{definition}

\begin{lemma}
\label{lem: Associativity of pairing mu}
The pairing $\mu$ is associative and so is the multiplication $\bar{\mu}$.
\end{lemma}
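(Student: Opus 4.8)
The plan is to deduce the associativity of $\mu$ from that of $\psi$ (Lemma~\ref{lem: Associativity of pairing psi}) by identifying a common quotient. Precisely, I want to show that the diagram of Lemma~\ref{lem: Associativity of pairing psi}, with each $J(i)$ replaced by $\A^i$ and each $\psi$ by the corresponding $\mu$, commutes; the associativity of $\bar{\mu}$ will then follow from that of $\mu$ together with the coassociativity of $e^{-1}\colon A\to\tilde{A}$, exactly as in the last paragraph of the proof of Lemma~\ref{lem: Associativity of pairing psi}.

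First I would record a threefold version of the subcomplex short exact sequence preceding Definition~\ref{def: The pairing mu}. Since the bar construction preserves short exact sequences in each variable, iterating that sequence twice shows that $\A^l \ba \A^m \ba \A^n$, in either parenthesization, is a cokernel of an inclusion into the corresponding threefold bar construction. Concretely, writing $K_{l,m}\subseteq J(l)\ba J(m)$ for the subcomplex $J(l)\ba J(m+1)+J(l+1)\ba J(m)$, the object $(\A^l\ba\A^m)\ba\A^n$ is the quotient of $(J(l)\ba J(m))\ba J(n)$ by
\[ T'\;=\;K_{l,m}\ba J(n)\;+\;\big(J(l)\ba J(m)\big)\ba J(n+1),\]
with the quotient map being the bar construction of the quotient maps $J(i)\twoheadrightarrow\A^i=J(i)/J(i+1)$ in each slot; symmetrically $\A^l\ba(\A^m\ba\A^n)$ is the quotient of $J(l)\ba(J(m)\ba J(n))$ by the analogous subcomplex $T''$. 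By naturality of $\alpha$ in its three arguments, $\alpha$ carries $T'$ onto $T''$, and by naturality of $\alpha$ with respect to the surjections $J(i)\twoheadrightarrow\A^i$ the induced isomorphism $(\A^l\ba\A^m)\ba\A^n\cong\A^l\ba(\A^m\ba\A^n)$ is exactly the associativity isomorphism of $\ba$ applied to $\A^l,\A^m,\A^n$. Since passing to these quotients changes no degrees, the sign in $\alpha$ is unaffected.

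Next I would check that the threefold pairing $\psi^{(3)}:=\psi_{l+m,n}(\psi_{l,m}\ba 1)=\psi_{l,m+n}(1\ba\psi_{m,n})\,\alpha$ (the equality being Lemma~\ref{lem: Associativity of pairing psi}) carries $T'$ into $J(l+m+n+1)$. By the commuting square preceding Definition~\ref{def: The pairing mu}, $\psi_{l,m}$ sends $K_{l,m}$ into $J(l+m+1)$, hence $\psi_{l,m}\ba 1$ sends $K_{l,m}\ba J(n)$ into $J(l+m+1)\ba J(n)$ and sends $\big(J(l)\ba J(m)\big)\ba J(n+1)$ into $J(l+m)\ba J(n+1)$; applying Lemma~\ref{lem: Pairing psi commutes with iota} once more, $\psi_{l+m,n}$ sends both $J(l+m+1)\ba J(n)$ and $J(l+m)\ba J(n+1)$ into $J(l+m+n+1)$. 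Therefore $\psi^{(3)}$ descends to a map $\bar{\mu}^{(3)}\colon\A^l\ba\A^m\ba\A^n\to\A^{l+m+n}$ on the quotient by $T'$. Unwinding Definition~\ref{def: The pairing mu} — each $\mu_{a,b}$ is the map induced on cokernels by $\psi_{a,b}$ and the quotient maps of the two rows, and these quotient maps are compatible with $\ba$ slotwise — shows that both $\mu_{l+m,n}(\mu_{l,m}\ba 1)$ and $\mu_{l,m+n}(1\ba\mu_{m,n})$ composed with the associativity isomorphism of $\ba$ on $\A^l,\A^m,\A^n$ are induced on the quotient by $T'$ from $\psi^{(3)}$, and hence both equal $\bar{\mu}^{(3)}$. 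This is the desired commuting diagram, proving associativity of $\mu$, and the statement for $\bar{\mu}$ follows as indicated above.

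I expect the main obstacle to be purely organizational: making the iterated short-exact-sequence picture precise enough that the two objects $(\A^l\ba\A^m)\ba\A^n$ and $\A^l\ba(\A^m\ba\A^n)$ and the quotients of the two threefold bar constructions are literally the same cokernels, and verifying that $\alpha$ and all the quotient maps assemble into one commuting diagram. Since no new signs intervene, beyond this bookkeeping the argument is a routine diagram chase.
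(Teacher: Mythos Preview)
Your argument is correct: you deduce the associativity of $\mu$ from that of $\psi$ by passing to quotients, checking that the threefold $\psi$ kills the appropriate subcomplex $T'$ and hence induces a single map on $\A^l\ba\A^m\ba\A^n$ through which both iterated $\mu$'s factor. The bookkeeping you flag (that $\alpha$ respects the quotient descriptions and induces the standard associativity isomorphism on the $\A$'s) is routine, and your treatment of $\bar{\mu}$ matches the paper's.

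The paper, however, takes a different and shorter route. It explicitly notes that associativity of $\mu$ \emph{can} be deduced from Lemma~\ref{lem: Associativity of pairing psi} --- exactly your approach --- but then opts instead for a direct calculation, remarking that this is ``much easier and makes the associativity obvious.'' The point is that $\mu_{n,m}(x[a]y)$ vanishes unless the bar-word $[a]$ is empty, in which case it returns $xy\in A^{n+m}$; so the computation verifying $\mu(1\ba\mu)=\mu(\mu\ba 1)\alpha$ is literally the same one carried out for $\psi$ in the proof of Lemma~\ref{lem: Associativity of pairing psi}. Your approach is more structural and explains \emph{why} the two associativities are linked, at the cost of the quotient bookkeeping; the paper's approach avoids that bookkeeping entirely by repeating a two-line computation.
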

\begin{proof}
That $\mu$ is associative can be deduced from Lemma~\ref{lem: Associativity of pairing psi}, but a direct calculation is much easier and makes the associativity obvious. As is the proof of Lemma~\ref{lem: Associativity of pairing psi}, associativity of $\bar{\mu}$ is a consequence of the associativity of $\mu$ and of $e$.
\end{proof}

\subsection{Representing multiplication on $\mathbf{E^1}$}
We first show how to represent certain elements in $E^1$. The following lemma is well known in many settings, see for example \cite[Lemma 3.3]{DuggerMultiplicative1}.

\begin{lemma}
\label{lem: Representing elements in E1}
Let $x\in \ext_{A^e}^s(A,\A^p)$ and $a \in \ext_{A^e}^{s+1}(A,J(p+n+1))$ such that $\kappa x = \iota^n a$. Then there exists a commutative diagram of short exact sequences of $A^e$-modules
\[ \xymatrixcompile{
{\Sigma^{-s-1}\tilde{A}} \ar[r] \ar[d]^w & {\cone \Sigma^{-s-1} \tilde{A}} \ar[d]^f \ar[r] & {\Sigma^{-s}\tilde{A}} \ar[d]^{\tilde{x}}\\
{J(p+1)} \ar[r]^\iota & {J(p)} \ar[r]^\theta & {\A^p} } \]
and a map of $A^e$-modules $\tilde{a}: \Sigma^{-s-1}\tilde{A} \to J(p+n+1)$ such that
\begin{enumerate}
\item $\tilde{a}$ represents $a$,
\item $w=\iota^n \tilde{a}$ and so $w$ represents $\kappa x$,
\item $\tilde{x}$ represents $x$.
\end{enumerate}
\end{lemma}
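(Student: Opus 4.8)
The plan is to unwind the definitions of the connecting maps $\theta$ and $\kappa$ from Section~\ref{sub: Filtering the dga} and produce the required diagram by a standard manipulation of cones. First I would recall that $x \in \ext^s_{A^e}(A,\A^p)$ is represented, in the sense fixed at the end of Section~\ref{sub: Pairing on D1}, by a map of $A^e$-modules $\tilde{x}:\Sigma^{-s}\tilde{A} \to \A^p$ (via the diagram $A \xleftarrow{e} \tilde{A} \xleftarrow{e\ba e} \tilde{A}\ba\tilde{A}$, which I keep implicit). Since $\tilde{A} = A\ba A$ is cofibrant, $\Sigma^{-s}\tilde{A}$ is cofibrant, so the map $\Sigma^{-s}\tilde{A}\to \cone\Sigma^{-s}\tilde{A}$ is a cofibration and every morphism in $\Derived(A^e)$ out of $\Sigma^{-s}\tilde A$ is represented by an honest map. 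The short exact sequence $0\to J(p+1)\xrightarrow{\iota} J(p)\xrightarrow{\theta}\A^p\to 0$ gives an exact triangle in $\Derived(A^e)$, and by the definition of $\kappa$ in Section~\ref{sub: Filtering the dga} as the map $\A^p\to\Sigma J(p+1)$ represented by $\A^p\to\cone\theta\xleftarrow{\sim}\Sigma J(p+1)$, lifting $\tilde x$ along $\theta$ is exactly the standard problem whose obstruction is $\kappa x$.

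The core step is therefore: construct a map $f:\cone\Sigma^{-s-1}\tilde{A}\to J(p)$ (equivalently, a nullhomotopy of $\iota\circ(\text{something})$) together with a map $w:\Sigma^{-s-1}\tilde{A}\to J(p+1)$ making the left square commute and such that $\theta f$ restricted to the quotient $\Sigma^{-s}\tilde A$ is $\tilde x$. Concretely, $\cone\Sigma^{-s-1}\tilde A$ has underlying graded module $(\Sigma^{-s}\tilde A)^\natural \oplus (\cone\Sigma^{-s-1}\tilde A \text{ stuff})$; a map out of it is the same data as a map $\tilde x':\Sigma^{-s}\tilde A\to J(p)$ together with a chain homotopy witnessing $\theta\tilde x' \simeq \tilde x$, i.e. a choice of lift of $\tilde x$ through $\theta$ up to homotopy. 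Such a lift exists precisely because we have assumed $\kappa x = \iota^n a$ is in the image of $\iota^n$; in particular $\kappa x$ dies in $\ext^{s+1}_{A^e}(A,J(p+1))$ after composing with $\iota^{\infty}$ only in the limit, but here we only need that $\kappa x$ is represented by a map $w$ that factors as $\iota^n\tilde a$, which I take as the definition of $\tilde a$ and the content of condition (2). So I would set $\tilde a:\Sigma^{-s-1}\tilde A\to J(p+n+1)$ to be any map representing $a$ (possible since the source is cofibrant), put $w = \iota^n\tilde a$, and observe $w$ represents $\iota^n a = \kappa x$; then the defining property of $\kappa$ yields the map $f$ completing the morphism of short exact sequences, with $\tilde x$ on the right being (homotopic to, hence representing, after adjusting) the original $\tilde x$. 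Conditions (1) and (3) are then immediate from the construction.

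The one genuinely delicate point — and the step I expect to be the main obstacle — is matching signs and making sure the map $\tilde x$ appearing as the right-hand vertical arrow literally represents $x$ rather than $\pm x$ or something homotopic that one must further correct. This is exactly the kind of bookkeeping that Section~\ref{sub: Signs} and Lemma~\ref{lem: Signs} are set up to handle: the identification of maps out of a cone with maps-plus-homotopies carries signs depending on the degree $s$, and the map $\kappa$ was pinned down via $\cone\theta$ precisely to remove the indeterminacy of the connecting homomorphism. So in writing this out I would (a) fix once and for all the model $\cone\Sigma^{-s-1}\tilde A$ and the maps $\Sigma^{-s-1}\tilde A\to\cone\Sigma^{-s-1}\tilde A\to\Sigma^{-s}\tilde A$, (b) translate the hypothesis $\kappa x = \iota^n a$ into the existence of the homotopy data defining $f$, using the $\cone\theta$-description of $\kappa$, and (c) check the square $\iota w = f\circ(\Sigma^{-s-1}\tilde A\to\cone\Sigma^{-s-1}\tilde A)$ commutes on the nose, invoking Lemma~\ref{lem: Signs} where a sign threatens to appear. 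The rest is the standard "fill in a morphism of short exact sequences from a partial lift" argument, and since the paper explicitly flags this as "well known in many settings, see \cite[Lemma 3.3]{DuggerMultiplicative1}", I would keep the exposition brief and lean on that reference for the routine verifications.
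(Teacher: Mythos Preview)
Your plan has a genuine gap, and the emphasis on signs is misplaced.

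The confused step is in your second paragraph. You write that a map out of $\cone\Sigma^{-s-1}\tilde A$ amounts to ``a choice of lift of $\tilde x$ through $\theta$ up to homotopy'' and that ``such a lift exists precisely because we have assumed $\kappa x=\iota^n a$''. That is not correct: a lift of $\tilde x$ through $\theta$ exists (up to homotopy) if and only if $\kappa x=0$, which is \emph{not} what is assumed when $n>0$. What actually happens is the other way round: a map $f:\cone\Sigma^{-s-1}\tilde A\to J(p)$ is the data of a map $\Sigma^{-s-1}\tilde A\to J(p)$ together with a nullhomotopy of it; the restriction is $\iota w$ for some $w$, and the induced map on the quotient is some $\tilde x'$. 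Starting from an \emph{arbitrary} $w$ representing $\kappa x$ and choosing any nullhomotopy of $\iota w$ (which exists since $\iota\kappa=0$), the resulting $\tilde x'$ only satisfies $\kappa[\tilde x']=\kappa x$, so $[\tilde x']$ may differ from $x$ by something in the image of $\theta$. Your parenthetical ``(homotopic to, hence representing, after adjusting)'' is exactly the step that needs an argument and is not supplied.

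The paper's proof addresses precisely this point, and its mechanism is the homotopy extension property, not sign bookkeeping. It first does the $n=0$ case (standard: starting from $\tilde x$ one produces $f_0$ and $w_0$ with $w_0$ representing $\kappa x$). For $n>0$ it then picks any $\tilde a$ representing $a$; the maps $w_0$ and $\iota^n\tilde a$ both represent $\kappa x$ and hence are homotopic via some $h:\Sigma^{-s-1}\tilde A\wedge I\to J(p+1)$. Because $\Sigma^{-s-1}\tilde A\hookrightarrow\cone\Sigma^{-s-1}\tilde A$ is a cofibration, one extends $\iota h$ to a homotopy of $f_0$, obtaining $f'$ whose restriction is $\iota(\iota^n\tilde a)$ on the nose; the induced map on the quotient is homotopic to the original $\tilde x$ and so still represents $x$. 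That HEP step is the whole content of the lemma for $n>0$, and it is absent from your sketch. Lemma~\ref{lem: Signs} plays no role here; signs only enter later, in Lemma~\ref{lem: Representing multiplication on E1} and Proposition~\ref{pro: Multiplicativity of the HH spectral sequence}.
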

\begin{proof}

The case for $n=0$ is well known and therefore omitted. As in the proof of \cite[Lemma 3.3]{DuggerMultiplicative1}, the case for $n>0$ is done using the homotopy extension property and we shall only sketch the argument. Recall that for an $A^e$-module $M$ we denote by $M\wedge I$ the mapping cone of $M \xrightarrow{(1,-1)} M \oplus M$. Let the two obvious maps $M \to M \wedge I$ be denoted by $i_o$ and $i_1$. Since $\tilde{A}$ and $\cone \tilde{A}$ are cofibrant, $\tilde{A}\wedge I$ and $\cone \tilde{A} \wedge I$ are very good cylinder objects for $\tilde{A}$ and $\cone \tilde{A}$ respectively (see~\ref{sub: Cones and cylinders}).

So suppose we have found $w:\Sigma^{-s-1}\tilde{A} \to J(p+1)$ and $f: \cone \Sigma^{-s-1} \tilde{A} \to J(p)$ as for the case $n=0$. Choose a map $\tilde{a}: \Sigma^{-s-1}\tilde{A} \to J(p+n+1)$ which represents $a$. Since $\iota \tilde{a}$ and $w$ need not be equal, we need to replace $f$ by an equivalent map $f'$ which will make the following diagram commute
\[ \xymatrixcompile{
{\Sigma^{-s-1}\tilde{A}} \ar[r] \ar[d]^{\iota \tilde{a}} & {\cone \Sigma^{-s-1} \tilde{A}} \ar[d]^{f'} \\
{J(p+1)} \ar[r]^\iota & {J(p)} .  } \]

Equivalence of $\iota \tilde{a}$ and $w$ implies there is a map $h:\Sigma^{-s-1}\tilde{A} \wedge I\to J(p+1)$ which is a homotopy between these two maps, thus $hi_1=\iota a$ and $hi_0=w$. Let $Y$ be the pushout of the diagram
\[\Sigma^{-s-1}\tilde{A} \wedge I \xleftarrow{i_0} \Sigma^{-s-1}\tilde{A} \xrightarrow{ } \cone \Sigma^{-s-1} \tilde{A}\]
(one can liken $Y$ to $\tilde{A}\times [0,1] \cup_{\tilde{A}\times\{0\}} \cone \tilde{A} \times \{0\}$). Clearly there is a natural map $h':Y \to J(p)$. Since $Y \to \cone \Sigma^{-s-1} \tilde{A} \wedge I$ is a cofibration, we can extend $h'$ to a homotopy $h'':\cone \Sigma^{-s-1} \tilde{A} \wedge I \to J(p)$. Now $h''i_1:\cone \Sigma^{-s-1} \tilde{A} \to J(p)$ is the map $f'$ we need.
\end{proof}

\begin{lemma}
\label{lem: Representing multiplication on E1}
Suppose $x\in \ext_{A^e}^{-s-1}(A,\A^p)$ and $y\in \ext_{A^e}^{-t-1}(A,\A^u)$ are represented by diagrams
\[ \xymatrixcompile{
{\Sigma^{s}\tilde{A}} \ar[r] \ar[d]^w & {\cone \Sigma^{s} \tilde{A}} \ar[d]^f \ar[r] & {\Sigma^{s+1}\tilde{A}} \ar[d]^{\tilde{x}} &  &
{\Sigma^{t}\tilde{A}} \ar[r] \ar[d]^z & {\cone \Sigma^{t} \tilde{A}} \ar[d]^g \ar[r] & {\Sigma^{t+1}\tilde{A}} \ar[d]^{\tilde{y}} \\
{J(p+1)} \ar[r]^\iota & {J(p)} \ar[r]^\theta & {\A^p} & &
{J(u+1)} \ar[r]^\iota & {J(u)} \ar[r]^\theta & {\A^u} .} \]
Then $\kappa(xy)$ is represented by
\[  \psi'_{p,u}(f \ba z \coprod_{w \ba z} w \ba g) . \]
\end{lemma}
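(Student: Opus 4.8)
The plan is to unwind the definitions of the product $xy$ and of the connecting morphism $\kappa$, and then to apply naturality of $\kappa$ to a map of short exact sequences obtained by forming the $\ba$-product of the two given representing diagrams. First I would recall from Definition~\ref{def: The pairing mu} that $xy = \bar{\mu}_{p,u}(x\otimes y)$ is represented by the composite $\mu_{p,u}\circ(\tilde{x}\ba\tilde{y})$ out of $\Sigma^{s+1}\tilde{A}\ba\Sigma^{t+1}\tilde{A}$, where, following the conventions of Section~\ref{sub: Pairing on D1}, we identify $\Sigma^{s+1}\tilde{A}\ba\Sigma^{t+1}\tilde{A}$ with a shift of $\tilde{A}\ba\tilde{A}$ by means of the standard sign isomorphisms of Section~\ref{sub: Signs}. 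So it suffices to identify $\kappa\circ\mu_{p,u}\circ(\tilde{x}\ba\tilde{y})$.

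Next I would take the $\ba$-product of the two given maps of short exact sequences. Let $P$ denote the pushout $\cone\Sigma^{s}\tilde{A}\ba\Sigma^{t}\tilde{A}\coprod_{\Sigma^{s}\tilde{A}\ba\Sigma^{t}\tilde{A}}\Sigma^{s}\tilde{A}\ba\cone\Sigma^{t}\tilde{A}$, which we regard as the subcomplex $\cone\Sigma^{s}\tilde{A}\ba\Sigma^{t}\tilde{A} + \Sigma^{s}\tilde{A}\ba\cone\Sigma^{t}\tilde{A}$ of $\cone\Sigma^{s}\tilde{A}\ba\cone\Sigma^{t}\tilde{A}$. The key points are: (i) since the given diagrams force $f$ to restrict to $\iota w$ on $\Sigma^{s}\tilde{A}$ and $g$ to restrict to $\iota z$ on $\Sigma^{t}\tilde{A}$, the map $f\ba g$ carries $P$ into $J(p)\ba J(u+1) + J(p+1)\ba J(u)$, and viewed as a map into that sum it is precisely the pushout map $f\ba z\coprod_{w\ba z}w\ba g$ (the common restriction to $\Sigma^{s}\tilde{A}\ba\Sigma^{t}\tilde{A}$ being the image of $w\ba z$, which is what the notation abbreviates); and (ii) on the quotient complexes $(\cone\Sigma^{s}\tilde{A}\ba\cone\Sigma^{t}\tilde{A})/P \cong \Sigma^{s+1}\tilde{A}\ba\Sigma^{t+1}\tilde{A}$ of Lemma~\ref{lem: Signs} and $(J(p)\ba J(u))/(J(p)\ba J(u+1) + J(p+1)\ba J(u)) \cong \A^p\ba\A^u$, the map $f\ba g$ descends to $\tilde{x}\ba\tilde{y}$. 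This exhibits a map from the short exact sequence of Lemma~\ref{lem: Signs} (taken for $X = \Sigma^{s}\tilde{A}$ and $Y = \Sigma^{t}\tilde{A}$) to the top row of the defining diagram for $\mu_{p,u}$ in Definition~\ref{def: The pairing mu}; composing with that diagram I obtain a map of short exact sequences whose right-hand vertical is $\mu_{p,u}(\tilde{x}\ba\tilde{y})$ and whose left-hand vertical is $\psi'_{p,u}\circ(f\ba z\coprod_{w\ba z}w\ba g)$.

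Finally I would invoke naturality of the connecting morphism, in the normalized form of Section~\ref{sub: Filtering the dga} (where $\kappa$ was defined through a cone precisely to remove the indeterminacy). The connecting morphism of the bottom row of the composed diagram is $\kappa$, and it sends $\mu_{p,u}(\tilde{x}\ba\tilde{y})$, that is $xy$, to $\kappa(xy)$; by naturality this equals $\psi'_{p,u}\circ(f\ba z\coprod_{w\ba z}w\ba g)$ composed with the connecting morphism of the top row. But by Lemma~\ref{lem: Signs} that connecting morphism is $\zeta$, up to a shift and the standard sign isomorphisms, and $\zeta$ together with those isomorphisms is precisely the identification $\Sigma^{s+t+1}\tilde{A}\ba\tilde{A}\simeq P$ implicit in the ``represents'' convention. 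Hence $\psi'_{p,u}(f\ba z\coprod_{w\ba z}w\ba g)$ represents $\kappa(xy)$, which is the assertion.

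The main obstacle is the sign bookkeeping rather than the underlying diagram chase. Each comparison used above --- the identification $\Sigma X\ba\Sigma Y\cong\Sigma^{2}(X\ba Y)$, the map $\zeta$ with its asymmetric behaviour under the two projections $l$ and $r$ recorded in Lemma~\ref{lem: Signs}, the cone-normalization of $\kappa$, the associativity sign $\alpha$ hidden inside $\tilde{A}\ba\tilde{A}$, and the sign conventions built into the definitions of $\psi$ and $\mu$ --- contributes a sign, and the content of the lemma is exactly that they all cancel. As in the proof of Lemma~\ref{lem: Representing elements in E1}, I would pin these signs down by direct inspection of the explicit bar-construction formulas in the elementary cases and then propagate them through the naturality statements; once the signs are settled the remaining verification is routine.
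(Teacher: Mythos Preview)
Your proposal is correct and follows the same approach as the paper: the paper's proof consists of exactly the three-row commutative diagram you assemble (top row the pushout-of-cones sequence from Lemma~\ref{lem: Signs}, middle row the $J$-pushout sequence, bottom row the defining sequence for $\mu$), together with the remark that the natural isomorphisms of Lemma~\ref{lem: Signs} are being used implicitly. Your version simply narrates this diagram as a naturality-of-$\kappa$ argument and is somewhat more explicit about where the signs enter, but the content is identical.
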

\begin{proof}
The proof is simply given by the following commutative diagram
\[\xymatrixcompile{ {\displaystyle \cone\Sigma^{s}\tilde{A} \ba \Sigma^{t}\tilde{A}
\!\!\!\!\! \coprod_{\Sigma^{s}\tilde{A} \ba \Sigma^{t}\tilde{A}} \!\!\!\!\!
\Sigma^{s}\tilde{A} \ba \cone \Sigma^{t}\tilde{A} } \ar[r] \ar[d]^-{f \ba z \coprod_{w \ba z} w \ba g}  &
{\cone \Sigma^{s}\tilde{A} \ba \cone \Sigma^{t}\tilde{A} } \ar[r] \ar[d]^{f \ba g} &
{\Sigma^{s+1}\tilde{A} \ba \Sigma^{t+1}\tilde{A}} \ar[d]^{\tilde{x} \ba \tilde{y}}
\\ {\displaystyle J(p) \ba J(u+1) \!\!\!\!\!  \coprod_{J(p+1)\ba J(u+1)} \!\!\!\!\!  J(p+1) \ba J(u)}  \ar[r] \ar[d]^-{\psi'} & {J(p) \ba J(u)} \ar[d]^\psi \ar[r] & {\A^p \ba \A^u} \ar[d]^\mu\\
{J(p+u+1)} \ar[r] & {J(p+u)} \ar[r] & {\A^{p+u}}  . }\]
Note that we are implicitly using here the natural isomorphisms described in Lemma~\ref{lem: Signs}.
\end{proof}

\subsection{The multiplicative property of the spectral sequence}
We shall employ a criterion of Massey from~\cite{Massey} to show that the spectral sequence is multiplicative. Note that translating the proof of~\cite[Proposition 5.1]{DuggerMultiplicative1} to our setting would work equally well. Translated to our setting, Massey's criterion is:

\begin{theorem}[Massey~\cite{Massey}]
Suppose the following conditions hold for the spectral sequence in Section~\ref{sec: Construction of the spectral sequence}.
\begin{enumerate}
\item $E^1$ is a graded algebra.
\item For every $x,y \in E^1$ and $a,b\in D^1$ such that $\kappa(x)=\iota^n(a)$ and $\kappa(y)=\iota^n(b)$ there exists $c\in D^1$ such that $\kappa(xy)=\iota^n(c)$ and $\theta(c)=\theta(a)y+(-1)^{|x|}x\theta(b)$.
\end{enumerate}
Then the spectral sequence is multiplicative.
\end{theorem}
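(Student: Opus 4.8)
The statement to be proved is Massey's criterion, so the plan is to follow Massey's original bookkeeping with the derived couples of the exact couple $(D^1,E^1,\iota,\theta,\kappa)$. First I would record the standard presentation of the pages of the spectral sequence purely in terms of the couple. Setting $Z_r=\kappa^{-1}(\mathrm{im}\,\iota^{r-1})$ and $B_r=\theta(\ker\iota^{r-1})$ inside $E^1$, exactness of the couple gives inclusions $B_1\subseteq B_2\subseteq\cdots$ and $\cdots\subseteq Z_2\subseteq Z_1=E^1$ with $B_r\subseteq Z_r$, together with a natural isomorphism $E^r\cong Z_r/B_r$ under which $d^r$ sends the class of $x\in Z_r$, where $\kappa(x)=\iota^{r-1}(a)$, to the class of $\theta(a)$; one checks the usual way that this is well defined and that $\ker d^r=Z_{r+1}/B_r$, $\mathrm{im}\,d^r=B_{r+1}/B_r$, so $E^{r+1}\cong Z_{r+1}/B_{r+1}$. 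This reduces everything to statements about the single (bi)graded group $E^1$ and the maps $\iota,\theta,\kappa$.

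Next I would prove by induction on $r$ that the product on $E^1$ restricts to $Z_r\cdot Z_r\subseteq Z_r$ and $Z_r\cdot B_r+B_r\cdot Z_r\subseteq B_r$ — hence descends to $E^r=Z_r/B_r$, necessarily by ``multiply and reduce'' — and that $d^r$ is a graded derivation for the descended product. The case $r=1$ needs nothing for the first part ($Z_1=E^1$, $B_1=0$), while $d^1=\theta\kappa$ is a derivation by hypothesis~(2) with $n=0$ (take $a=\kappa(x)$, $b=\kappa(y)$). For the step $r\to r+1$: since $d^r$ is a derivation, the Leibniz rule shows the $E^r$-product carries cycles to cycles and (cycle)$\cdot$(boundary) to boundaries, so it descends to $H(E^r,d^r)=E^{r+1}$, and tracing representatives back to $E^1$ this is again multiply-and-reduce, which is exactly the well-definedness claim at stage $r+1$. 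For the derivation property of $d^{r+1}$, represent $\bar x,\bar y\in E^{r+1}$ by $x,y\in Z_{r+1}$ with $\kappa(x)=\iota^r(a)$, $\kappa(y)=\iota^r(b)$, so $d^{r+1}(\bar x)=\overline{\theta(a)}$ and $d^{r+1}(\bar y)=\overline{\theta(b)}$; hypothesis~(2) with $n=r$ produces $c$ with $\kappa(xy)=\iota^r(c)$ (re-proving $Z_{r+1}\cdot Z_{r+1}\subseteq Z_{r+1}$) and $\theta(c)=\theta(a)y+(-1)^{|x|}x\theta(b)$, which on passing to classes in $E^{r+1}$ reads $d^{r+1}(\bar x\bar y)=d^{r+1}(\bar x)\bar y+(-1)^{|x|}\bar x\,d^{r+1}(\bar y)$. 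Since $d^{r+1}$ has total degree $-1$ the sign $(-1)^{|x|}$ is the correct Koszul sign, so this is a graded derivation.

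Passing to the (co)limit in $r$ gives a product on $E^\infty=Z_\infty/B_\infty$ with $Z_\infty=\bigcap_r Z_r$ and $B_\infty=\bigcup_r B_r$. To compare it with the abutment I would use the pairing $\bar\psi$ on $D^1$: by Lemma~\ref{lem: Pairing psi commutes with iota} the maps $\psi$ (hence $\bar\psi$) commute with $\iota$, so the $\bar\psi_{n,m}$ assemble into a single product on $\ext^*_{A^e}(A,A)=\colim_p D^1_{p,*}$ which is filtered for $F_n=\mathrm{im}\big(\ext^*_{A^e}(A,J(n))\to\ext^*_{A^e}(A,A)\big)$ and which coincides with the standard $HH^*(A)$-multiplication because $\bar\psi_{0,0}$ does. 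Finally, the comparison map of Lemma~\ref{lem: Conditional convergence} between $E^\infty$ and $\mathrm{gr}\,HH^*(A)$ is a map of bigraded algebras: this is a naturality chase whose crux is that the $E^1$-pairing $\bar\mu$ was \emph{defined} in Definition~\ref{def: The pairing mu} as the pairing that $\bar\psi$ induces on the subquotients $\A^n\ba\A^m$ of $J(n)\ba J(m)$, so the square relating $\bar\psi$ and $\bar\mu$ through $\theta$ and $\iota$ already commutes at the level of $A^e$-modules. Under strong convergence (when $A$ is bounded) this map is an isomorphism, and in general it is the comparison required by the notion of multiplicative spectral sequence used here, which completes the verification.

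The genuinely delicate points are the two one expects. One is keeping the Koszul signs in hypothesis~(2) and in the $E^\infty$-identification consistent with the sign conventions fixed in Lemma~\ref{lem: Signs}; this is the one place where a slip propagates everywhere. The other — the only conceptual subtlety — is matching the $E^\infty$-product with the associated-graded product of the abutment given that convergence is only conditional, so that a priori one has a comparison map rather than an isomorphism; this last issue is harmless here because the relevant $\lim^1$-term vanishes, as computed in the proof of Lemma~\ref{lem: Conditional convergence}.
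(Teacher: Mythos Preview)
The paper does not prove this theorem; it is quoted as a result of Massey and used as a black box in the proof of Proposition~\ref{pro: Multiplicativity of the HH spectral sequence}. So there is no proof in the paper to compare against. What you have written is a correct reconstruction of Massey's original argument: the description $E^r\cong Z_r/B_r$ with $Z_r=\kappa^{-1}(\mathrm{im}\,\iota^{r-1})$ and $B_r=\theta(\ker\iota^{r-1})$ is the standard one for a spectral sequence arising from an exact couple, the inductive step (using that $d^r$ is a derivation to push the product from $E^r$ to $E^{r+1}$, then invoking hypothesis~(2) with $n=r$ to get the Leibniz rule for $d^{r+1}$) is exactly Massey's mechanism, and your check that $B_r\subseteq Z_\infty$ so that ``$Z_{r+1}+B_r=Z_{r+1}$'' is the small point one has to remember.

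One comment on scope. Conditions~(1) and~(2) as stated refer only to $E^1$ and the structure maps of the couple; they say nothing about a pairing on $D^1$ or on the abutment. Strictly, they yield the page-by-page statement (each $E^r$ is an algebra, $d^r$ is a derivation, the product on $E^{r+1}$ is induced from $E^r$). Your final paragraph on the $E^\infty$-comparison is correct but uses the paper-specific pairing $\bar\psi$ and Lemma~\ref{lem: Pairing psi commutes with iota}, which lie outside the stated hypotheses of Massey's criterion. That is appropriate here---the paper's own definition of ``multiplicative spectral sequence'' bundles in the comparison with the associated graded of $HH^*(A)$, and the needed compatibility is indeed the content of Definition~\ref{def: The pairing mu}---but it is worth flagging that this part is an addendum to Massey's theorem rather than a consequence of conditions~(1) and~(2) alone.
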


\begin{proposition}
\label{pro: Multiplicativity of the HH spectral sequence}
The pairings $\mu$ and $\psi$ defined above make the spectral sequence $E^r_{*,*}$ into a multiplicative spectral sequence. Namely for every $r$, $E^r_{*,*}$ has an induced multiplication for which $d^r$ is a derivation.
\end{proposition}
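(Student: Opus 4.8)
The plan is to verify the two hypotheses of Massey's criterion as stated just above, since the conclusion (multiplicativity of each $E^r$ with $d^r$ a derivation) then follows formally. Condition (1), that $E^1$ is a graded algebra, is exactly what Lemma~\ref{lem: Associativity of pairing mu} provides: the pairing $\bar\mu = \bigoplus_{n,m}\bar\mu_{n,m}$ makes $E^1_{*,*} = \bigoplus_n \ext^*_{A^e}(A,\A^n)$ into a graded algebra, with unit coming from $1_A \in \ext^0_{A^e}(A,\A^0) = \ext^0_{A^e}(A,A)$ (here $\A^0 = A$). So the real content is condition (2).

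For condition (2), I would start with $x \in E^1$, $y \in E^1$ and $a, b \in D^1$ with $\kappa(x) = \iota^n(a)$ and $\kappa(y) = \iota^n(b)$; without loss of generality $x \in \ext_{A^e}^{-s-1}(A,\A^p)$ and $y \in \ext_{A^e}^{-t-1}(A,\A^u)$ are homogeneous. Apply Lemma~\ref{lem: Representing elements in E1} to each of $x$ and $y$ separately: this produces the commutative ladders with vertical maps $w, f, \tilde x$ (respectively $z, g, \tilde y$), together with maps $\tilde a : \Sigma^{s}\tilde A \to J(p+n+1)$ and $\tilde b : \Sigma^{t}\tilde A \to J(u+n+1)$ representing $a$ and $b$, satisfying $w = \iota^n \tilde a$ and $z = \iota^n \tilde b$. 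Now invoke Lemma~\ref{lem: Representing multiplication on E1}: $\kappa(xy)$ is represented by the map $\psi'_{p,u}\big(f \ba z \coprod_{w\ba z} w\ba g\big)$ out of the pushout $\cone\Sigma^{s}\tilde A \ba \Sigma^{t}\tilde A \coprod_{\Sigma^{s}\tilde A\ba\Sigma^{t}\tilde A} \Sigma^{s}\tilde A \ba \cone\Sigma^{t}\tilde A$, landing in $J(p+u+1)$.

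The key computation is then to factor this representative of $\kappa(xy)$ through $\iota^n$. Using $w = \iota^n\tilde a$ and $z = \iota^n\tilde b$, together with Lemma~\ref{lem: Pairing psi commutes with iota} (so that $\psi$, hence $\psi'$, commutes with $\iota$), one checks that $\psi'_{p,u}(f\ba z \coprod_{w\ba z} w\ba g)$ equals $\iota^n$ applied to a map into $J(p+u+n+1)$ built from $f\ba\tilde b \coprod f\ba\tilde b$-type pieces — concretely, the map $\psi''$ obtained by composing the analogue of the pushout map into $J(p+n+1)\ba J(u)$ and $J(p)\ba J(u+n+1)$ with the appropriate $\psi$ pairings into $J(p+u+n+1)$. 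This defines the class $c \in D^1$ with $\kappa(xy) = \iota^n(c)$. Finally I must check $\theta(c) = \theta(a)y + (-1)^{|x|}x\theta(b)$; this is a diagram chase in which $\theta(a)y$ corresponds to the component $\tilde a \ba \tilde y$ (using that $\theta$ kills the $J(u+1)$-part, leaving $\A^u$) and $x\theta(b)$ to $\tilde x \ba \tilde b$, and the sign $(-1)^{|x|} = (-1)^{s+1}$ is precisely the sign built into $\zeta$ in Lemma~\ref{lem: Signs} governing which leg of the pushout $P$ carries the $-1$.

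I expect the main obstacle to be bookkeeping of the signs and degree shifts in this last step: one has to track the standard isomorphisms $\Sigma(X\ba Y)\cong (\Sigma X)\ba Y \cong X\ba(\Sigma Y)$ and the map $\zeta$ of Lemma~\ref{lem: Signs} through the pushout defining the representative of $\kappa(xy)$, and confirm that the $-1$ lands on the $x\theta(b)$ term with exactly the exponent $|x|$ required by Massey's condition. The commutativity statements themselves (that $\psi'$ commutes with $\iota$, that the pushout maps are natural) are routine given Lemmas~\ref{lem: Pairing psi commutes with iota} and~\ref{lem: Representing multiplication on E1}; it is only the sign reconciliation with Massey's formula that needs care. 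Once condition (2) is established, Massey's theorem gives the induced multiplication on every $E^r$ with $d^r$ a derivation, which is the assertion of the proposition.
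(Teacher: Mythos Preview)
Your proposal is correct and follows essentially the same route as the paper's own proof: verify Massey's two conditions, with condition~(1) handled by Lemma~\ref{lem: Associativity of pairing mu}, and for condition~(2) use Lemma~\ref{lem: Representing elements in E1} to get the ladders for $x$ and $y$, invoke Lemma~\ref{lem: Representing multiplication on E1} for a representative of $\kappa(xy)$, factor it through $\iota^n$ via $w=\iota^n\tilde a$, $z=\iota^n\tilde b$ and Lemma~\ref{lem: Pairing psi commutes with iota} to define $c$, and then check $\theta(c)=\theta(a)y+(-1)^{|x|}x\theta(b)$ using the sign conventions of Lemma~\ref{lem: Signs}. The paper writes out the explicit pushouts $P$ and $B$ and the map $\psi''$ you allude to, and draws the two commutative diagrams that make the factorization and the $\theta$-computation transparent; your sketch matches this exactly (modulo the evident typo ``$f\ba\tilde b \coprod f\ba\tilde b$'' where you mean $f\ba\tilde b \coprod_{\tilde a\ba\tilde b} \tilde a\ba g$).
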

\begin{proof}
In light of Lemma~\ref{lem: Associativity of pairing mu} the first condition of Massey's criterion reduces to verifying that our grading choice is correct, which is easily checked.

For the second condition we represent $x$ and $y$ by diagrams as in Lemma~\ref{lem: Representing elements in E1}:
\[ \xymatrixcompile{
{\Sigma^{s}\tilde{A}} \ar[r] \ar[d]^w & {\cone \Sigma^{s} \tilde{A}} \ar[d]^f \ar[r] & {\Sigma^{s+1}\tilde{A}} \ar[d]^{\tilde{x}} & &
{\Sigma^{t}\tilde{A}} \ar[r] \ar[d]^z & {\cone \Sigma^{t} \tilde{A}} \ar[d]^g \ar[r] & {\Sigma^{t+1}\tilde{A}} \ar[d]^{\tilde{y}} \\
{J(p+1)} \ar[r]^\iota & {J(p)} \ar[r]^\theta & {\A^p} & &
{J(u+1)} \ar[r]^\iota & {J(u)} \ar[r]^\theta & {\A^u} } \]
where $\tilde{x}$ represents $x$ and $\tilde{y}$ represents $y$. In addition, by the same lemma, there are maps $\tilde{a}: \Sigma^{s}\tilde{A} \to J(p+n+1)$ and $\tilde{b}: \Sigma^{t}\tilde{A} \to J(u+n+1)$ representing $a$ and $b$ such that $w=\iota^n \tilde{a}$ and $z=\iota^n b$.

Denote by $P$ the pushout
\[{ J(p) \ba J(u+n+1) \!\!\!\!\!\!\!\!\!\!\!\!\!\!\!\!\!  \coprod_{J(p+n+1)\ba J(u+n+1)} \!\!\!\!\!\!\!\!\!\!\!\!\!\!\!\!\!  J(p+n+1) \ba J(u)}\]
and by $B$ the pushout
\[ {\cone\Sigma^{s}\tilde{A} \ba \Sigma^{t}\tilde{A}
\!\!\!\!\! \coprod_{\Sigma^{s}\tilde{A} \ba \Sigma^{t}\tilde{A}} \!\!\!\!\!
\Sigma^{s}\tilde{A} \ba \cone \Sigma^{t}\tilde{A} }.\]
Let $\psi''$ be the map
\[ \psi_{p,u+n+1} \!\!\!\!\!\!\!\!\!\! \coprod_{\psi_{p+n+1,u+n+1}} \!\!\!\!\!\!\!\!\!\! \psi_{p+n+1,u}: P \to J(p+u+n+1) .\]
We can now define a map $\tilde{c}: B \to J(p+u+n+1)$ by
\[ \tilde{c}= \psi''(f \ba \tilde{b} \coprod_{\tilde{a} \ba \tilde{b}} \tilde{a} \ba g) . \]
We shall show that the morphism $c\in\ext_{A^e}^{-s-t-1}(A,J(p+u+n+1))$ which is represented by $\tilde{c}$ is the desired element in $D^1$. As before, we are implicitly using the isomorphism $B \cong \Sigma^{s+t+1} (\tilde{A} \ba \tilde{A})$ of Lemma~\ref{lem: Signs}.

It is easy to verify that the following diagram commutes
\[ \xymatrixcompile{
{B} \ar[d]_{f \ba \tilde{b} \coprod_{\tilde{a} \ba \tilde{b}} \tilde{a} \ba g} \ar[dr]^{(f \ba z \coprod_{w \ba z} w \ba g)} \\
{P} \ar[d]_{\psi''} \ar[r]^-{\iota^n} &
{\displaystyle J(p) \ba J(u+1) \!\!\!\!\!  \coprod_{J(p+1)\ba J(u+1)} \!\!\!\!\!  J(p+1) \ba J(u)} \ar[d]^{\psi'} \\
{J(p+u+n+1)} \ar[r]^{\iota^n}  & {J(p+u+1)} .
}\]
Hence, by Lemma~\ref{lem: Representing multiplication on E1}, we see that $\kappa(xy)=\iota^n c$.

It remains to show that $\theta(c)=\theta(a)y+(-1)^{|x|}x\theta(b)$. There is the following commutative diagram
\[ \xymatrixcompile{
{J(p) \ba J(u+n+1)} \ar[d] & {J(p+n+1) \ba J(u+n+1)} \ar[r]^-{1\otimes \iota^{n+1}} \ar[l]_-{\iota^{n+1} \otimes 1} \ar[d] & {J(p+n+1) \ba J(u)} \ar[d] \\
{\A^p \ba J(u+n+1)} & 0 \ar[r] \ar[l] & {J(p+n+1) \ba \A^u} .
} \]
Taking the pushout of each row yields a map $\tau:P \to \A^p \ba J(u+n+1) \oplus J(p+n+1) \ba \A^u$. We leave it to the reader to check that the following diagram commutes
\[ \xymatrixcompile{
{B} \ar[r]^-{f \ba \tilde{b} \coprod_{\tilde{a} \ba \tilde{b}} \tilde{a} \ba g} \ar[ddr]^{\tilde{c}} & {P} \ar[r]^-{\tau} \ar[dd]^{\psi''} &
 {\displaystyle \binom{\A^p \ba J(u+n+1)\oplus}{J(p+n+1) \ba \A^u}} \ar[d]^{1\ba \theta \oplus \theta \ba 1} \\
& &
 {\displaystyle \binom{\A^p \ba \A^{u+n+1}\oplus}{\A^{p+n+1} \ba \A^u}} \ar[d]^{\mu+\mu}\\
&
 {J(s+t+n+1)} \ar[r]^{\theta} & {A^{s+t+n+1}} .
} \]
>From this diagram one sees that $\theta \tilde{c}$ is equal to $\mu(\theta\tilde{a} \ba \tilde{y})+\mu(\tilde{x} \ba \theta\tilde{b})$. Using the sign convention from Lemma~\ref{lem: Signs} we see that $\theta (c) = \theta(a)y+(-1)^{|x|}x\theta(b)$.
\end{proof}

\section{The Hochschild cohomology shearing map}
\label{sec: The Hochschild cohomology shearing map}

In this section we give three equivalent definitions for the Hochschild cohomology shearing mapping
\[ \shr: HH^*(A) \to \ext^*_{A^e}(A,k) . \]
This morphism is well known and there is no claim to originality here. However, this is only one possible variant of the shearing map; a more thorough discussion can be found in \cite{BensonGreenleesShamir}.

\subsection{Two descriptions of the shearing map}
\label{sub: Two descriptions of the shearing map}
Let $A_l$ denote the dga $A \otimes k$ and let $\lambda:A^e \to A_l$ be the obvious map of dgas. Clearly $A_l$ is isomorphic to $A$. The map $\lambda$ induces an adjunction $F:A^e\modules \rightleftarrows A_l\modules:G$ where the left adjoint is $F(M)=M\otimes_A k$ and the right adjoint is $G(N) = N \cong N\otimes k$ with the left action of $A$ on $N$ and the right action of $A$ on $k$.

Since $G$ preserves fibrations and weak equivalences, $F$ and $G$ constitute a Quillen pair and thereby yield a pair of adjoint functors on the derived categories
\[ \F: \Derived(A^e) \rightleftarrows \Derived(A_l):\G . \]
It is easy to see that $\F(A)$ is isomorphic in $\Derived(A_l)$ to $k$.

\begin{definition}
\label{def: Sheering map}
Choose an isomorphism $\F(A) \xrightarrow{\varphi} k$ in $\Derived(A_l)$. Define the \emph{shearing map for $A$} to be the morphism $\shr:\ext^*_{A^e}(A,A) \to \ext^*_{A_l}(k,k)$ given by the composition
\[ \ext^*_{A^e}(A,A) \xrightarrow{\F} \ext^*_{A^l}(\F(A),\F(A)) \xrightarrow{p} \ext^*_{A_l}(k,k) , \]
where $p(f)=\varphi f \varphi^{-1}$. We note two immediate properties of the shearing map. First, it is clear that the shearing map is a map of graded rings. Second, when $A$ is simply connected the definition is independent of the choice of $\varphi$ because, in this case, the automorphism group of $k$ in $\Derived(A_l)$ is the commutative group $k^*$, which is also central in $\ext^*_{A_l}(k,k)$.
\end{definition}

Let $\eta_A:A \to \G\F(A)$ be the unit map of this adjunction. We can now define another map $\alpha:\ext^*_{A^e}(A,A) \to \ext^*_{A_l}(k,k)$, as the composition
\[ \ext^*_{A^e}(A,A) \xrightarrow{\eta_A} \ext^*_{A^e}(A,\G\F(A)) \cong \ext^*_{A^l}(\F(A),\F(A)) \cong \ext^*_{A_l}(k,k) , \]
where the left isomorphism comes from the adjunction. As above, when $A$ is simply connected the map $\alpha$ does not depend on our choice of isomorphism $\F(A)\cong k$. From classical category theory we learn that the map $\ext^*_{A^e}(A,A) \xrightarrow{\F} \ext^*_{A^l}(\F(A),\F(A))$ is equal to the composition $\ext^*_{A^e}(A,A) \xrightarrow{\eta_A} \ext^*_{A^e}(A,\G\F(A)) \xrightarrow{\cong} \ext^*_{A^l}(\F(A),\F(A))$. Hence we conclude that $\alpha$ and $\shr$ are equal (for the same isomorphism $\varphi$).

\subsection{The unit map}
For what follows we need to identify the unit map $\eta_A$. First we define a map $\mu:k \ba k \to k$ to be the composition of the natural map $k \ba k \to k \otimes_A k$ from Lemma~\ref{lem: Map from derived A-tensor to non derived} with the isomorphism $k \otimes_A k \cong k$. One can think of $\mu$ as extending the pairing $\mu_{p,q}$ of Definition~\ref{def: The pairing mu}.

Next we choose a model for the derived functor of $F$, set
\[\mathbf{L}F(M)=F(M\ba A) \cong M\ba k , \]
we leave it to the reader to ascertain this indeed yields the derived functor of $F$. Now $\mathbf{L}F(A)=F(A\ba A)$ and $GF(A\ba A)=A\ba k$ and the unit map $A\ba A \to GF(A\ba A)$ is clearly $1 \ba \aug$. Commutation of the following diagram
\[ \xymatrixcompile{
{A\ba A} \ar[r]^{1\ba \aug} \ar[d]^\psi & {A\ba k} \ar[d]^{\mu(\aug \ba 1)} \\
{A} \ar[r]^\aug & {k}
}\]
shows that $\shr$ is equal to the composition
\[ \ext^*_{A^e}(A,A) \xrightarrow{\aug} \ext^*_{A^e}(A,k) \cong \ext^*_{A_l}(k,k) . \]
Since the map $I:\ext^*_{A^e}(A,A) \to \ext^*_{A_l}(k,k)$ defined by \cite{FelixThomasVPHochschild} is induced by $\ext_{A^e}^*(A,\aug)$ we have proven
\begin{lemma}
\label{lem: Sheering map as in FTVP paper}
The shearing map $\shr:\ext^*_{A^e}(A,A) \to \ext^*_{A_l}(k,k)$ is equal to the morphism $I$ defined in \cite{FelixThomasVPHochschild}.
\end{lemma}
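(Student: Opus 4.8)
The plan is to collect the facts established in Section~\ref{sub: Two descriptions of the shearing map} and in the discussion of $\eta_A$ preceding this lemma. That discussion already shows that $\shr$ coincides with the map $\alpha$ obtained by applying $\ext^*_{A^e}(A,-)$ to the derived unit $\eta_A\colon A\to\G\F(A)$ and then transporting along the adjunction isomorphism $\ext^*_{A^e}(A,\G\F(A))\cong\ext^*_{A_l}(\F(A),\F(A))\cong\ext^*_{A_l}(k,k)$. So the task reduces to making $\eta_A$ explicit and pushing it through these isomorphisms, landing on the map induced by $\ext^*_{A^e}(A,\aug)$, which is exactly the map $I$ of \cite{FelixThomasVPHochschild}.

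First I would pin down the model $\mathbf{L}F(M)=F(M\ba A)\cong M\ba k$: since $F=-\otimes_A k$ is a left adjoint it commutes with the colimits defining realization, whence $F(M\ba A)=(M\ba A)\otimes_A k=M\ba k$, and $M\ba k$ is a model for $M\otimes_A^{\mathbf L}k=\mathbf{L}F(M)$ by Lemma~\ref{lem: Map from derived A-tensor to non derived}. Taking $\tilde A=A\ba A$ as the cofibrant replacement of $A$, this gives $\mathbf{L}F(A)=F(\tilde A)$ and $\G\mathbf{L}F(A)=A\ba k$, with the derived unit $\eta_A$ represented by $1\ba\aug\colon A\ba A\to A\ba k$ over $e\colon A\ba A\xrightarrow{\sim}A$. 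I would then verify that the square displayed just before the lemma --- left vertical $\psi\colon A\ba A\to A$, right vertical $\mu(\aug\ba 1)\colon A\ba k\to k$, top $1\ba\aug$, bottom $\aug$ --- commutes on a generic element $a[a_1|\cdots|a_p]a'$, which is immediate from $\psi=\psi_{0,0}$ being the realized multiplication and $\mu$ being the realization of the pairing $\mu_{p,q}$ of Definition~\ref{def: The pairing mu}. Reading $\ext^*_{A^e}(A,-)$ off this square, and taking the equivalence $\mu(\aug\ba 1)$ as the chosen isomorphism $\varphi\colon\F(A)\to k$, identifies $\shr$ with the composite $\ext^*_{A^e}(A,A)\to\ext^*_{A^e}(A,k)\cong\ext^*_{A_l}(k,k)$ induced by $\aug$; since this is the defining formula for $I$, the lemma follows.

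The genuine content is (i) justifying $F(-\ba A)$ as a model for $\mathbf{L}F$ --- essentially the interchange of $F$ with realization together with a second application of Lemma~\ref{lem: Map from derived A-tensor to non derived} --- and (ii) checking that the square commutes strictly; I expect (ii), where the signs of the bar construction from Section~\ref{sub: Signs} enter, to be the main place to be careful. One bookkeeping point worth noting: $\shr$ a priori depends on the choice of $\varphi\colon\F(A)\to k$, whereas $I$ has no such choice built in; the resolution is to take $\varphi=\mu(\aug\ba 1)$ coming from the model above, which makes the identification strict, and when $A$ is simply connected the choice is immaterial anyway by the observation in Definition~\ref{def: Sheering map}.
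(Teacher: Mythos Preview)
Your proposal is correct and follows essentially the same route as the paper: identify the derived unit $\eta_A$ with $1\ba\aug$ on the model $\tilde A=A\ba A$, use the commuting square with $\psi$ and $\mu(\aug\ba 1)$ to rewrite $\shr$ as $\ext^*_{A^e}(A,\aug)$ followed by the adjunction isomorphism, and observe that this is the definition of $I$. Your added remarks on justifying $F(-\ba A)$ as a model for $\mathbf{L}F$ and on the choice of $\varphi$ are sound elaborations, though the sign issues from Section~\ref{sub: Signs} do not actually enter here since no suspensions are involved in the square.
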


In addition we can now identify the morphism adjoint to a given element $g\in \ext^*_{A_l}(k,k)$. The map $\mu(\aug \ba 1):A \ba k \to k$, is clearly a cofibrant replacement of $k$. We represent $g$ by the map $\tilde{g}: A\ba k \to \Sigma^n k$. By classical category theory the adjoint map to $\tilde{g}$ is $g(1\ba \aug):A\ba A \to \Sigma^n k$.

\subsection{The adjunction}
To specify the adjunction we need to identify the counit map $\epsilon_k:\F\G(k) \to k$ as well.
\begin{lemma}
The counit $\epsilon_k:\F\G(k) \to k$ map is represented by $\mu: k \ba k \to k$.
\end{lemma}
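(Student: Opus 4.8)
The plan is to compute the derived counit in the model for $\mathbf{L}F$ fixed in the preceding subsection and then recognise the result as $\mu$.

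First I would observe that $\G=\mathbf{R}G$ coincides with $G$, since $G$ preserves all weak equivalences and every $A^e$-module is fibrant; hence $\G(k)$ is the $A^e$-module $k$ with both copies of $A$ acting through the augmentation, and with the model $\mathbf{L}F(M)=F(M\ba A)\cong M\ba k$ one gets $\F\G(k)\cong(k\ba A)\otimes_A k\cong k\ba k$. I would also record that $\lambda:A^e\to A_l$ is the surjection $1\otimes\aug$, so that $G$ is fully faithful and the underived counit $\varepsilon:FG\Rightarrow\mathrm{id}$ is an isomorphism; explicitly $\varepsilon_k:FG(k)=k\otimes_A k\to k$ is the canonical isomorphism $y\otimes x\mapsto xy$.

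Next I would invoke the standard recipe for the counit of a derived adjunction: $\epsilon_k:\F\G(k)\to k$ is represented by $F(q)$ followed by $\varepsilon_k$, where $q:Q\xrightarrow{\sim}\G(k)$ is a cofibrant replacement. Taking $Q=k\ba A$ and $q=e:k\ba A\to k$, this presents $\epsilon_k$ as the composite $(k\ba A)\otimes_A k\xrightarrow{e\otimes_A 1_k}k\otimes_A k\xrightarrow{\varepsilon_k}k$, i.e.\ after the identification $(k\ba A)\otimes_A k\cong k\ba k$ a map $k\ba k\to k$.

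Finally I would identify this map with $\mu$. Recall that $e:k\ba A\to k$ is by definition the natural map $k\ba A\to k\otimes_A A$ of Lemma~\ref{lem: Map from derived A-tensor to non derived} followed by the unit isomorphism $k\otimes_A A\cong k$; using naturality of that map in the second variable (applied to $\aug:A\to k$) together with the usual coherences for $\otimes_A$, one sees that $e\otimes_A 1_k$, read through the canonical identifications $(k\ba A)\otimes_A k\cong k\ba k$ and $(k\otimes_A A)\otimes_A k\cong k\otimes_A k$, becomes exactly the natural map $k\ba k\to k\otimes_A k$; postcomposing with $\varepsilon_k$, which is the canonical isomorphism $k\otimes_A k\cong k$, reproduces $\mu$ by its definition in the preceding subsection. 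If a direct argument is preferred, it suffices to evaluate on a generic element $1[a_1|\cdots|a_p]a\otimes x$: both maps send it to $\aug(a)\,x$ when $p=0$ and to $0$ when $p>0$. The only delicate part is the bookkeeping of the canonical isomorphisms $(k\ba A)\otimes_A k\cong k\ba k$, $k\otimes_A A\cong k$ and $k\otimes_A k\cong k$, and checking that $k\ba A$ is an admissible cofibrant replacement for computing both $\mathbf{L}F$ and its derived counit; this is routine but is where all the content sits.
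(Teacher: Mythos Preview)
Your proposal is correct and follows essentially the same route as the paper: decompose the derived counit as the comparison map $\mathbf{L}F(G(k))\to FG(k)$ (i.e.\ $F$ applied to the cofibrant replacement $e:k\ba A\to k$) followed by the underived counit $\varepsilon_k:k\otimes_A k\cong k$, then identify the result with $\mu$. The paper dispatches the final identification in one line (``from the definition it is easy to see that $l_{G(k)}$ is $\mu$''), whereas you spell out the naturality and the generic-element check; otherwise the arguments coincide.
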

\begin{proof}
The counit map is the composition $\mathbf{L}F (G(k)) \xrightarrow{l_{G(k)}} FG(k) \xrightarrow{\varepsilon_k} k$ where $l$ is the natural map $\mathbf{L}F \to F$ and $\varepsilon$ is the counit for the adjunction of $F$ and $G$. Now $\varepsilon_k$ is simply the isomorphism $k\otimes k \cong k$. From the definition it is easy to see that $l_{G(k)}$ is $\mu$.
\end{proof}

\begin{corollary}
\label{cor: The adjunction}
\begin{enumerate}
\item Let $f \in \ext^*_{A^e}(A,k)$ be represented by $\tilde{f}:\tilde{A} \to \Sigma^n k$, then the adjoint is represented by $\mu (\tilde{f} \ba k)$.
\item Let $g \in \ext^*_{A_l}(k,k)$ be represented by $\tilde{g}:A\ba k \to \Sigma^n k$, then the adjoint is represented by $g(1\ba \aug)$.
\end{enumerate}
\end{corollary}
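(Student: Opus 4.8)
The plan is to push the standard formulas for an adjoint morphism through the identifications of the unit $\eta$ and counit $\epsilon$ of the derived adjunction $\F\dashv\G$ that were obtained just above. Recall that under this adjunction the morphism in $\Derived(A^e)$ adjoint to $g\in\ext^n_{A_l}(k,k)=\hom_{\Derived(A_l)}(\F(A),\Sigma^n k)$ is $\G(g)\circ\eta_A\colon A\to\G\F(A)\to\G(\Sigma^n k)$, while the morphism in $\Derived(A_l)$ adjoint to $f\in\ext^n_{A^e}(A,k)=\hom_{\Derived(A^e)}(A,\G(\Sigma^n k))$ is $\epsilon_{\Sigma^n k}\circ\F(f)\colon\F(A)\to\F\G(\Sigma^n k)\to\Sigma^n k$.

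Part (2) is then immediate from two facts already in hand. The derived unit $\eta_A\colon A\to\G\F(A)$ was identified with the map $1\ba\aug\colon A\ba A\to A\ba k$, viewed as a map of $A^e$-modules together with the cofibrant replacement $\tilde A=A\ba A\xrightarrow{e}A$; and since $\G$ is just restriction of scalars along $A^e\to A_l$, the morphism $\G(g)$ is again represented by the chain map $\tilde g\colon A\ba k\to\Sigma^n k$. Composing the two shows that the adjoint of $g$ is represented by $\tilde g(1\ba\aug)$ — this is precisely the computation of the adjoint of $\tilde g$ recorded right after the identification of the unit map.

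For part (1) I will use the functorial model $\mathbf{L}F(M)=F(M\ba A)$ of the derived functor and apply it to the roof $A\xleftarrow{e}\tilde A\xrightarrow{\tilde f}\Sigma^n k$ representing $f$. Under the natural isomorphism $F(M\ba A)=(M\ba A)\otimes_A k\cong M\ba k$ this identifies $\F(f)$ with the roof $A\ba k\xleftarrow{\,e\ba k\,}\tilde A\ba k\xrightarrow{\,\tilde f\ba k\,}\Sigma^n(k\ba k)$, in which $e\ba k$ is an equivalence, so that $\tilde A\ba k$ serves as a model for $\F(A)$. By the preceding lemma the counit $\epsilon_{\Sigma^n k}$ is $\Sigma^n\mu\colon\Sigma^n(k\ba k)\to\Sigma^n k$, and composing gives that the adjoint of $f$ is represented, in the model $\F(A)=\tilde A\ba k$, by $\mu(\tilde f\ba k)$.

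The only step requiring genuine care — and thus the main obstacle, though it is routine rather than hard — is checking that the functorial model $\mathbf{L}F(-)=F(-\ba A)$, applied to the roof above, really produces the displayed roof for $\F(f)$: one must verify that the isomorphism $(M\ba A)\otimes_A k\cong M\ba k$ intertwines $F(e\ba A)$ with $e\ba k$ and $F(\tilde f\ba A)$ with $\tilde f\ba k$, and is compatible with the identifications of $A\ba k$ and $\tilde A\ba k$ as models for $\F(A)$. This amounts to a short computation on the generators $m[a]n$ of the bar construction via the remark following Lemma~\ref{lem: Map from derived A-tensor to non derived}; since each of the maps $e$, $\aug$, $\mu$, $\tilde f$, $\tilde g$ has degree zero, no signs enter.
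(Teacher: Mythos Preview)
Your proposal is correct and follows exactly the approach the paper intends: the corollary is stated without proof because part~(2) was already recorded verbatim in the paragraph identifying the unit map, and part~(1) is the immediate consequence of the preceding lemma identifying the counit as $\mu$, via the classical formula $\epsilon\circ\F(f)$ for the adjoint. Your careful tracking of the model $\mathbf{L}F(-)=F(-\ba A)\cong(-)\ba k$ on the roof representing $f$ is the expected unwinding, and your remark that $\tilde A\ba k$ (rather than $A\ba k$) serves here as the cofibrant model for $\F(A)$ is a correct and necessary observation that the paper leaves implicit.
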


\subsection{The induced pairing}
Since $\ext^*_{A^e}(A,k)$ is isomorphic to $\ext^*_{A_l}(k,k)$, there is a graded algebra structure on $\ext^*_{A^e}(A,k)$. We detail this structure.

\begin{definition}
Define a pairing $m:\ext^i_{A^e}(A,k) \otimes \ext^j_{A^e}(A,k) \to \ext^{i+j}_{A^e}(A,k)$ in the following way. Let $\tilde{f}:\tilde{A} \to \Sigma^i k$ represent an element $f\in\ext^i_{A^e}(A,k)$ and let $\tilde{g}:\tilde{A} \to \Sigma^j k$ represent an element $g\in\ext^j_{A^e}(A,k)$. Let $m(f\otimes g)$ be the morphism represented by the composition
\[ \xymatrixcompile{
{\tilde{A}} \ar[r]^-{\sim} & {\tilde{A} \ba \tilde{A}} \ar[r]^-{\tilde{f} \ba \tilde{g}} & {\Sigma^i k \ba \Sigma^j k} \ar[r]^-{\mu} & \Sigma^{i+j} k . } \]
\end{definition}

\begin{lemma}
\label{lem: The induced pairing}
Under the isomorphism $\ext^*_{A^e}(A,k)\cong \ext^*_{A_l}(k,k)$ given by the adjunction, the pairing $m$ corresponds to the multiplication on $\ext^*_{A_l}(k,k)$.
\end{lemma}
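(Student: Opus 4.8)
The plan is to imitate the proof that $\bar{\psi}_{0,0}$ is the standard multiplication on $HH^*(A)$, i.e.\ to run an Eckmann--Hilton argument, with Corollary~\ref{cor: The adjunction} supplying concrete models on the category of $A^e$-modules. Write $E=\ext^*_{A^e}(A,k)$. Besides the pairing $m$, the group $E$ carries a second product: transport the Yoneda composition of $\ext^*_{A_l}(k,k)$ across the adjunction isomorphism $\Phi\colon E\xrightarrow{\cong}\ext^*_{A_l}(k,k)$, and call the result $c$. Both $m$ and $c$ are associative (the former by associativity of $\mu$ together with coassociativity of the diagonal on $\tilde A$, as in Lemmas~\ref{lem: Associativity of pairing psi} and~\ref{lem: Associativity of pairing mu}; the latter by transport), and once one knows they share a two-sided unit and satisfy the interchange law
\[ m\bigl((x\,c\,x')\otimes(y\,c\,y')\bigr)=m(x\otimes y)\,c\,m(x'\otimes y'),\qquad x,x',y,y'\in E, \]
the Eckmann--Hilton argument forces $m=c$, which is the claim.

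First I would pin down the common unit. The identity $1_k\in\ext^0_{A_l}(k,k)$ is represented by the cofibrant replacement $\mu(\aug\ba 1)\colon A\ba k\to k$, so by Corollary~\ref{cor: The adjunction}(2) the element $\Phi^{-1}(1_k)$ is represented by $\mu(\aug\ba 1)\circ(1\ba\aug)\colon \tilde A\to k$, which by the commuting square of Section~\ref{sec: The Hochschild cohomology shearing map} equals $\aug\circ e$; thus the unit of $c$ is the class $\bar{\aug}\in E$ of the augmentation. That $\bar{\aug}$ is also a two-sided unit for $m$ is checked by unwinding the definition of $m$: the composite $\tilde A\xrightarrow{\sim}\tilde A\ba\tilde A\xrightarrow{(\aug e)\ba\tilde g}k\ba\Sigma^n k\xrightarrow{\mu}\Sigma^n k$ kills every bar degree except the zeroth, where it reduces to $\tilde A\to\tilde A\otimes_A\tilde A\xrightarrow{(\aug e)\otimes_A\tilde g}\Sigma^n k$; since $\aug e$ represents the counit of the $A^e$-coalgebra $A$ and the diagonal on $\tilde A$ is inverse in $\Derived(A^e)$ to the equivalence $e\ba e$, this composite is homotopic to $\tilde g$.

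The heart of the argument is the interchange law. Using Corollary~\ref{cor: The adjunction} I would rewrite both $m$ and $c$ as products of $A^e$-module maps out of $\tilde A$: in each case the product of two classes is a composite built from the diagonal $\delta\colon\tilde A\to\tilde A\ba\tilde A$, maps of the form $u\ba v$, the pairing $\mu\colon k\ba k\to k$, and (for $c$) the lifts through $\mu(\aug\ba 1)$ needed to realise Yoneda composition. The interchange identity then drops out of the bifunctoriality of $\ba$ (namely $(u_1\ba v_1)\circ(u_2\ba v_2)=(u_1u_2)\ba(v_1v_2)$), the coassociativity of $\delta$, and the associativity and naturality of $\mu$ and of the unit maps $e$ and $1\ba\aug$ --- exactly the ``simple exercise'' cited for $\bar{\psi}_{0,0}$; the suspension signs are matched using Section~\ref{sub: Signs} and Lemma~\ref{lem: Signs}. (An alternative, Eckmann--Hilton-free route: given $g_1,g_2\in\ext^*_{A_l}(k,k)$ with representatives $\tilde g_i\colon A\ba k\to\Sigma^{n_i}k$, compute $g_1\cdot g_2$ using the lift of $\tilde g_2$ furnished by the standard comultiplication of the bar construction, then apply Corollary~\ref{cor: The adjunction}(2) and compare directly with $m(\Phi^{-1}g_1\otimes\Phi^{-1}g_2)$.)

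The main obstacle is precisely the bookkeeping of that last step: identifying $\delta$ explicitly --- it is $(e\ba e)^{-1}$ in $\Derived(A^e)$, realised on the bar construction by the signed version of the splitting $a[a_1|\cdots|a_p]b\mapsto\sum_{i=0}^{p}\bigl(a[a_1|\cdots|a_i]1\bigr)\ba\bigl(1[a_{i+1}|\cdots|a_p]b\bigr)$ --- choosing the Yoneda lift compatibly with $\delta$, and carrying the suspension signs of Section~\ref{sub: Signs} through the comparison. None of this is conceptually deep, but, as with the corresponding facts for $\bar{\psi}_{0,0}$, it must be done carefully.
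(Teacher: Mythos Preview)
Your main approach via Eckmann--Hilton cannot work. If the interchange law you state held, the Eckmann--Hilton argument would force both $m$ and $c$ to be not only equal but graded-commutative. But $c$ is by definition the transport of Yoneda composition on $\ext^*_{A_l}(k,k)\cong\ext^*_A(k,k)$, and for a general simply connected dga this ring is highly non-commutative --- when $A=\chains^*(X;k)$ it is $H_*(\Omega X;k)$ with the Pontryagin product. Hence the interchange law is simply false in general. The analogy with $\bar{\psi}_{0,0}$ breaks down because there both products live on $\ext^*_{A^e}(A,A)$ and one of them is honest composition of endomorphisms of a single object; here the elements are morphisms $A\to k$ in $\Derived(A^e)$, the product $c$ is only defined after passing through the adjunction, and the ``simple exercise'' establishing the interchange does not go through.

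Your parenthetical ``Eckmann--Hilton-free route'' is essentially the paper's proof and is the correct one. The paper starts from $f,g\in\ext^*_{A^e}(A,k)$, passes to the adjoints $\mu(\tilde f\ba k)$ and $\mu(\tilde g\ba k)$ via Corollary~\ref{cor: The adjunction}(1), and represents their composition in $\ext^*_{A_l}(k,k)$ by the map $\mu(1\ba\mu)(\tilde f\ba\tilde g\ba 1_k)$ on $\tilde A\ba\tilde A\ba k$. Associativity of $\mu$ then rewrites this as $\mu\bigl(\mu(\tilde f\ba\tilde g)\ba 1_k\bigr)$, which by Corollary~\ref{cor: The adjunction}(1) again is the adjoint of $m(f\otimes g)$. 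No explicit diagonal $\delta$ on $\tilde A$ or bar-splitting formula is needed; the whole argument is three lines.
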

\begin{proof}
By Corollary~\ref{cor: The adjunction} we need first to represent the composition of $\mu(f\ba k)$ with $\mu(g\ba k)$. One easily sees that this composition is represented by
\[(*) \quad {\tilde{A} \ba \tilde{A} \ba k} \xrightarrow{1 \ba \tilde{g} \ba 1} {\tilde{A} \ba \Sigma^j k \ba k} \xrightarrow{1\ba \mu} {\tilde{A} \ba \Sigma^j k} \xrightarrow{\tilde{f} \ba 1} {\Sigma^i k \ba \Sigma^j k} \xrightarrow{\mu} \Sigma^{i+j} k . \]
This composition is equal to $\mu(1\ba \mu)(\tilde{f} \ba \tilde{g} \ba \tilde 1_k)$. It is easy to see that $\mu$ satisfies the associativity relation $\mu(1\ba \mu) = \mu(\mu \ba 1)$. This, together with the associativity of $\ba$, shows that the composition $(*)$ is equal to $\mu(\mu(\tilde{f} \ba \tilde{g}) \ba 1_k)$, whose adjoint is $\mu(\tilde{f} \ba \tilde{g})$ by Corollary~\ref{cor: The adjunction}.
\end{proof}

\subsection{The third description of the shearing map}
\label{sub: The third description of the shearing map}
Every element $x \in HH^n(A)$ induces a natural transformation of functors $\zeta(x):1_{\Derived(A)} \to \Sigma^{n}1_{\Derived(A)}$ which we now describe. For an $A$-module $M$ the morphism $\zeta(x)_M$ is represented by
\[ M \xleftarrow{u} \tilde{A} \ba M \xrightarrow{\tilde{x} \ba M} \Sigma^n \tilde{A} \ba M \xrightarrow{u} M  \]
where $\tilde{x}:\tilde{A} \to \Sigma^n \tilde{A}$ represents $x$ and $u=e(e\ba 1_M)$. Addition and multiplication of elements in $HH^*(A)$ become addition and composition of such natural transformations. In this way we get a map $\zeta$ of graded commutative rings from $HH^*(A)$ to the graded commutative ring of natural transformations $\{1_{\Derived(A)} \to \Sigma^{n}1_{\Derived(A)} \}_n$. This is a well known map whose target is called the \emph{centre of $\Derived(A)$}.

We can now define a third map $HH^*(A) \to \ext^*_A(k,k)$ by $x \mapsto \zeta(x)_k$. It is easy to see this map is the shearing map as defined in Definition~\ref{def: Sheering map}. From this description it is also clear that the image of the shearing map lies in the graded commutative centre of $\ext^*_A(k,k)$.

\section{Identifying the $E^2$-term}
\label{sec: Identifying the $E^2$-term}

To identify the $E^2$-term of the spectral sequence we must assume $A$ is simply connected. As we shall see, this assumption gives the differential on the $E^1$-term a recognizable form.

\subsection{Simple connectedness}
The augmentation $A^e \to k$ induces adjoint functors
\[F:\Derived(A^e) \leftrightarrows \Derived(k):G .\]
By a result of Dwyer, Greenlees and Iyengar~\cite[Proposition 3.9]{DwyerGreenleesIyengar} the bimodule $A^n$ is isomorphic in $\Derived(A^e)$ to $G(V)$ for some $k$-vector space $V$. But this isomorphism is not natural. Worse still the morphisms $\delta=\delta^n:A^n \to A^{n+1}$ given by the composition $\theta\kappa$ might not be induced by morphisms in $\Derived(k)$. However, when $A$ is simply connected we can show that the morphisms $\delta^n$ are in the image of $G$. We start with the following well known property.

\begin{lemma}
If $A$ is simply connected then so is $A^e$.
\end{lemma}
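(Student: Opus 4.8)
The plan is to check the three conditions in the definition of simple connectedness for $A^e$ one at a time; the first two are immediate and the third reduces to a Künneth computation. First, $A^e = A \otimes_k A^{\op}$ is coconnective and augmented: its underlying graded ring vanishes in negative cohomological degrees because this holds for $A$ and for $A^{\op}$, and by the Künneth theorem over the field $k$ we have $H^0(A^e) \cong H^0(A) \otimes_k H^0(A^{\op}) \cong k \otimes_k k \cong k$, using that $A^{\op}$ has the same underlying cochain complex as $A$ so that $H^0(A^{\op}) = k$; the map $\aug \otimes \aug \colon A^e \to k$ is an augmentation.

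For the remaining condition I would use the equivalent statement $\tor^{A^e}_0(k,k) = k$. Picking semifree resolutions $P \xrightarrow{\sim} k$ over $A$ and $Q \xrightarrow{\sim} k$ over $A^{\op}$, the complex $P \otimes_k Q$ is a semifree resolution of $k$ over $A^e = A \otimes_k A^{\op}$, and rearranging tensor factors gives a natural isomorphism of complexes $k \otimes_{A^e} (P \otimes_k Q) \cong (k \otimes_A P) \otimes_k (k \otimes_{A^{\op}} Q)$. Taking homology and applying Künneth over $k$ yields
\[ \tor^{A^e}_n(k,k) \cong \bigoplus_{i+j=n} \tor^A_i(k,k) \otimes_k \tor^{A^{\op}}_j(k,k) . \]
I would also record the natural isomorphism $M \otimes_{A^{\op}} N \cong N \otimes_A M$, which gives $\tor^{A^{\op}}_j(k,k) \cong \tor^A_j(k,k)$.

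Finally I would invoke the standard fact that for a coconnective augmented dga $B$ the graded vector space $\tor^B_*(k,k)$ is concentrated in non-positive homological degrees: the augmentation $B \to k$ is an isomorphism on $H^0$, hence $H^0$ of the augmentation ideal of $B$ vanishes, and so $k$ admits a semifree resolution over $B$ built by attaching generators only in non-positive homological degrees. Applying this to $B = A$ and to $B = A^{\op}$, the only term surviving in the displayed formula when $n = 0$ is $i = j = 0$, so $\tor^{A^e}_0(k,k) \cong \tor^A_0(k,k) \otimes_k \tor^{A^{\op}}_0(k,k)$; since $A$ is simply connected and by the opposite-algebra symmetry above this is $k \otimes_k k \cong k$, which completes the proof.

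The Künneth bookkeeping is routine; the one input that genuinely needs the coconnectivity hypothesis — and which I expect to be the main point to get right — is the vanishing of $\tor^B_n(k,k)$ for $n > 0$, without which the degree-zero part of the Künneth formula could a priori pick up cross terms $\tor^A_i(k,k) \otimes_k \tor^{A^{\op}}_{-i}(k,k)$ with $i \neq 0$.
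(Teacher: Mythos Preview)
Your argument is correct and follows essentially the same route as the paper: both build a resolution of $k$ over $A^e$ as a tensor product of resolutions over $A$ and $A^{\op}$, then apply K\"unneth to identify $\tor^{A^e}_*(k,k)$ with $\tor^A_*(k,k)\otimes_k\tor^A_*(k,k)$ and read off the degree-zero piece. Your write-up is in fact more careful than the paper's on two points: you explicitly verify that $A^e$ is coconnective and augmented, and you isolate the vanishing $\tor^B_n(k,k)=0$ for $n>0$ needed to rule out cross terms $\tor^A_i\otimes\tor^A_{-i}$ with $i\neq 0$ in degree zero---the paper's ``in particular $\tor_0^{A^e}(k,k)=k$ if and only if $\tor_0^A(k,k)=k$'' uses this silently.
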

\begin{proof}
Let $F$ be a cofibrant $A^e$-module and let $M$ be a left $A$-module and $N$ be a right $A$-module. Hence $N \otimes_k M$ is a right $A^e$-module. It is easy to see there is an isomorphism
\[ (N \otimes_k M) \otimes_{A^e} F \cong N\otimes_A F \otimes_A M .\]
Now let $P$ be a cofibrant replacement of $k$ as a left $A$-module and let $Q$ be a cofibrant replacement of $k$ as a right $A$-module. Then $P \otimes_k Q$ is a cofibrant replacement of $k$ as an $A^e$-module and
\[ k \otimes_{A^e} P\otimes_k Q \cong (k \otimes_k k) \otimes_{A^e} P\otimes_k Q \cong (k\otimes_A P) \otimes_k (Q \otimes_A k) . \]
We conclude that $\tor_*^{A^e}(k,k) \cong \tor_*^{A}(k,k) \otimes_k \tor_*^{A}(k,k)$ (the tensor product of graded vector spaces). In particular $\tor_0^{A^e}(k,k)=k$ if and only if $\tor_0^{A}(k,k)=k$. For any coconnective augmented dga $B$ there is an isomorphism $\ext^*_{B}(k,k) \cong \hom_k(\tor_{-*}^{B}(k,k),k)$, which completes the proof.
\end{proof}

\begin{lemma}
\label{lem: Simple connectedness}
For every $n$ the morphism $\delta^n:A^n \to A^{n+1}$ is isomorphic to $G(\epsilon^n)$ for some morphism $\epsilon^n$ in $\Derived(k)$. The morphism $\epsilon^n$ is itself isomorphic to the $n$'th differential of $A$.
\end{lemma}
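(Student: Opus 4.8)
The plan is to identify $\delta^n$ concretely at the chain level, show that this concrete map survives into $\Derived(A^e)$ in a way visible through $G$, and then recognise it. First I would compute $\delta^n=(\Sigma\theta)\kappa$. Since $\theta$ is an honest map of $A$-bimodules, $\kappa$ is nothing but the connecting morphism of the short exact sequence $0\to J(n+1)\xrightarrow{\iota}J(n)\xrightarrow{\theta}\Sigma^{-n}A^n\to 0$; picking a splitting of $\theta$ by graded bimodules and applying the standard formula for the connecting morphism, one sees directly that $\delta^n\colon\Sigma^{-n}A^n\to\Sigma^{-n}A^{n+1}$ is represented by the map sending $x\in A^n$ to $\pm d^A(x)\in A^{n+1}$; that is, up to the sign fixed by the conventions of Section~\ref{sub: Signs}, $\delta^n$ is the $n$-th differential $d^n$ of $A$, viewed as a map between the quotient bimodules $J(n)/J(n+1)$ and $J(n+1)/J(n+2)$. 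Next, by the cited result of Dwyer--Greenlees--Iyengar---which applies because $A^e$ is simply connected by the previous lemma---each bimodule $\Sigma^{-n}A^n$ is isomorphic in $\Derived(A^e)$ to $G(V_n)$ for a $k$-module $V_n$; fix such isomorphisms $\varphi_n$, and note that, since $A^n$ has zero differential, $\varphi_n$ underlies a quasi-isomorphism of $k$-modules, so $V_n\simeq A^n$ in $\Derived(k)$.

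The heart of the proof is then to show $\delta^n$ lies in the image of $G$. By the adjunction $F\dashv G$, the morphism $\delta^n$, transported along $\varphi_n,\varphi_{n+1}$, is an element of $\Hom_{\Derived(A^e)}(G(V_n),\Sigma G(V_{n+1}))\cong\Hom_{\Derived(k)}(FG(V_n),\Sigma V_{n+1})$, and under this identification the image of $G$ consists exactly of those morphisms factoring through the counit $\varepsilon\colon FG(V_n)\to V_n$. Now $FG(V_n)=k\otimes^{\mathbf{L}}_{A^e}V_n$ is equivalent to $\tor^{A^e}_*(k,k)\otimes_k V_n$ because the augmentation ideal acts trivially on $V_n$; by coconnectivity and simple connectedness of $A^e$ the graded vector space $\tor^{A^e}_*(k,k)$ equals $k$ in degree $0$ and vanishes strictly above it, so $FG(V_n)$ has top homology $V_n$ with $\varepsilon$ inducing the identity there, and its remaining homology concentrated in strictly lower degrees. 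Since $V_{n+1}$ is discrete, a dimension count over the field $k$ shows that every morphism $FG(V_n)\to\Sigma V_{n+1}$ in the single degree where $\delta^n$ lives automatically factors through $\varepsilon$. Hence $\delta^n=G(\epsilon^n)$ for a unique $\epsilon^n\in\Hom_{\Derived(k)}(V_n,\Sigma V_{n+1})$, and transporting the chain-level description back along $\varphi_n,\varphi_{n+1}$ identifies $\epsilon^n$, up to isomorphism in $\Derived(k)$, with the differential $d^n\colon A^n\to A^{n+1}$.

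The main obstacle is precisely this middle step: a priori $\Hom_{\Derived(A^e)}(G(V_n),\Sigma G(V_{n+1}))$ could be strictly larger than the image of $\Hom_{\Derived(k)}(V_n,\Sigma V_{n+1})$ under $G$, and ruling out such extra, ``non-geometric'' morphisms is exactly where simple connectedness is indispensable---it is what forces $\tor^{A^e}_{>0}(k,k)=0$ in total degree $0$. The fiddly part of executing this is the grading bookkeeping that matches the internal degree of $\tor^{A^e}_*(k,k)$ against the single grading carried by the discrete bimodules $\Sigma^{-n}A^n$, so that the relevant $\Hom$ computed in $\Derived(k)$ really does collapse onto $\Hom_k(A^n,A^{n+1})$.
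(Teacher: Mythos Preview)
Your proof is correct and rests on the same key ingredient as the paper's---that simple connectedness of $A^e$ gives $\tor_0^{A^e}(k,k)=k$---but is packaged dually. The paper applies $F$ to $\delta^n$, projects to the $H_0$-summand $V^n\subset F(A^n)$ (using that $\Derived(k)$ is semisimple) to \emph{define} $\epsilon^n$, and then checks that the composite $A^n\xrightarrow{\eta}GF(A^n)\to G(V^n)$ is an isomorphism; naturality of the unit $\eta$ then exhibits $\delta^n\cong G(\epsilon^n)$. You instead argue via the counit that $G$ is fully faithful on discrete modules, so $\delta^n$ is automatically $G$ of something, and your opening chain-level identification of the underlying $k$-map of $\delta^n$ with $d^n$ then pins down $\epsilon^n$---a point the paper's proof leaves implicit.

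One remark: the claim that $\tor^{A^e}_*(k,k)$ vanishes strictly above degree~$0$ is unnecessary and you have not justified it. Since the target $V_{n+1}$ is concentrated in a single degree, $\Hom_{\Derived(k)}(FG(V_n),V_{n+1})$ already depends only on a single homology group of $FG(V_n)$, and the factorization through the counit follows immediately once $\varepsilon_{V_n}$ is an isomorphism on that homology group---which is precisely $\tor_0^{A^e}(k,k)=k$. Your ``dimension count'' collapses to this without any control on the higher $\tor$ groups.
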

\begin{proof}
The left adjoint $F:\Derived(A^e) \to \Derived(k)$ is given by $k \otimes^\mathbf{L}_{A^e} -$. Since $A^n$ is isomorphic to a direct sum of copies of $k$ in $\Derived(A^e)$ we see that
\[H^0(F(A^n)) \cong \tor_0^{A^e}(k,A^n) \cong A^n .\]
Note that this is only an isomorphism of $k$-vector spaces. To avoid confusion let $V^n$ be a $k$-vector space isomorphic to $H^0(F(A^n))$. Then $V^n$ is a direct summand of $F(A^n)$. Moreover, it is easy to see that the composition $A^n \to GF(A^n) \to G(V^n)$ is an isomorphism in $\Derived(A^e)$.

The morphism $\delta^n$ induces $F(\delta^n):F(A^n) \to F(A^{n+1})$. Utilizing the fact that every object in $\Derived(k)$ is isomorphic to a direct sum of its homology groups one gets a morphism $\epsilon^n:V^n \to V^{n+1}$ which commutes with $F(\delta^n)$. Applying the functor $G$ yields a commutative diagram
\[ \xymatrixcompile{
{A^n} \ar[d]^{\delta^n} \ar[r] & {GF(A^n)} \ar[d]^{GF\delta^n} \ar[r] & {G(V^n)} \ar[d]^{G\epsilon^n} \\
{A^{n+1}} \ar[r] & {GF(A^{n+1})} \ar[r] & {G(V^{n+1})} \\
}\]
which completes the proof.
\end{proof}

\subsection{The $E^2$-term of the spectral sequence}
We start with the general case.

\begin{lemma}
\label{lem: The general E2-term}
Let $A$ be a simply connected $k$-dga. The $E^2$-term of the spectral sequence is
\[ E^2_{p,q} \cong \ext_{A^e}^{-q}(A,H^{-p}(A)) . \]
If $A$ has bounded homology then the spectral sequence has strong convergence.
\end{lemma}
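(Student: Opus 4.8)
The plan is to identify the complex $(E^1_{*,q},d^1)$ with $\ext^{-q}_{A^e}(A,-)$ applied to the underlying cochain complex of $A$, and then to exploit that this cochain complex is split because $k$ is a field. First I would make the differential $d^1$ explicit. By the usual description of the differential attached to an exact couple, $d^1=\theta\kappa\colon E^1_{p,q}\to E^1_{p-1,q}$ is obtained by applying $\ext^{*}_{A^e}(A,-)$ to the composite $\theta\kappa\colon \A^{-p}\to\Sigma\A^{-p+1}$ in $\Derived(A^e)$. Writing $n=-p$ and using the identification $E^1_{p,q}=\ext^{-q}_{A^e}(A,A^{-p})$, this says that $(E^1_{*,q},d^1)$ is, up to an overall sign that is immaterial for homology, the complex obtained by applying $\ext^{-q}_{A^e}(A,-)$ to
\[ \cdots\to A^{n-1}\xrightarrow{\ \delta^{n-1}\ }A^{n}\xrightarrow{\ \delta^{n}\ }A^{n+1}\to\cdots \]
in $\Derived(A^e)$, where $A^n$ denotes the trivial $A$-bimodule whose underlying space is the degree-$n$ part of $A$, placed in cohomological degree $0$, and $\delta^n=\theta\kappa$ as in Section~\ref{sec: Construction of the spectral sequence}. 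One should note that this is a genuine complex, i.e. $\delta^{n+1}\delta^{n}=0$, which follows from the relation $\kappa\theta=0$ in the long exact sequences induced by the short exact sequences~(\ref{equ: triangle for J(n)}).

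Next I would invoke the standing hypothesis via Lemma~\ref{lem: Simple connectedness}. The commutative ladders produced in its proof assemble into an isomorphism of complexes in $\Derived(A^e)$ between $(\{A^n\},\{\delta^n\})$ and $G$ applied to a complex $(\{V^n\},\{\epsilon^n\})$ of objects of $\Derived(k)$; moreover that complex is isomorphic in $\Derived(k)$ to the underlying cochain complex $(A^\bullet,d_A)$ of $k$-vector spaces. Since $k$ is a field, $(A^\bullet,d_A)$ splits: choosing complements one obtains an isomorphism $(A^\bullet,d_A)\cong H^\bullet(A)\oplus C^\bullet$, where $H^\bullet(A)$ carries the zero differential and $C^\bullet$ is contractible (a sum of elementary complexes of the form $0\to W\xrightarrow{\mathrm{id}}W\to 0$). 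Applying $G$ and then $\ext^{-q}_{A^e}(A,-)$ — equivalently, applying the adjunction isomorphism $\ext^{-q}_{A^e}(A,G(-))\cong\Hom_{\Derived(k)}(\Sigma^{q}F(A),-)$ and splitting already in $\Derived(k)$ — one uses that both functors are additive and hence preserve the contracting homotopy of $C^\bullet$: the complex $(E^1_{*,q},d^1)$ becomes the direct sum of a contractible complex and a complex with zero differential whose term in cohomological degree $n$ is $\ext^{-q}_{A^e}(A,H^{n}(A))$. Taking homology at $n=-p$ yields $E^2_{p,q}\cong\ext^{-q}_{A^e}(A,H^{-p}(A))$, the bimodule $H^{-p}(A)$ being trivial and concentrated in degree $0$, as claimed.

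For the convergence statement, suppose $A$ has bounded homology. Then the formula just proved shows $E^2_{p,q}$ vanishes for all but finitely many values of $p$, so the spectral sequence is concentrated in finitely many columns. As $d^r$ shifts the column index by $r$, it vanishes once $r$ exceeds the width of this support, so $E^r_{p,q}$ is independent of $r$ for large $r$; consequently the derived $E^\infty$-term in the sense of \cite{BoardmanCCSpecSeq} vanishes. Together with the conditional convergence established in Lemma~\ref{lem: Conditional convergence}, this gives strong convergence.

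The main obstacle is the bookkeeping in the first two steps: upgrading the pointwise conclusion of Lemma~\ref{lem: Simple connectedness} to an isomorphism of complexes in $\Derived(A^e)$, and correctly tracking the suspensions $\A^n=\Sigma^{-n}A^n$ together with the sign of $d^1$ in the exact couple. Once the problem is reduced to splitting a cochain complex of vector spaces over a field, the rest is formal.
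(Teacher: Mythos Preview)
Your proposal is correct and follows essentially the same route as the paper: invoke Lemma~\ref{lem: Simple connectedness} to realize the sequence $(\{A^n\},\{\delta^n\})$ as $G$ of a complex of $k$-vector spaces isomorphic to $(A^\bullet,d_A)$, split that complex over the field, and apply the additive functor $\ext^{-q}_{A^e}(A,-)$. The paper phrases the splitting as an explicit direct-sum decomposition $V^n\cong \epsilon(V^{n-1})\oplus H^n(A)\oplus \epsilon(V^n)$ rather than via a contracting homotopy, and the convergence argument is identical.
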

\begin{proof}
>From Lemma~\ref{lem: Simple connectedness} we see that the morphism $\delta^n:A^n \to A^{n+1}$ is equal to $G(\epsilon^n)$ for some morphism $\epsilon^n:V^n \to V^{n+1}$ in $\Derived(k)$, where $V^i$ is isomorphic to $A^i$ as vector spaces. One can therefore decompose $V^0$ as $H^0(A)\oplus \epsilon(V^0)$ and continuing inductively write $V^n$ as $\epsilon(V^{n-1})\oplus H^n(A) \oplus \epsilon(V^n)$. Applying the functor $G$ one sees that $A^n$ is isomorphic to $G\epsilon(V^{n-1})\oplus H^n(A) \oplus G\epsilon(V^n)$ and that these isomorphisms are compatible with the morphisms $\delta^n$.

Recall that the $E^1$-term of the spectral sequence is the cochain-complex of graded groups $E^1_{p,*} = \ext^{-q}_{A^e}(A,A^{-p})$ with the differential induced by $\delta$. The decompositions of $A^n$ imply that the homology of this cochain-complex is isomorphic to
\[ \ext_{A^e}^{-q}(A,H^{-p}(A)) , \]
which is therefore the $E^2$-term.

Assuming that $H^*(A)$ is bounded implies that the $E^2_{p,q}=0$ whenever $p>0$ or $p<n$ for some fixed $n$. Hence the spectral sequence collapses at some finite stage $r$. By the remark following \cite[Theorem 7.1]{BoardmanCCSpecSeq} we see that the spectral sequence has strong convergence
\end{proof}

\begin{lemma}
\label{lem: Identifying the E2-term}
Suppose $A$ is simply connected of finite type, then there is an isomorphism of graded algebras
\[ E^2_{p,q} \cong H^{-p}(A)\otimes_k \ext^{-q}_A(k,k) . \]
\end{lemma}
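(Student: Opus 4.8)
The plan is to deduce the isomorphism additively from Lemma~\ref{lem: The general E2-term} and the finite-type hypothesis, and then to trace the multiplication of the spectral sequence through that identification. First I would recall from Lemma~\ref{lem: The general E2-term} that, $A$ being simply connected, $E^2_{p,q}\cong \ext^{-q}_{A^e}(A,H^{-p}(A))$, where $H^{-p}(A)$ is given the trivial $A^e$-module structure via the augmentation $A^e\to k$. When $A$ is of finite type $H^{-p}(A)$ is a finite-dimensional $k$-vector space, hence isomorphic as an $A^e$-module to a finite direct sum of copies of $k$; choosing a basis gives a natural isomorphism $\ext^{-q}_{A^e}(A,H^{-p}(A))\cong H^{-p}(A)\otimes_k \ext^{-q}_{A^e}(A,k)$, and composing with the standard ring isomorphism $\ext^*_{A^e}(A,k)\cong \ext^*_A(k,k)$ yields the underlying additive isomorphism of the statement. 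The gradings match, since $H^{-p}(A)$ sits in cohomological degree $-p$ and $\ext^{-q}_A(k,k)$ in degree $-q$, and the total degree $-p-q$ is that of the abutment.

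Next I would handle multiplicativity. By Proposition~\ref{pro: Multiplicativity of the HH spectral sequence} the product on $E^2$ is the one induced by the pairing $\bar{\mu}$ on $E^1$ built from the maps $\mu_{n,m}:\A^n\ba\A^m\to\A^{n+m}$ of Definition~\ref{def: The pairing mu}. I would unwind $\bar{\mu}=\mu_{n,m}(-\ba-)e^{-1}$ on the chain level: after choosing a basis $\{a_i\}$ of $A^n$, a representative of a class in $\ext^*_{A^e}(A,\A^n)$ can be written as $\sum_i a_i\otimes\tilde{x}_i$ with $\tilde{x}_i$ representing classes in $\ext^*_{A^e}(A,k)$. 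Using that $\mu_{n,m}$ is induced by the multiplication $A^n\otimes_k A^m\to A^{n+m}$ on $0$-simplices — and its compatibility with the map $\mu:k\ba k\to k$, which is precisely projection onto the $0$-simplices (compare the discussion preceding Lemma~\ref{lem: The induced pairing}) — one sees that on each summand $k\ba k\subset \A^n\ba\A^m$ the map $\mu_{n,m}$ restricts to $\mu$ followed by the inclusion of the summand of $\A^{n+m}$ dictated by $\mathrm{mult}_A$. Bilinearity of $\ba$ then gives, up to the standard Koszul signs fixed by Lemma~\ref{lem: Signs}, the identity $\bar{\mu}_{n,m}=\mathrm{mult}_A\otimes m$, where $\mathrm{mult}_A:A^n\otimes_k A^m\to A^{n+m}$ is the product of $A$ and $m$ is the pairing on $\ext^*_{A^e}(A,k)$ from Section~\ref{sec: The Hochschild cohomology shearing map}.

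Finally I would pass to the $E^2$-page. By Lemma~\ref{lem: Simple connectedness} the differential $d^1=\ext^*_{A^e}(A,\delta^\bullet)$ is, under the identification above, $d_A\otimes 1$, where $d_A$ is the differential of $A$. Since $A$ is a dga, $\mathrm{mult}_A$ satisfies the Leibniz rule with respect to $d_A$, so $\bar{\mu}=\mathrm{mult}_A\otimes m$ descends to homology and induces on $E^2_{p,q}\cong H^{-p}(A)\otimes_k\ext^{-q}_{A^e}(A,k)$ the product (cup product of $A$)$\,\otimes\,m$. By Lemma~\ref{lem: The induced pairing}, $m$ corresponds under $\ext^*_{A^e}(A,k)\cong\ext^*_A(k,k)$ to the Yoneda product, so the product on $E^2$ is exactly the obvious one on $H^{-p}(A)\otimes_k\ext^{-q}_A(k,k)$, as claimed.

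The main obstacle is the chain-level identification $\bar{\mu}_{n,m}=\mathrm{mult}_A\otimes m$: since $\mu_{n,m}$ is defined through the bar construction and realization, one has to check carefully that, after choosing bases, it genuinely decouples into the $A$-multiplication on the "coefficient" vector spaces and the pairing $\mu$ on the $k\ba k$ factors, and getting the signs right requires invoking the conventions of Lemma~\ref{lem: Signs}. Everything else is formal bookkeeping with finite-dimensional trivial bimodules.
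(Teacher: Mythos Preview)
Your overall strategy matches the paper's: obtain the additive identification from Lemma~\ref{lem: The general E2-term} and finite-dimensionality of $H^{-p}(A)$, and verify multiplicativity by showing that the pairing $\bar\mu$ factors as $\mathrm{mult}_A$ on the coefficient side and the pairing $m$ of Section~\ref{sec: The Hochschild cohomology shearing map} (hence the Yoneda product, by Lemma~\ref{lem: The induced pairing}) on the $\ext^*_{A^e}(A,k)$ side.

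There is one genuine gap in your treatment of the $E^1$-page. You assert that, after choosing a basis of $A^n$, every class in $\ext^*_{A^e}(A,\A^n)$ can be written as a finite sum $\sum_i a_i\otimes\tilde x_i$. This is equivalent to saying the natural map
\[
A^n\otimes_k\ext^*_{A^e}(A,k)\ \longrightarrow\ \ext^*_{A^e}(A,A^n)
\]
is surjective, and for that you need $A^n$ to be finite-dimensional. But the finite-type hypothesis only guarantees that $H^n(A)$ is finite-dimensional; nothing is assumed about the individual $A^n$. The paper avoids this by working the other way around: it defines a map of bigraded algebras $\phi:A^{-p}\otimes_k\ext^{-q}_A(k,k)\to E^1_{p,q}$ without claiming it is an isomorphism on $E^1$, checks that $\phi$ is multiplicative by the explicit computation you outline (carried out only for elements in the image of $\phi$), and then shows separately that $H(\phi)$ is an isomorphism onto $E^2$---this last step is where finite-dimensionality of $H^n(A)$, rather than of $A^n$, is used. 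Your argument is easily repaired in the same way: compute $\bar\mu$ only on classes of the form $a\otimes\tilde x$, and then note that such classes already generate $E^2$.
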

\begin{proof}
Let $B$ be the differential bigraded algebra given by
\[ B_{p,q} = A^{-p} \otimes_k \ext^{-q}_A(k,k) \ \text{ and } \ d^B(a \otimes f) = d^A(a) \otimes f \]
where $d^A$ is the given differential on $A$. We will show there exists a morphism of differential bigraded algebras $\phi:B \to E^1$ which induces an isomorphism $H_*(\phi):H_*(B) \to H_*(E^1)=E^2$.

Composition of morphisms yields natural morphisms
\[ \ext_{A^e}^0(k,A^n) \otimes_k \ext_{A^e}^i(A,k) \to \ext_{A^e}^i(A,A^n) . \]
There is an obvious monomorphism $A^n \to \ext_k^0(k,A^n) \to \ext_{A^e}^0(k,A^n)$. Note that since $A^e$ is simply connected, the morphism $\ext_k^0(k,A^n) \to \ext_{A^e}^0(k,A^n)$ is in fact an isomorphism, because:
\[\ext_{A^e}^0(k,A^n) \cong \ext_{A^e}^0(k,G(V^n)) \cong \ext^0_{k}(k \ba k, V^n) \cong \ext^0_k(k,V^n) . \]
In this way we get morphisms
\[ \lambda_{p,q}:A^{-p} \otimes_k \ext_{A^e}^{-q}(A,k) \to \ext_{A^e}^{-q}(A,A^{-p}) . \]
Recall that there is an isomorphism $\ext_A^i(k,k) \cong \ext_{A^e}^i(A,k)$. Thus we have a morphism of graded vector spaces:
\[\phi_{p,q}: A^{-p} \otimes_k \ext^{-q}_A(k,k) \to E^1_{p,q}=\ext^{-q}_{A^e}(A,A^{-p}) .\]
Since the differential on $E^1$ is simply $\ext_{A^e}^*(A,d^A)$ we see that $\phi$ yields a morphism of differential bigraded vector spaces $\phi:B \to E^1$.

Next we show that $\phi$ is a quasi-isomorphism. From Lemma~\ref{lem: The general E2-term} we see that $H\phi$ is the morphism
\[ H^n(A) \otimes \ext^m_{A}(k,k) \to \ext^m_{A^e}(A,H^n(A)) .\]
Since $H^n(A)$ is isomorphic in $\Derived(A^e)$ to a finite coproduct of copies of $k$, this morphism is an isomorphism.

It remains to show that $\phi$ is a morphism of graded algebras. Let $\tilde{k}$ be a cofibrant replacement of $k$ over $A$. Suppose given elements $x \in A^{-t}$, $u \in A^{-s}$ and maps $y:\tilde{k} \to \Sigma^n k$ and $v:\tilde{k} \to \Sigma^m k$. We shall now compute the product $\phi(x\otimes y) \cdot \phi(u\otimes v)$.

Choose maps $x:k \to A^{-t}$ and $u: k \to A^{-s}$ representing the elements $x$ and $y$. Recall that $\tilde{A}$ is a cofibrant replacement for $A$ over $A^e$ and let $a:\tilde{A} \to \tilde{k}$ be a map equivalent to the augmentation map $\aug:A \to k$. From the definition $\phi$ and Corollary~\ref{cor: The adjunction} one sees that $\phi(x\otimes y)$ is represented by the composition $xya$. The product $\phi(x\otimes y) \cdot \phi(u\otimes v)$ is then represented by the composition
\[ (*) \quad \xymatrixcompile{
{\tilde{A}} \ar[r]^-{\sim} & {\tilde{A} \ba \tilde{A}} \ar[r]^-{ay\ba av} & {\Sigma^n k\ba \Sigma^m k} \ar[r]^-{x \ba u} & {\Sigma^n A^{-t} \ba \Sigma^m A^{-s} } \ar[r]^-{\mu_{s,t}} & {\Sigma^{n+m} A^{-s-t}} . } \]
Let $w:k \to A^{-s-t}$ be a map representing the element $xu \in A^{-s-t}$, i.e. $w(1)=xu$. We leave it to the reader to ascertain that the composition $(*)$ above is equal to the composition
\[\xymatrixcompile{
{\tilde{A}} \ar[r]^-{\sim} & {\tilde{A} \ba \tilde{A}} \ar[r]^-{ay\ba av} & {\Sigma^n k\ba \Sigma^m k} \ar[r]^-{\mu} & {\Sigma ^{n+m} k} \ar[r]^-{w} & {\Sigma^{n+m} A^{-s-t}} . } \]

>From Lemma~\ref{lem: The induced pairing} we see that, under the isomorphism $\ext^*_{A^e}(A,k) \cong \ext^*_A(k,k)$, the composition
\[\xymatrixcompile{
{\tilde{A}} \ar[r]^-{\sim} & {\tilde{A} \ba \tilde{A}} \ar[r]^-{ay\ba av} & {\Sigma^n k\ba \Sigma^m k} \ar[r]^-{\mu} & {\Sigma ^{n+m} k} } \]
corresponds to the composition $y \circ v \in \ext^*_A(k,k)$. We conclude that
\[ \phi^{-1} (\phi(x\otimes y) \cdot \phi(u\otimes v)) = xu \otimes (y\circ v) . \]
\end{proof}

\begin{lemma}
\label{lem: Infinite cycles}
Suppose $A$ is simply connected. Then under the isomorphism $E^2_{0,*} \cong \ext^*_A(k,k)$, the infinite cycles in $E^2_{0,*}$ can be identified with the image of the shearing map $\shr:HH^*(A) \to \ext^*_A(k,k)$.
\end{lemma}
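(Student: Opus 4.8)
The plan is to reduce the statement to the assertion that the edge homomorphism of the spectral sequence coincides with the shearing map, after which the equality of images is automatic.

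First I would record the shape of the spectral sequence near bidegree $(0,*)$. Since $A^{-p}=0$ for $p>0$ we have $E^1_{p,*}=0$ for $p>0$, so no differential $d^r$ maps into bidegree $(0,*)$; hence $E^\infty_{0,*}$ is exactly the subgroup of infinite cycles in $E^2_{0,*}$. Also $E^1_{1,*}=\ext^*_{A^e}(A,A^{-1})=0$, so $E^2_{0,*}$ is literally the subgroup $\ker\bigl(d^1_{0,*}\bigr)=\ker\bigl(\delta^0_*\colon\ext^*_{A^e}(A,A^0)\to\ext^*_{A^e}(A,A^1)\bigr)$ of $E^1_{0,*}$. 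The first leg $D^1_{0,*}=HH^*(A)\xrightarrow{\theta}E^1_{0,*}$ of the exact couple satisfies $\kappa\theta=0$, so the usual induction on $r$ shows $\theta_*(x)$ is a permanent cycle for every $x$; thus $\theta_*$ takes values in $\ker d^1_{0,*}=E^2_{0,*}$ and in fact in $E^\infty_{0,*}$. Moreover, since $J(-p)=A$ and $\iota=\mathrm{id}$ for all $p\ge0$ (as in the proof of Lemma~\ref{lem: Conditional convergence}), the filtration of $HH^*(A)$ defining the spectral sequence has $F_0=HH^*(A)$, and the standard identification $F_0/F_{-1}\cong E^\infty_{0,*}$ of exact-couple theory is the one induced by $\theta$. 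Hence $\theta_*\colon HH^*(A)\to E^\infty_{0,*}$ is the surjective quotient map, and the infinite cycles in $E^2_{0,*}$ are exactly $\theta_*\bigl(HH^*(A)\bigr)$. It therefore suffices to identify $\theta_*$, followed by $E^2_{0,*}\cong\ext^*_A(k,k)$, with $\shr$.

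For this I would use the description of the shearing map from Section~\ref{sec: The Hochschild cohomology shearing map}: after the isomorphism $\ext^*_{A^e}(A,k)\cong\ext^*_A(k,k)$ one has $\shr=\ext^*_{A^e}(A,\aug)$, and the augmentation factors as $A=J(0)\xrightarrow{\theta}\A^0=A^0\xrightarrow{\aug|_{A^0}}k$, since $\aug$ vanishes in positive degrees and is $\aug|_{A^0}$ in degree $0$. Thus $\shr$ is $\theta_*$ followed by $(\aug|_{A^0})_*\colon\ext^*_{A^e}(A,A^0)\to\ext^*_{A^e}(A,k)$ and the standard isomorphism. On the other hand, by the proof of Lemma~\ref{lem: The general E2-term} the isomorphism $E^2_{0,*}=\ker\delta^0_*\xrightarrow{\cong}\ext^*_{A^e}(A,H^0(A))=\ext^*_{A^e}(A,k)$ is induced by the morphism $q^0\colon A^0\to H^0(A)=k$ in $\Derived(A^e)$ that projects off the summand of $A^0$ on which $\delta^0$ is a split monomorphism. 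So the lemma comes down to checking that $(\aug|_{A^0})_*$ and $q^0_*$ agree on $\ker\delta^0_*$. Every element of $\ker\delta^0_*$ lies in the image of the inclusion $\ext^*_{A^e}(A,H^0(A))\hookrightarrow\ext^*_{A^e}(A,A^0)$ of that summand, so it is enough that $\aug|_{A^0}$ and $q^0$ restrict to the same morphism $H^0(A)\to k$ on the $H^0(A)$-summand; both equal the canonical identification $H^0(A)=k$ — $q^0$ by construction of the decomposition, and $\aug|_{A^0}$ because $H^0(\aug)$ is that identification.

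Combining these, $\mathrm{im}\,\shr$ is the image of $\theta_*\bigl(HH^*(A)\bigr)=E^\infty_{0,*}$ under $E^2_{0,*}\cong\ext^*_A(k,k)$, which is exactly the statement (and both sides are subrings, since the spectral sequence is multiplicative and $\shr$ is a ring map). The one genuinely delicate point is the comparison in the previous paragraph: matching the abstractly defined splitting of $A^0$ in $\Derived(A^e)$ used in Lemma~\ref{lem: The general E2-term} with the honest chain map $\aug|_{A^0}\colon A^0\to k$, checking that both pick out the $H^0(A)$-summand and the identification $H^0(A)=k$ consistently. If this bookkeeping becomes unwieldy, I would instead prove once and for all that the edge homomorphism of any multiplicative exact couple of the present form equals $\ext^*_{A^e}(A,-)$ applied to the canonical quotient $A\to H^0(A)$, which for simply connected $A$ is the augmentation, thereby avoiding any re-examination of the proof of Lemma~\ref{lem: The general E2-term}.
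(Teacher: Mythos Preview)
Your proof is correct, and its content is the same as the paper's---identifying the edge homomorphism at $p=0$ with the shearing map---but the execution is different. The paper does not factor $\aug$ through $\theta$ and chase the decomposition from Lemma~\ref{lem: The general E2-term}; instead it introduces a second, trivially collapsing spectral sequence $E'$ arising from the filtration $I(0)=k$, $I(p)=0$ for $p>0$, and uses that the augmentation $\aug:A\to k$ is a map of filtered $A^e$-modules. The induced map of spectral sequences is then observed to be ``the identity'' on $E^2_{0,*}\to E'^2_{0,*}$ and zero elsewhere, so the infinite cycles in $E^2_{0,*}$ are exactly the image of $\aug_*=\shr$. This comparison-of-filtrations argument is cleaner and avoids reopening the proof of Lemma~\ref{lem: The general E2-term}; on the other hand, the paper's assertion that the comparison map is ``the identity'' on $E^2_{0,*}$ is precisely the compatibility check you carry out explicitly (that $(\aug|_{A^0})_*$ and the projection $q^0_*$ agree on $\ker\delta^0_*$), so in a sense you have justified a step the paper leaves to the reader. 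Your suggested fallback at the end---showing abstractly that the edge map is $\ext^*_{A^e}(A,A\to H^0(A))$---is essentially the paper's strategy in different clothing. Both arguments rely on the surjectivity of $\theta_*:HH^*(A)\to E^\infty_{0,*}$, which you invoke via the identification $F_0/F_{-1}\cong E^\infty_{0,*}$; the paper is equally tacit about this point.
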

\begin{proof}
Consider the trivial filtration on $k$ given by $I(0)=k$ and $I(p)=0$ for $p>0$. As the filtration of $A$ gave a spectral sequence for $\ext_{A^e}(A,A)$ in Section~\ref{sec: Construction of the spectral sequence}, so does this filtration of $k$ yield a spectral sequence $E'^r_{p,q}$ for $\ext_{A^e}^*(A,k)$, one which collapses at the $E'^1$ stage. Indeed  $E'^1_{0,*} = \ext_{A^e}^*(A,k)$ and is zero otherwise.

The augmentation map $\aug:A \to k$ is then a map of filtered $A$-bimodules, and so induces a map of the aforementioned spectral sequences. This map is easily described on the $E^2$-term: it is the identity map $E^2_{0,*} \to E'^2_{0,*}$ and zero elsewhere. It follows that the infinite cycles in $E^2_{0,*}$ are the image of the map $\ext^*_{A^e}(A,A) \to \ext^*_{A^e}(A,k)$ induced by the augmentation $\aug:A \to k$. As we saw in~\ref{sub: Two descriptions of the shearing map} this indeed gives the image of the shearing map after composing with the adjunction isomorphism $\ext^*_{A^e}(A,k) \cong \ext^*_A(k,k)$.
\end{proof}

\bibliographystyle{amsplain}    
\bibliography{bib2010}          

\providecommand{\bysame}{\leavevmode\hbox to3em{\hrulefill}\thinspace}
\providecommand{\MR}{\relax\ifhmode\unskip\space\fi MR }
\providecommand{\MRhref}[2]{%
  \href{http://www.ams.org/mathscinet-getitem?mr=#1}{#2}
}
\providecommand{\href}[2]{#2}
\begin{thebibliography}{10}

\bibitem{BensonGreenleesShamir}
D.~J. Benson, J.~P.~C. Greenlees, and S.~Shamir, \emph{Complete intersections
  and mod $p$ cochains}, {\tt arXiv:1104.4244 [math.AT]}, April 2011.

\bibitem{BIKsupport}
Dave Benson, Srikanth~B. Iyengar, and Henning Krause, \emph{Local cohomology
  and support for triangulated categories}, Ann. Sci. \'Ec. Norm. Sup\'er. (4)
  \textbf{41} (2008), no.~4, 573--619. \MR{2489634 (2009k:18012)}

\bibitem{BIKstratifying}
\bysame, \emph{Stratifying triangulated categories}, J. Topol. \textbf{4}
  (2011), no.~3, 641--666. \MR{2832572}

\bibitem{BlumbergCohenTeleman}
Andrew~J. Blumberg, Ralph~L. Cohen, and Constantin Teleman, \emph{Open-closed
  field theories, string topology, and {H}ochschild homology}, Alpine
  perspectives on algebraic topology, Contemp. Math., vol. 504, Amer. Math.
  Soc., Providence, RI, 2009, pp.~53--76. \MR{2581905 (2011f:55020)}

\bibitem{BoardmanCCSpecSeq}
J.~Michael Boardman, \emph{Conditionally convergent spectral sequences},
  Homotopy invariant algebraic structures ({B}altimore, {MD}, 1998), Contemp.
  Math., vol. 239, Amer. Math. Soc., Providence, RI, 1999, pp.~49--84.
  \MR{MR1718076 (2000m:55024)}

\bibitem{CohenJonesStringTop}
Ralph~L. Cohen and John D.~S. Jones, \emph{A homotopy theoretic realization of
  string topology}, Math. Ann. \textbf{324} (2002), no.~4, 773--798.
  \MR{1942249 (2004c:55019)}

\bibitem{CohenJonesYan}
Ralph~L. Cohen, John D.~S. Jones, and Jun Yan, \emph{The loop homology algebra
  of spheres and projective spaces}, Categorical decomposition techniques in
  algebraic topology ({I}sle of {S}kye, 2001), Progr. Math., vol. 215,
  Birkh\"auser, Basel, 2004, pp.~77--92. \MR{MR2039760 (2005c:55016)}

\bibitem{DuggerMultiplicative1}
Daniel Dugger, \emph{Multiplicative structures on homotopy spectral sequences
  i}, {\tt arXiv:math/0305173v1 [math.AT]}, May 2003.

\bibitem{DuggerMultiplicative2}
\bysame, \emph{Multiplicative structures on homotopy spectral sequences ii},
  {\tt arXiv:math/0305187v1 [math.AT]}, May 2003.

\bibitem{DwyerGreenleesIyengar}
W.~G. Dwyer, J.~P.~C. Greenlees, and S.~Iyengar, \emph{Duality in algebra and
  topology}, Adv. Math. \textbf{200} (2006), no.~2, 357--402. \MR{MR2200850
  (2006k:55017)}

\bibitem{DwyerSpalinski}
W.~G. Dwyer and J.~Spali{\'n}ski, \emph{Homotopy theories and model
  categories}, Handbook of algebraic topology, North-Holland, Amsterdam, 1995,
  pp.~73--126. \MR{1361887 (96h:55014)}

\bibitem{DwyerNoncommutative}
William~G. Dwyer, \emph{Noncommutative localization in homotopy theory},
  Non-commutative localization in algebra and topology, London Math. Soc.
  Lecture Note Ser., vol. 330, Cambridge Univ. Press, Cambridge, 2006,
  pp.~24--39. \MR{MR2222480 (2007g:55010)}

\bibitem{FelixHalperinThomasHopfAlgebraElliptic}
Yves F{\'e}lix, Stephen Halperin, and Jean-Claude Thomas, \emph{Elliptic {H}opf
  algebras}, J. London Math. Soc. (2) \textbf{43} (1991), no.~3, 545--555.
  \MR{MR1113392 (92i:57033)}

\bibitem{FelixHalperinThomasEliiptic1}
\bysame, \emph{Elliptic spaces}, Bull. Amer. Math. Soc. (N.S.) \textbf{25}
  (1991), no.~1, 69--73. \MR{MR1085825 (92m:55009)}

\bibitem{FelixHalperinThomasDGA}
Yves F{\'e}lix, Steve Halperin, and Jean-Claude Thomas, \emph{Differential
  graded algebras in topology}, Handbook of algebraic topology, North-Holland,
  Amsterdam, 1995, pp.~829--865. \MR{1361901 (96j:57052)}

\bibitem{FelixMenichiThomas}
Yves F{\'e}lix, Luc Menichi, and Jean-Claude Thomas, \emph{Gerstenhaber duality
  in {H}ochschild cohomology}, J. Pure Appl. Algebra \textbf{199} (2005),
  no.~1-3, 43--59. \MR{2134291 (2006a:16017)}

\bibitem{FelixThomasVPHochschild}
Yves Felix, Jean-Claude Thomas, and Micheline Vigu{\'e}-Poirrier, \emph{The
  {H}ochschild cohomology of a closed manifold}, Publ. Math. Inst. Hautes
  \'Etudes Sci. (2004), no.~99, 235--252. \MR{2075886 (2005g:57054)}

\bibitem{GreenleesHessShamir}
J.~P.~C. Greenlees, K.~Hess, and S.~Shamir, \emph{Complete intersections in
  rational homotopy theory},  (2009), {\tt arXiv:0906.3247 [math.AT]}.

\bibitem{HoveyBook}
Mark Hovey, \emph{Model categories}, Mathematical Surveys and Monographs,
  vol.~63, American Mathematical Society, Providence, RI, 1999. \MR{1650134
  (99h:55031)}

\bibitem{MacLaneCategoriesBook}
Saunders Mac~Lane, \emph{Categories for the working mathematician}, second ed.,
  Graduate Texts in Mathematics, vol.~5, Springer-Verlag, New York, 1998.
  \MR{1712872 (2001j:18001)}

\bibitem{Massey}
W.~S. Massey, \emph{Products in exact couples}, Ann. of Math. (2) \textbf{59}
  (1954), 558--569. \MR{MR0060829 (15,735a)}

\bibitem{SchwedeShipleyAlgebrasAndModules}
Stefan Schwede and Brooke~E. Shipley, \emph{Algebras and modules in monoidal
  model categories}, Proc. London Math. Soc. (3) \textbf{80} (2000), no.~2,
  491--511. \MR{MR1734325 (2001c:18006)}

\bibitem{Solberg}
{\O}yvind Solberg, \emph{Support varieties for modules and complexes}, Trends
  in representation theory of algebras and related topics, Contemp. Math., vol.
  406, Amer. Math. Soc., Providence, RI, 2006, pp.~239--270. \MR{2258047
  (2007f:16018)}

\bibitem{Xu}
Fei Xu, \emph{Hochschild and ordinary cohomology rings of small categories},
  Adv. Math. \textbf{219} (2008), no.~6, 1872--1893. \MR{2455628 (2009h:18025)}

\end{thebibliography}

\end{document}